\tikzset{node distance=2cm, auto}
\setlist[description]{font=\normalfont\bfseries\:\!}
\renewcommand\labelenumi{(\alph{enumi})}
\renewcommand\theenumi\labelenumi
\numberwithin{equation}{section}
\newtheorem{theorem}{Theorem}[section]
\newtheorem{proposition}[theorem]{Proposition}
\newtheorem{lemma}[theorem]{Lemma}
\newtheorem{corollary}[theorem]{Corollary}
\theoremstyle{definition}
\newtheorem{definition}[theorem]{Definition}
\newtheorem{example}[theorem]{Example}
\newtheorem{remark}[theorem]{Remark}
\newenvironment{acknowledgements}{\vskip .5cm\noindent\small\textbf{Acknowledgements}.\quad\noindent}{}
\newcommand{\CategoryFont}[1]{\mathsf{\uppercase{#1}}}
\newcommand{\FunctorFont}[1]{\mathsf{#1}}
\renewcommand{\|}{\:|\:}
\renewcommand{\bar}[1]{\mkern 1.5mu\overline{\mkern-1.5mu#1\mkern-1.5mu}\mkern 1.5mu}
\newcommand{\x}{\otimes}
\renewcommand{\.}{\cdot}
\renewcommand{\,}{,\dots,}
\renewcommand{\=}{\:\colon\!=}
\renewcommand{\epsilon}{\varepsilon}
\renewcommand{\o}{\circ}
\renewcommand{\b}{\bullet}
\newcommand{\<}{\langle}
\renewcommand{\>}{\rangle}
\newcommand{\op}{\mathsf{op}}
\newcommand{\co}{\mathsf{co}}
\renewcommand{\*}{\ast}
\newcommand{\nto}{\nrightarrow}
\newcommand{\Inc}{\mathsf{Inc}}
\newcommand{\N}{\mathbb{N}}
\newcommand{\id}{\mathsf{id}}
\newcommand{\CC}{{\mathbf{K}}}
\newcommand{\DD}{{\mathbf{J}}}
\newcommand{\X}{\mathbb{X}}
\newcommand{\Y}{\mathbb{Y}}
\newcommand{\End}{\CategoryFont{End}}
\newcommand{\Cat}{\CategoryFont{Cat}}
\newcommand{\Mnd}{\CategoryFont{Mnd}}
\newcommand{\Fib}{\CategoryFont{Fib}}
\newcommand{\Indx}{\CategoryFont{Indx}}
\newcommand{\RestrCat}{\CategoryFont{RCat}}
\newcommand{\sRestrCat}{\CategoryFont{sRCat}}
\newcommand{\TngCat}{\CategoryFont{TngCat}}
\newcommand{\Weil}{\CategoryFont{Weil}_1}
\newcommand{\T}{\mathrm{T}}
\newcommand{\TT}{\mathbb{T}}
\newcommand{\Tng}{\CategoryFont{Tng}}
\newcommand{\Leung}{\mathfrak{L}}
\newcommand{\q}{\mathbf{q}}
\newcommand{\p}{\mathbf{p}}
\newcommand{\TngFib}{\CategoryFont{TngFib}}
\newcommand{\TngIndx}{\CategoryFont{TngIndx}}
\newcommand{\TngRestrCat}{\CategoryFont{TngRCat}}
\newcommand{\TngsRestrCat}{\CategoryFont{TngSRCat}}
\newcommand{\U}{\CategoryFont{U}}
\newcommand{\M}{\mathscr{M}}
\newcommand{\Univ}[1]{\mathcal{\uppercase{#1}}}
\renewcommand{\H}{\mathsf{H}}
\newcommand{\Split}{\FunctorFont{Split}}
\newcommand{\Insert}{\CategoryFont{Insert}}
\newcommand{\Equif}{\CategoryFont{Equif}}
\newcommand{\IND}{\mathscr{I}}
\newcommand{\I}{\mathbb{I}}
\newcommand{\E}{\mathbb{E}}
\newcommand{\VF}{\CategoryFont{VF}}
\title{The formal theory of tangentads\\ {\Large PART I}}
\date{}
\author{\uppercase{Marcello Lanfranchi}}
\affil{\normalsize\textit{Macquarie University, School of Mathematical and Physical Sciences}}
\begin{document}

\maketitle

\begin{abstract}\noindent
Tangent categories offer a categorical context for differential geometry, by categorifying geometric notions like the tangent bundle functor, vector fields, Euclidean spaces, vector bundles, connections, etc. In the last decade, the theory has been extended in new directions, providing concepts such as tangent monads, tangent fibrations, tangent restriction categories, reverse tangent categories and many more. It is natural to wonder how these new flavours interact with the geometric constructions offered by the theory. How does a tangent monad or a tangent fibration lift to the tangent category of vector fields of a tangent category? What is the correct notion of vector bundles for a tangent restriction category? We answer these questions by adopting the formal approach of tangentads. Introduced in our previous work, tangentads provide a unifying context for capturing the different flavours of the theory and for extending constructions like the Grothendieck construction or the equivalence between split restriction categories and $\M$-categories, to the tangent-categorical context. In this paper, we construct the formal notion of vector fields for tangentads, by isolating the correct universal property enjoyed by vector fields in ordinary tangent categories. We show that vector fields form a Lie algebra and a $2$-monad and show how to construct vector fields using PIE limits. Finally, we compute vector fields for some examples of tangentads. In a forthcoming paper, we extend the theory to other constructions: differential objects, differential bundles, and connections.
\end{abstract}

\begin{acknowledgements}
We are grateful to Steve Lack, who suggested an equivalent description of the universal properties of the formal constructions, which greatly improved the exposition and the clarity of this paper. We also thank Richard Garner, JS Lemay, and Michael Ching for useful discussions, corrections, and advice. Finally, we thank Quentin Schroeder and Luca Mesiti for useful discussions on PIE limits for fibrations. This material is based upon work supported by the AFOSR under award number FA9550-24-1-0008.
\end{acknowledgements}


\tableofcontents

\section{Introduction}
\label{section:introduction}
Differential geometry offers a long list of important geometric constructions such as vector fields, Euclidean spaces, vector bundles, principal bundles, connections, etc. A consistent component of tangent category theory aims to extend these constructions in the more general setting of tangent categories. Many efforts in this direction have been taken (see, for instance~\cite{cockett:tangent-cats},~\cite{cockett:differential-bundles},~\cite{cockett:connections}, or~\cite{cockett:differential-equations}), and more recently, some of these constructions have also been compared with related notions in algebraic geometry, adopting a tangent-categorical perspective (\cite{cruttwell:algebraic-geometry},~\cite{cruttwell:connections-algebraic-geometry}).
\par Interestingly, for each of these constructions, one assembles a new tangent category from a given one, so for instance, the vector fields of a tangent category form a new tangent category, and so do differential objects, connections, and so on.
\par Tangent category theory not only extends many constructions of differential geometry, but also comes in different categorical flavours. The first instance of this appeared in~\cite{cockett:differential-bundles} in which the notion of tangent fibrations, namely, fibrations between two tangent categories whose tangent bundle functors are compatible with the Cartesian lifts, was introduced. Similarly, the paper~\cite{cockett:tangent-monads} introduced the notion of tangent monads, which are monads over a tangent category with a distributive law between the monad and the tangent bundle functor, compatible with the tangent structure.
\par One is left to wonder how constructions like vector fields or differential bundles interact with the different flavours of tangent category theory. For instance, it is natural to ask whether or not a tangent monad on a tangent category $(\X,\TT)$ lifts to the tangent category of vector fields of $(\X,\TT)$, or if it does so on the tangent categories of differential bundles or of connections. Furthermore, one might like to ask the same question for other types of flavours, such as tangent fibrations or restriction tangent categories.
\par Our approach is rooted in the formal theory of tangentads, a notion firstly introduced in~\cite{lanfranchi:grothendieck-tangent-cats} and further studied in~\cite{lanfranchi:tangentads-I} as a formalization of tangent category theory where it was shown to capture a multitude of existing concepts, such as tangent monads, tangent fibrations, reverse tangent categories, strong display tangent categories, split restriction tangent categories, and infinitesimal objects.
\par To put it into a slogan, \textit{A tangentad is to a tangent category as a monad internal to a $2$-category is to a monad in ordinary category theory}.\\
This formal approach is a powerful tool to extend equivalences and constructions between $2$-categories into the tangent context. In particular, it was employed to prove the Grothendieck construction for tangent fibrations~\cite[Theorem~5.5]{lanfranchi:grothendieck-tangent-cats}, the construction of algebras for tangent monads~\cite[Theorem~4.4]{lanfranchi:tangentads-I}, and an equivalence between split restriction tangent categories and $\M$-tangent categories~\cite[Theorem~4.45]{lanfranchi:tangentads-I}.
\par The formal approach provides a unifying context to study this problem in a model-independent way. This saves one from defining similar constructions multiple times and from proving similar results in different contexts. For instance, it saves us from proving that a tangent monad sends the Lie bracket of two vector fields to another vector field. This is a direct consequence of the following two statements: (1) a tangent monad is an example of a tangentad in the $2$-category of monads; (2) the formal theory of vector fields for tangentads, developed in this paper, establishes a notion of Lie bracket for vector fields.
\par This paper, the first of a two-paper story, aims to develop a formal theory which generalizes the constructions of vector fields in the general context of tangentads. To begin, we isolate the correct universal property satisfied by this construction in the context of tangent categories and use this characterization to define vector fields in the general context of tangentads.
\par In particular, we show that the tangent categories of vector fields of a given tangent category come equipped with extra structure which makes a suitable $2$-functor from the $2$-category of tangent categories to itself, corepresentable. The $2$-functor which classifies vector fields on a tangent category $(\X,\TT)$ is the $2$-functor which sends a tangent category $(\X',\TT')$ to the tangent category $\VF[\X',\TT'|\X,\TT]$ of vector fields of the Hom-tangent category of tangent morphisms from $(\X',\TT')$ to $(\X,\TT)$. Therefore, we say, a tangentad in a $2$-category admits vector fields when the corresponding $2$-functor from the $2$-category of tangentads to the $2$-category of tangent categories is corepresentable in a suitable sense.
\par We extend many of the constructions and results of vector fields for ordinary tangent category theory to general tangentads. Here is a list of some of these results:
\begin{itemize}
\item Proposition~\ref{proposition:VF-functoriality} proves that the construction of vector fields is $2$-functorial;

\item Theorem~\ref{theorem:monad-structure-vector-fields} shows that the tangentad $\VF(\X,\TT)$ of vector fields of a tangentad $(\X,\TT)$ carries the structure of a commutative monoid;

\item Theorem~\ref{theorem:Lie-algebra-vector-fields} shows that the tangentad $\VF(\X,\TT)$ of vector fields of a tangentad $(\X,\TT)$ with negatives carries the structure of a Lie algebra;

\item Theorem~\ref{theorem:monad-structure-vector-fields} shows that the $2$-functor $\VF\colon\Tng(\CC)\to\Tng(\CC)$ is a $2$-monad.
\end{itemize}
After discussing in Section~\ref{section:PIE-limits} how to lift PIE limits to the $2$-category of tangentads internal to a given $2$-category, we employ PIE limits to construct vector fields.
Finally, we study the formal construction of vector fields for tangent monads, tangent fibrations, tangent indexed categories, tangent split restriction categories and discuss how to extend it to general tangent (non-necessarily split) restriction categories via suitable pullbacks.
\par In the follow-up paper, we will study the constructions of differential objects, differential bundles, and connections and extend them to the formal theory of tangentads.


\section{Recollection on tangentads}
\label{section:tangentads}
Tangentads were initially introduced in~\cite{lanfranchi:grothendieck-tangent-cats} with the name of tangent objects, to establish a Grothendieck construction in the context of tangent categories. In~\cite{lanfranchi:tangentads-I}, this notion was revisited and proved to be suitable to capture a long list of existing concepts in tangent category theory, such as tangent fibrations, tangent monads, tangent split restriction categories, reverse tangent categories, strong display tangent categories, and infinitesimal objects. In this section, we briefly review the definition of a tangentad in a $2$-category.

\subsection{A brief introduction to tangent categories}
\label{subsection:tangent-categories}
Tangent categories were introduced by Rosick\'y in~\cite{rosicky:tangent-cats}, and later revisited and generalized by Cockett and Cruttwell in~\cite{cockett:tangent-cats}, as a categorical context for differential geometry. This section is dedicated to recalling this definition. For starters, recall the definition of an additive bundle and related morphisms.

\begin{definition}
\label{definition:additive-bundle}
In a category $\X$, an \textbf{additive bundle} consists of a pair of objects $B$ and $E$ together with three morphisms $q\colon E\to B$, $z_q\colon B\to E$, and $s_q\colon E_2\to E$, respectively called the \textbf{projection}, the \textbf{zero morphism}, and the \textbf{sum morphism}, where $E_n$ denotes the $n$-fold pullback of $q$ along itself. Moreover, $q$, $z_q$, and $s_q$ need to satisfy the following conditions:
\begin{enumerate}
\item For every positive integer $n$, the $n$-fold pullback
\begin{equation*}
\begin{tikzcd}
{E_n} & E \\
E & M
\arrow["{\pi_n}", from=1-1, to=1-2]
\arrow["{\pi_1}"', from=1-1, to=2-1]
\arrow["\lrcorner"{anchor=center, pos=0.125}, draw=none, from=1-1, to=2-2]
\arrow["\dots"{marking, allow upside down}, shift right=3, draw=none, from=1-2, to=2-1]
\arrow["q", from=1-2, to=2-2]
\arrow["q"', from=2-1, to=2-2]
\end{tikzcd}
\end{equation*}
of the projection $q$ along itself exists;

\item The zero morphism is a section of the projection:
\begin{equation*}
\begin{tikzcd}
M & E \\
& M
\arrow["{z_q}", from=1-1, to=1-2]
\arrow[equals, from=1-1, to=2-2]
\arrow["q", from=1-2, to=2-2]
\end{tikzcd}
\end{equation*}

\item The sum morphism is compatible with the projection. Moreover, $s_q$ is associative, unital, and commutative
\begin{equation*}
\begin{tikzcd}
{E_2} & E \\
E & M
\arrow["{s_q}", from=1-1, to=1-2]
\arrow["{\pi_1}"', from=1-1, to=2-1]
\arrow["q", from=1-2, to=2-2]
\arrow["q"', from=2-1, to=2-2]
\end{tikzcd}\hfill\quad
\begin{tikzcd}
{E_3} & {E_2} \\
{E_2} & E
\arrow["{\id_E\times_Ms_q}", from=1-1, to=1-2]
\arrow["{s_q\times_M\id_E}"', from=1-1, to=2-1]
\arrow["{s_q}", from=1-2, to=2-2]
\arrow["{s_q}"', from=2-1, to=2-2]
\end{tikzcd}\hfill\quad
\begin{tikzcd}
E & {E_2} \\
& E
\arrow["{\<z_q\o q,\id_E\>}", from=1-1, to=1-2]
\arrow[equals, from=1-1, to=2-2]
\arrow["{s_q}", from=1-2, to=2-2]
\end{tikzcd}\hfill\quad
\begin{tikzcd}
{E_2} & {E_2} \\
& E
\arrow["\tau", from=1-1, to=1-2]
\arrow["{s_q}"', from=1-1, to=2-2]
\arrow["{s_q}", from=1-2, to=2-2]
\end{tikzcd}
\end{equation*}
where $\tau\colon E_2\to E_2$ denotes the canonical symmetry $\pi_2\times\pi_1$.
\end{enumerate}
In the following, an additive bundle $(q\colon E\to B,z_q,s_q)$ will be denoted by $\q\colon E\to B$ or by $\q^E_B$. When the symbol that denotes the additive bundle is decorated with a superscript or a subscript, the same decoration is applied to the projection, the zero morphism, and the sum morphism. For instance, the additive bundle $\q'\colon E'\to B'$ consists of the projection $q'$, the zero morphism $z_q'$, and the sum morphism $s_q'$.
\end{definition}

\begin{definition}
\label{definition:additive-bundle-morphism}
An \textbf{additive bundle morphism} from an additive bundle $\q\colon E\to B$ to an additive bundle $\q'\colon E'\to B'$ consists of a pair of morphisms $f\colon B\to B'$ and $g\colon E\to E'$ satisfying the following conditions:
\begin{equation*}
\begin{tikzcd}
E & {E'} \\
B & {B'}
\arrow["g", from=1-1, to=1-2]
\arrow["q"', from=1-1, to=2-1]
\arrow["{q'}", from=1-2, to=2-2]
\arrow["f"', from=2-1, to=2-2]
\end{tikzcd}\hfill\quad
\begin{tikzcd}
E & {E'} \\
B & {B'}
\arrow["g", from=1-1, to=1-2]
\arrow["{z_q}", from=2-1, to=1-1]
\arrow["f"', from=2-1, to=2-2]
\arrow["{z_q'}"', from=2-2, to=1-2]
\end{tikzcd}\hfill\quad
\begin{tikzcd}
{E_2} & {E'_2} \\
E & {E'}
\arrow["{g\times_fg}", from=1-1, to=1-2]
\arrow["{s_q}"', from=1-1, to=2-1]
\arrow["{s_q'}", from=1-2, to=2-2]
\arrow["g"', from=2-1, to=2-2]
\end{tikzcd}
\end{equation*}
An additive bundle morphism is denoted by $(f,g)\colon\q\to\q'$, where, in the first position, $f$ indicates the base morphism and in the second one, $g$ indicates the top morphism. 
\end{definition}

The ``correct'' notion of limits in a tangent category is the notion of $\T$-limits.

\begin{definition}
\label{definition:tangent-limit}
Let $\X$ be a category and $\T\colon\X\to\X$ be an endofunctor of $\X$. A \textbf{$\T$-limit diagram} consists of a limit diagram in $\X$ whose universal property is preserved by all iterates $\T^n$ of $\T$. In the following, we adopt the convention to call \textbf{$\T$-pullbacks}, \textbf{$\T$-equalizers}, etc., the $\T$-limit diagrams which, as limit diagrams, are respectively pullbacks, equalizers, etc. When the endofunctor $\T$ represents the tangent bundle functor of a tangent structure, $\T$-limits are also called \textbf{tangent limits} and the convention extends to $\T$-pullbacks, $\T$-equalizers, etc.
\end{definition}

We can now recall the definition of tangent categories as formulated in~\cite[Definition~2.3]{cockett:tangent-cats}.

\begin{definition}
\label{definition:tangent-category}
A \textbf{tangent structure} on a category $\X$ consists of the following data:
\begin{description}
\item[Tangent bundle functor] An endofunctor $\T\colon\X\to\X$;

\item[Projection]  A natural transformation $p_M\colon\T M\to M$, natural in $M$, which admits all $n$-fold $\T$-pullbacks
\begin{equation*}
\begin{tikzcd}
{\T_nM} & {\T M} \\
{\T M} & M
\arrow["{\pi_n}", from=1-1, to=1-2]
\arrow["{\pi_1}"', from=1-1, to=2-1]
\arrow["\lrcorner"{anchor=center, pos=0.125}, draw=none, from=1-1, to=2-2]
\arrow["\dots"{marking, allow upside down}, shift right=3, draw=none, from=1-2, to=2-1]
\arrow["{p_M}", from=1-2, to=2-2]
\arrow["{p_M}"', from=2-1, to=2-2]
\end{tikzcd}
\end{equation*}
along itself;

\item[Zero morphism] A natural transformation $z_M\colon M\to\T M$, natural in $M$;

\item[Sum morphism] A natural transformation $s_M\colon\T_2M\to\T M$, natural in $M$;
\end{description}
such that, for each $M\in\X$, $\p_M\=(p_M,z_M,s_M)$ is an additive bundle of $\X$;
\begin{description}
\item[Vertical lift] A natural transformation $l_M\colon\T M\to\T^2M$, natural in $M$, where $\T^2M\=\T\T M$, such that:
\begin{align*}
&(z_M,l_M)\colon\p_M\to\T\p_M
\end{align*}
is an additive bundle morphism;

\item[Canonical flip] A natural transformation $c_M\colon\T^2M\to\T^2M$ such that:
\begin{align*}
&(\id_M,c_M)\colon\T\p_M\to\p_{\T M}
\end{align*}
is an additive bundle morphism;
\end{description}
Moreover, the following coherence conditions must hold:
\begin{enumerate}
\item The vertical lift is coassocative, and compatible with the canonical flip:
\begin{equation*}
\begin{tikzcd}
{\T M} & {\T^2M} \\
{\T^2M} & {\T^3M}
\arrow["l_M", from=1-1, to=1-2]
\arrow["lM"', from=1-1, to=2-1]
\arrow["{\T l_M}", from=1-2, to=2-2]
\arrow["{l_{\T M}}"', from=2-1, to=2-2]
\end{tikzcd}\hfill\quad
\begin{tikzcd}
{\T M} & {\T^2M} \\
& {\T^2M}
\arrow["l_M", from=1-1, to=1-2]
\arrow["l_M"', from=1-1, to=2-2]
\arrow["c_M", from=1-2, to=2-2]
\end{tikzcd}\hfill\quad
\begin{tikzcd}
{\T^2M} & {\T^3M} & {\T^3M} \\
{\T^2M} && {\T^3M}
\arrow["{l_{\T M}}", from=1-1, to=1-2]
\arrow["c_M"', from=1-1, to=2-1]
\arrow["{\T c_M}", from=1-2, to=1-3]
\arrow["{c_{\T M}}", from=1-3, to=2-3]
\arrow["{\T l_M}"', from=2-1, to=2-3]
\end{tikzcd}
\end{equation*}

\item The canonical flip is a symmetric braiding:
\begin{equation*}
\begin{tikzcd}
{\T^2M} & {\T^2M} \\
& {\T^2M}
\arrow["c_M", from=1-1, to=1-2]
\arrow[equals, from=1-1, to=2-2]
\arrow["c_M", from=1-2, to=2-2]
\end{tikzcd}\hfill\quad
\begin{tikzcd}
{\T^3M} & {\T^3M} & {\T^3M} \\
{\T^3M} & {\T^3M} & {\T^3M}
\arrow["{\T c_M}", from=1-1, to=1-2]
\arrow["{c_{\T M}}"', from=1-1, to=2-1]
\arrow["{c_{\T M}}", from=1-2, to=1-3]
\arrow["{\T c_M}", from=1-3, to=2-3]
\arrow["{\T c_M}"', from=2-1, to=2-2]
\arrow["{c_{\T M}}"', from=2-2, to=2-3]
\end{tikzcd}
\end{equation*}

\item The tangent bundle is locally linear, namely, the following is a pullback diagram:
\begin{equation*}
\begin{tikzcd}[column sep=huge]
{\T_2M} & {\T\T_2M} & {\T^2M} \\
M && {\T M}
\arrow["{z_{\T M}\times l_M}", from=1-1, to=1-2]
\arrow["{\pi_1p_M}"', from=1-1, to=2-1]
\arrow["{\T s_M}", from=1-2, to=1-3]
\arrow["{\T p_M}", from=1-3, to=2-3]
\arrow["z_M"', from=2-1, to=2-3]
\end{tikzcd}
\end{equation*}
\end{enumerate}
A \textbf{tangent category} consists of a category $\X$ equipped with a tangent structure $\TT\=(\T,p,z,s,l,c)$. Furthermore, a tangent structure \textbf{has negatives} when it is equipped with:
\begin{description}
\item[negation] A natural transformation $n_M\colon\T M\to\T M$, natural in $M$, satisfying the following condition:
\begin{equation*}
\begin{tikzcd}[column sep=huge]
\T M & {\T_2M} \\
{\id_M} & \T M
\arrow["{\<n_M,\id_{\T M}\>}", from=1-1, to=1-2]
\arrow["p_M"', from=1-1, to=2-1]
\arrow["s_M", from=1-2, to=2-2]
\arrow["z_M"', from=2-1, to=2-2]
\end{tikzcd}
\end{equation*}
\end{description}
\end{definition}

In the following, we denote the tangent bundle functor of a tangent structure with the same symbol adopted to denote a tangent structure, e.g., $\TT$, in the font $\T$. The projection, the zero morphism, the sum morphism, the vertical lift, and the canonical flip are respectively denoted by the letters $p$, $z$, $s$, $l$, and $c$, and when the tangent structure has negatives, the negation is denoted by $n$. When the symbol denoting the tangent structure is decorated with a subscript or with a superscript, the tangent bundle functor and each of the structural natural transformations are decorated in the same way, e.g., for a tangent structure $\TT'_\o$, the tangent bundle functor is denoted by $\T'_\o$, the projection by $p'_\o$ and so on.
\par Examples of tangent categories can be found in~\cite{cockett:tangent-cats},~\cite{cockett:differential-bundles}, and~\cite{ikonicoff:operadic-algebras-tagent-cats}. Here is a brief list of selected examples from these resources:
\begin{itemize}
\item Any category with tangent bundle functor and structural natural transformations as identities;

\item The category of (finitely-dimensional) smooth manifolds;

\item The subcategory of microlinear objects in a model of synthetic differential geometry;

\item Any Cartesian differential Category;

\item The category of (affine) schemes over a commutative and unital ring;

\item The category of algebras of a symmetric operad over the symmetric monoidal category of $R$-modules and its opposite category.
\end{itemize}

\subsection{The definition of a tangentad}
\label{subsection:definition-tangentad}
A tangentad consists of an object in a $2$-category together with a structure which formalizes a tangent structure on a category. In particular, tangentads in the $2$-category $\Cat$ of categories are precisely tangent categories. One way to define such a formalization is via Leung's approach to tangent categories. In~\cite{leung:weil-algebras}, Leung showed that tangent structures $\TT$ on a category $\X$ are in correspondence with strong monoidal functors $\Leung[\TT]$ from a monoidal category $\Weil$ to the monoidal category $\End(\X)$ of endofunctors of $\X$, satisfying an extra technical condition. We leave the reader to consult~\cite{leung:weil-algebras} for a complete explanation of this result.
\par For the sake of completeness, here we briefly give a sketch of this correspondence. Firstly, define $\Weil$ as a monoidal category generated by the following family of rigs, a.k.a. semirings
\begin{align*}
&W_n\=\N[x_1\,x_n]/(x_ix_j;i,j=1\,n)
\end{align*}
where the monoidal product is the tensor product of rigs, namely, the pushout in the category of rigs. Moreover, the morphisms of $\Weil$ are generated by the following rig morphisms
\begin{align*}
&p\colon W\to\N             &&z\colon\N\to W\\
&s\colon W_2\to W           &&l\colon W\to W\x W\\
&c\colon W\x W\to W\x W
\end{align*}
where $W=W_1$, $p$ is the augmentation map, namely, $p(a+b\epsilon)\=a$, for each $a+b\epsilon\in W$, where $\epsilon^2=0$; $z$ is the inclusion, namely, $z(a)\=a$; $s$ is the sum, namely, $s(\epsilon_1)=\epsilon=s(\epsilon_2)$; $l$ sends $\epsilon$ to $\epsilon\x\epsilon$; finally, $c$ is the canonical symmetry of the tensor product.
\par A tangent structure $\TT$ on a category $\X$ corresponds to a strong monoidal functor
\begin{align*}
&\Leung[\TT]\colon\Weil\to\End_\CC(\X)
\end{align*}
which sends $W$ to the tangent bundle functor $\T$, each $W_n$ to the $n$-fold pullback of the projection along itself, namely, $\Leung[\TT](W_n)\=\T_n$; $p$ to the projection $p\colon\T\to\id_\X$; $z$ to the zero morphism $z\colon\id_\X\to\T$; $s$ to the sum morphism $s\colon\T_2\to\T$; $l$ to the vertical lift $l\colon\T\to\T^2$; and $c$ to the canonical flip $c\colon\T^2\to\T^2$.
\par Adopting Leung's perspective, the definition of a tangentad becomes straightforward.

\begin{definition}
\label{definition:tangentad}
A \textbf{tangentad} in a $2$-category $\CC$ consists of an object $\X$ of $\CC$ together with a strong monoidal functor
\begin{align*}
&\Leung[\TT]\colon\Weil\to\End_\CC(\X)
\end{align*}
from the monoidal category $\Weil$ to the monoidal category of $1$-endomorphisms of $\X$ in $\CC$, which preserves the fundamental tangent limits in a pointwise way, namely, for every morphism $F\colon\X'\to\X$, the functor
\begin{align*}
&\Weil\xrightarrow{\Leung[\TT]}\End(\X)=\CC(\X,\X)\xrightarrow{\CC(F,\X)}\CC(\X',\X)
\end{align*}
preserves the following pullback diagrams:
\begin{equation*}
\begin{tikzcd}
{W^{\x n}\x W_n} & {W^{\x n}\x W} \\
{W^{\x n}\x W} & {W^{\x n}}
\arrow["{W^{\x n}\x\pi_n}", from=1-1, to=1-2]
\arrow["{W^{\x n}\x\pi_1}"', from=1-1, to=2-1]
\arrow["\lrcorner"{anchor=center, pos=0.125}, draw=none, from=1-1, to=2-2]
\arrow["{W^{\x n}\x p}", from=1-2, to=2-2]
\arrow["\dots"{marking, allow upside down}, shift left=4, draw=none, from=2-1, to=1-2]
\arrow["{W^{\x n}\x p}"', from=2-1, to=2-2]
\end{tikzcd}\hfill\quad
\begin{tikzcd}
{W_2} && {W\x W_2} & {W\x W} \\
\N &&& W
\arrow["{\<l\o\pi_1,(z\x W)\o\pi_2\>}", from=1-1, to=1-3]
\arrow["{\pi_1p}"', from=1-1, to=2-1]
\arrow["{W\x s}", from=1-3, to=1-4]
\arrow["{W\x p}", from=1-4, to=2-4]
\arrow[""{name=0, anchor=center, inner sep=0}, "z"', from=2-1, to=2-4]
\arrow["\lrcorner"{anchor=center, pos=0.125}, draw=none, from=1-1, to=0]
\end{tikzcd}
\end{equation*}
\end{definition}

\begin{remark}
\label{remark:poon-and-christine-approaches}
In~\cite[Definition~14.2]{leung:weil-algebras}, Leung already suggested a generalization of a tangent category, by introducing the notion of a \textit{tangent structure internal} to a monoidal category $\mathcal{E}$ as a strong monoidal functor $\Leung[\TT]\colon\Weil\to\mathcal{E}$ which preserves the fundamental tangent limits in a pointwise way. In this sense, a tangentad consists of an object $\X$ in a $2$-category $\CC$ together with a tangent structure internal to the monoidal category $\End(\X)$.
\par Another approach to generalize tangent structures was employed in~\cite{bauer:infinity-tangent-cats} in which $\infty$-tangent category is defined as a strong monoidal functor from the $\infty$-category of Weil algebras $\Weil^\infty$ to the monoidal category $\End(\X)$ of endomorphisms of an $\infty$-category $\X$.
\par In an informal discussion, Michael Ching points out that $\infty$-tangent categories can be generalized to strong monoidal functors from $\Weil^\infty$ to the monoidal category $\End_\CC(\X)$, where $\CC$ is a $(\infty,2)$-category, which preserve suitable limits. Such functors split along the homotopy category of $\Weil^\infty$, which coincides with $\Weil$. Thus, every such functor gives rise to a tangentad.
\end{remark}

Before proceeding further, let us unpack Definition~\ref{definition:tangentad}. A tangentad in a $2$-category $\CC$ consists of:
\begin{description}
\item[Base object] An object $\X$;

\item[Tangent bundle $1$-morphism] A $1$-endomorphism $\T\colon\X\to\X$;

\item[Projection]  A $2$-morphism $p\colon\T\Rightarrow\id_\X$, which admits all pointwise $n$-fold pullbacks
\begin{equation*}
\begin{tikzcd}
	{\T_n} & \T \\
	\T & {\id_\X}
	\arrow["{\pi_n}", from=1-1, to=1-2]
	\arrow["{\pi_1}"', from=1-1, to=2-1]
	\arrow["\lrcorner"{anchor=center, pos=0.125}, draw=none, from=1-1, to=2-2]
	\arrow["\dots"{marking, allow upside down}, shift right=3, draw=none, from=1-2, to=2-1]
	\arrow["p", from=1-2, to=2-2]
	\arrow["p"', from=2-1, to=2-2]
\end{tikzcd}
\end{equation*}
which are preserved  by all iterates $\T^n$ of $\T$;

\item[Zero morphism] A $2$-morphism $z\colon\id_\X\Rightarrow\T$;

\item[Sum morphism] A $2$-morphism $s\colon\T_2\Rightarrow\T$;
\end{description}
such that, $\p\=(p,z,s)$ is a $\bar\T$-additive bundle of $\End(\X)$, where $\bar\T$ sends an endomorphism $F\colon\X\to\X$ to $\T\o F$;
\begin{description}
\item[Vertical lift] A $2$-morphism $l\colon\T\Rightarrow\T^2$ such that:
\begin{align*}
&(z,l)\colon\p\to\T\p
\end{align*}
is an additive bundle morphism;

\item[Canonical flip] A $2$-morphism $c\colon\T^2\Rightarrow\T^2$ such that:
\begin{align*}
&(\id_\X,c)\colon\T\p\to\p_\T
\end{align*}
is an additive bundle morphism;
\end{description}
Moreover, the following coherence conditions must hold:
\begin{enumerate}
\item The vertical lift is coassocative, and compatible with the canonical flip:
\begin{equation*}
\begin{tikzcd}
{\T} & {\T^2} \\
{\T^2} & {\T^3}
\arrow["l", from=1-1, to=1-2]
\arrow["l"', from=1-1, to=2-1]
\arrow["{\T l}", from=1-2, to=2-2]
\arrow["{l_\T}"', from=2-1, to=2-2]
\end{tikzcd}\hfill\quad
\begin{tikzcd}
{\T} & {\T^2} \\
& {\T^2}
\arrow["l", from=1-1, to=1-2]
\arrow["l"', from=1-1, to=2-2]
\arrow["c", from=1-2, to=2-2]
\end{tikzcd}\hfill\quad
\begin{tikzcd}
{\T^2} & {\T^3} & {\T^3} \\
{\T^2} && {\T^3}
\arrow["{l_\T}", from=1-1, to=1-2]
\arrow["c"', from=1-1, to=2-1]
\arrow["{\T c}", from=1-2, to=1-3]
\arrow["{c_\T}", from=1-3, to=2-3]
\arrow["{\T l}"', from=2-1, to=2-3]
\end{tikzcd}
\end{equation*}

\item The canonical flip is a symmetric braiding:
\begin{equation*}
\begin{tikzcd}
{\T^2} & {\T^2} \\
& {\T^2}
\arrow["c", from=1-1, to=1-2]
\arrow[equals, from=1-1, to=2-2]
\arrow["c", from=1-2, to=2-2]
\end{tikzcd}\hfill\quad
\begin{tikzcd}
{\T^3} & {\T^3} & {\T^3} \\
{\T^3} & {\T^3} & {\T^3}
\arrow["{\T c}", from=1-1, to=1-2]
\arrow["{c_\T}"', from=1-1, to=2-1]
\arrow["{c_\T}", from=1-2, to=1-3]
\arrow["{\T c}", from=1-3, to=2-3]
\arrow["{\T c}"', from=2-1, to=2-2]
\arrow["{c_\T}"', from=2-2, to=2-3]
\end{tikzcd}
\end{equation*}

\item The tangent bundle is locally linear, namely, the following is a pointwise pullback diagram:
\begin{equation*}
\begin{tikzcd}
{\T_2} & {\T\o\T_2} & {\T^2} \\
\id_\X && {\T}
\arrow["{z_\T\times l}", from=1-1, to=1-2]
\arrow["{\pi_1p}"', from=1-1, to=2-1]
\arrow["{\T s}", from=1-2, to=1-3]
\arrow["{\T p}", from=1-3, to=2-3]
\arrow["z"', from=2-1, to=2-3]
\end{tikzcd}
\end{equation*}
\end{enumerate}
Furthermore, a tangentad admits negatives when it is equipped with:
\begin{description}
\item[Negation] A $2$-morphism $n\colon\T\Rightarrow\T$ such that:
\begin{equation*}
\begin{tikzcd}
\T & {\T_2} \\
{\id_\X} & \T
\arrow["{\<n,\id_\T\>}", from=1-1, to=1-2]
\arrow["p"', from=1-1, to=2-1]
\arrow["s", from=1-2, to=2-2]
\arrow["z"', from=2-1, to=2-2]
\end{tikzcd}
\end{equation*}
\end{description}

\subsection{The \texorpdfstring{$2$-categories}{2-categories} of tangentads}
\label{subsection:2-categories-tangentads}
The tangentads of a $2$-category $\CC$ can be organized in four different $2$-categories. In this section, we recall the notion of $1$ and $2$-morphisms between tangentads. Let us start with $1$-morphisms.

\begin{definition}
\label{definition:tangent-morphisms}
Let $(\X,\TT)$ and $(\X',\TT')$ be two tangentads of $\CC$. A \textbf{lax tangent $1$-morphism} from $(\X,\TT)$ to $(\X',\TT')$ consists of a $1$-morphism $F\colon\X\to\X'$ together with a $2$-morphism $\alpha\colon F\o\T\to\T'\o F$, called a \textbf{lax distributive law} satisfying the following conditions:
\begin{enumerate}
\item Additivity:
\begin{equation*}
\begin{tikzcd}
{F\o\T} & {\T'\o F} \\
F & F
\arrow["\alpha", from=1-1, to=1-2]
\arrow["Fp"', from=1-1, to=2-1]
\arrow["{p'F}", from=1-2, to=2-2]
\arrow[equals, from=2-1, to=2-2]
\end{tikzcd}\hfill\quad
\begin{tikzcd}
{F\o\T} & {\T'\o F} \\
F & F
\arrow["\alpha", from=1-1, to=1-2]
\arrow["Fz", from=2-1, to=1-1]
\arrow[equals, from=2-1, to=2-2]
\arrow["{z'F}"', from=2-2, to=1-2]
\end{tikzcd}\hfill\quad
\begin{tikzcd}
{F\o\T_2} && {\T'_2\o F} \\
{F\o\T} && {\T'\o F}
\arrow["{\<\alpha\o F\pi_1,\alpha\o F\pi_2\>}", from=1-1, to=1-3]
\arrow["Fs"', from=1-1, to=2-1]
\arrow["{s'F}", from=1-3, to=2-3]
\arrow["\alpha"', from=2-1, to=2-3]
\end{tikzcd}
\end{equation*}

\item Compatibility with the vertical lift:
\begin{equation*}
\begin{tikzcd}
{F\o\T^2} & {\T'\o F\o\T} & {\T'^2\o F} \\
{F\o\T} && {\T'\o F}
\arrow["{\alpha\T}", from=1-1, to=1-2]
\arrow["{\T'\alpha}", from=1-2, to=1-3]
\arrow["Fl", from=2-1, to=1-1]
\arrow["\alpha"', from=2-1, to=2-3]
\arrow["{l'F}"', from=2-3, to=1-3]
\end{tikzcd}
\end{equation*}

\item Compatibility with the canonical flip:
\begin{equation*}
\begin{tikzcd}
{F\o\T^2} & {\T'\o F\o\T} & {\T'^2\o F} \\
{F\o\T^2} & {\T'\o F\o\T} & {\T'^2\o F}
\arrow["{\alpha\T}", from=1-1, to=1-2]
\arrow["{\T'\alpha}", from=1-2, to=1-3]
\arrow["Fc", from=2-1, to=1-1]
\arrow["{\alpha\T}"', from=2-1, to=2-2]
\arrow["{\T'\alpha}"', from=2-2, to=2-3]
\arrow["{c'F}"', from=2-3, to=1-3]
\end{tikzcd}
\end{equation*}
\end{enumerate}
A \textbf{colax tangent $1$-morphism} from $(\X,\TT)$ to $(\X',\TT')$ consists of a $1$-morphism $G\colon\X\to\X'$ together with a $2$-morphism $\beta\colon\T'\o G\to G\o\T$, called a \textbf{colax distributive law}, which satisfies the dual conditions of a lax distributive law, namely, additivity and the compatibility with the vertical lift and the canonical flip, in which the direction of the distributive law is now reversed.
\par A \textbf{strong tangent $1$-morphism} is a lax tangent $1$-morphism whose distributive law is invertible. A \textbf{strict tangent $1$-morphism} is a strong tangent $1$-morphism whose distributive law is the identity.\newline
Finally, a tangent morphism $(F,\alpha)\colon(\X,\TT)\to(\X',\TT')$ \textbf{preserves the fundamental tangent limits} when the functor $\CC(\X,F)\colon\End(\X)\to\CC(\X,\X')$ preserves the fundamental tangent limits of $(\X,\TT)$ in a pointwise way.
A lax tangent $1$-morphism is denoted by $(F,\alpha)\colon(\X,\TT)\to(\X',\TT')$ while for colax tangent $1$-morphisms we adopt the notation $(F,\alpha)\colon(\X,\TT)\nto(\X',\TT')$. Finally, a strict tangent $1$-morphism is denoted by its underlying $1$-morphism.
\end{definition}

We can now recall the notion of $2$-morphisms of tangentads.

\begin{definition}
\label{definition:tangent-2-morphisms}
Given two lax tangent morphisms $(F,\alpha),(G,\beta)\colon(\X,\TT)\to(\X',\TT')$ of tangentads in $\CC$, a \textbf{tangent $2$-morphism} from $(F,\alpha)$ to $(G,\beta)$ consists of a $2$-morphism $\varphi\colon F\Rightarrow G$ satisfying the following compatibility between the distributive laws:
\begin{equation*}
\begin{tikzcd}
{F\o\T} & {\T'\o F} \\
{G\o\T} & {\T'\o G}
\arrow["\alpha", from=1-1, to=1-2]
\arrow["{\varphi\T}"', from=1-1, to=2-1]
\arrow["{\T'\varphi}", from=1-2, to=2-2]
\arrow["\beta"', from=2-1, to=2-2]
\end{tikzcd}
\end{equation*}
A tangent $2$-morphism between two colax tangent morphisms $(F,\alpha),(G,\beta)\colon(\X,\TT)\nto(\X',\TT')$ consists of a $2$-morphism $\varphi\colon F\Rightarrow G$ which satisfies the same compatibility as a tangent $2$-morphism between lax tangent morphisms, where the distributive laws are reversed.
\end{definition}

Tangentads with their tangent $1$ and $2$-morphisms can be organized into four distinct $2$-categories:
\begin{itemize}
\item Tangentads of $\CC$, lax tangent morphisms, and corresponding $2$-morphisms form the $2$-category $\Tng(\CC)$;

\item Tangentads of $\CC$, colax tangent morphisms, and corresponding $2$-morphisms form the $2$-category $\Tng_\co(\CC)$;

\item Tangentads of $\CC$, strong tangent morphisms, and corresponding $2$-morphisms form the $2$-category $\Tng_\cong(\CC)$;

\item Tangent categories, strict tangent morphisms, and corresponding $2$-morphisms form the $2$-category $\Tng_=(\CC)$.
\end{itemize}

\subsection{Examples of tangentads}
\label{subsection:examples-tangentads}
In~\cite{lanfranchi:tangentads-I}, we explored a long list of tangentads. We now briefly recall some of these examples.

\begin{example}
\label{example:tangent-categories}
Tangent categories are the archetypal example of tangentads. In particular, they are tangentads in the $2$-category $\Cat$ of categories.
\end{example}

\begin{example}
\label{example:tangent-monads}
Tangent monads, introduced in~\cite[Definition~19]{cockett:tangent-monads}, are monads in the $2$-category of tangent categories. Concretely, a tangent monad on a tangent category $(\X,\TT)$ consists of a monad $S\colon\X\to\X$ equipped with a natural transformation $\alpha_M\colon S\T M\to\T SM$, for $M\in\X$, such that $(S,\alpha)\colon(\X,\TT)\to(\X,\TT)$ is a lax tangent morphism. Furthermore, the unit $\eta_M\colon M\to SM$ and the multiplication $\mu_M\colon S^2M\to SM$ of the monad $S$ are tangent natural transformations, namely, they are compatible with $\alpha$.
\par More generally, a tangent monad internal to a $2$-category $\CC$ is monad in the $2$-category $\Tng(\CC)$ of tangentads of $\CC$.
\par \cite[Proposition~4.2]{lanfranchi:tangentads-I} shows that, for a $2$-category $\CC$, the $2$-category $\Mnd(\Tng(\CC))$ of monads in the $2$-category of tangentads of $\CC$ is isomorphic to the $2$-category $\Tng(\Mnd(\CC))$ of tangentads in the $2$-category of monads of $\CC$. In particular, tangent monads are tangentads in the $2$-category $\Mnd\=\Mnd(\Cat)$ of monads.
\end{example}

\begin{example}
\label{example:tangent-fibrations}
Tangent fibrations, introduced in~\cite[Definition~5.2]{cockett:differential-bundles}, are (cloven) fibrations\linebreak $\Pi\colon(\X',\TT')\to(\X,\TT)$ between two tangent categories which preserve the tangent structures strictly, namely, $\Pi$ is a strict tangent morphism, and whose tangent bundle functors define an endomorphism $(\T,\T')\colon\Pi\to\Pi$ of fibrations, namely, the tangent bundle functor $\T'$ sends each Cartesian lift $\varphi_f\colon f^\*E\to E$ of each morphism $f\colon A\to B$ of $(\X,\TT)$ and each $E\in\Pi^{-1}(B)$ in the fibre over $B$, to a Cartesian lift $\T'(\varphi_f)\colon\T'(f^\*E)\to\T'E$ of $\T f$ onto $\T'E$.
\par \cite[Proposition~5.1]{lanfranchi:grothendieck-tangent-cats} proves that tangent fibrations are tangentads in the $2$-category $\Fib$. Concretely, the objects of $\Fib$ are triples $(\X,\X';\Pi)$ formed by two categories $\X$ and $\X'$ and a fibration $\Pi\colon\X'\to\X$. $1$-morphisms of $\Fib$ $(F,F')\colon(\X_\o,\X_\o';\Pi_\o)\to(\X_\b,\X'_\b;\Pi_\b)$ are pairs of functors $F\colon\X_\o\to\X_\b$ and $F'\colon\X_\o'\to\X_\b'$ which strictly commute with the fibrations, namely, $\Pi_\b\o F'=F\o\Pi_\o$, and preserve the Cartesian lifts, namely, $F'$ sends each Cartesian lift $\varphi_f\colon f^\*E\to E$ of a morphism $f\colon A\to B$ of $\X_\o$ to a Cartesian lift of $Ff$. Finally, $2$-morphisms of $\Fib$
\begin{align*}
&(\varphi,\varphi')\colon(F,F')\Rightarrow(G,G')\colon(\X_\o,\X_\o';\Pi_\o)\to(\X_\b,\X_\b';\Pi_\b)
\end{align*}
are pairs of natural transformations $\varphi\colon F\Rightarrow G$ and $\varphi'\colon F'\Rightarrow G'$ which commute with the fibrations, namely, $\Pi_\b\varphi'=\varphi_{\Pi_\o}$.
\end{example}

\begin{example}
\label{example:tangent-indexed-categories}
Tangent indexed categories were introduced in~\cite[Definition~5.3]{lanfranchi:grothendieck-tangent-cats} as tangentads in the $2$-category $\Indx$ of indexed categories. Concretely, the objects of $\Indx$ are pairs $(\X,\I)$ formed by a category $\X$ and an indexed category, a.k.a., a pseudofunctor $\X^\op\to\Cat$. The $1$-morphisms $(F,F',\xi)\colon(\X_\o,\I_\o)\to(\X_\b,\I_\b)$ of $\Indx$ consist of a functor $F\colon\X_\o\to\X_\b$, a collection $F'$ of functors $F_A'\colon\I_\o(A)\to\I_\b(FA)$, indexed by the objects $A$ of $\X_\o$, and a collection of natural isomorphisms $\xi^f\colon F_A'\o\I_\o(f)\Rightarrow\I_\b(Ff)\o F_B'$, called distributors, indexed by the morphisms $f\colon A\to B$ of $\X_\o$. Finally, the $2$-morphisms of $\Indx$
\begin{align*}
&(\varphi,\varphi')\colon(F,F',\xi)\to(G,G',\kappa)\colon(\X_\o,\I_\o)\to(\X_\b,\I_\b)
\end{align*}
consist of a natural transformation $\varphi\colon F\Rightarrow G$ together with a collection $\varphi'$ of natural transformations $\varphi_A'\colon F_A'\Rightarrow G_A'\o\I_\b(\varphi_A)$ which are compatible with the distributors.
\par A tangentad in $\Indx$, a.k.a., a tangent indexed category, consists of the following data:
\begin{description}
\item[Base tangent category] A tangent category $(\X,\TT)$;

\item[Indexed category] An indexed category $\I\colon\X^\op\to\Cat$ which sends each object $M\in\X$ to a category $\X^M$ and each morphism $f\colon M\to N$ to a functor $f^\*\X^N\to\X^N$;

\item[Indexed tangent functor] A list of functors $\T'^M\colon\X^M\to\X^{\T M}$, indexed by the objects of $M$, together with a list of natural isomorphisms $\xi^f_E\colon(\T f)^\*\T'^NE\to\T'^Mf^\*E$, indexed by the morphisms $f\colon M\to N$ of $\X$;

\item[Indexed projection] A list of natural transformations $p'^M_E\colon\T'^ME\to p^\*E$;

\item[Indexed zero morphism] A list of natural transformations $z'^M_E\colon E\to z^\*\T'^ME$;

\item[Indexed sum morphism] A list of natural transformations $s'^M_E\colon\T'^M_2E\to s^\*\T'^ME$;

\item[Indexed vertical lift] A list of natural transformations $l'^M_E\colon\T'^ME\to l^\*\T'^{\T M}\T'^ME$;

\item[Indexed canonical flip] A list of natural transformations $c'^M_E\colon\T'^{\T M}\T'^ME\to c^\*\T'^{\T M}\T'^ME$.
\end{description}
The indexed projection, zero morphism, sum morphism, vertical lift, and canonical lift must also be compatible with the distributors of $\T'$ and need to satisfy the axioms of a tangent structure.
\par Finally, we mention that \cite[Theorem~5.5]{lanfranchi:grothendieck-tangent-cats} shows a $2$-equivalence between the $2$-category $\TngFib$ of tangent fibrations and the $2$-category $\TngIndx$ of tangent indexed categories.
\end{example}

\begin{example}
\label{example:tangent-restriction-categories}
Tangent restriction categories, introduced in~\cite[Definition~6.14]{cockett:tangent-cats}, are restriction categories equipped with an endofunctor $\T$ which preserves the restriction idempotents, together with structural total natural transformations $p_M\colon\T M\to M$, $z_M\colon M\to\T M$, $s_M\colon\T_2M\to\T M$, $l_M\colon\T M\to\T^2M$, and $c_M\colon\T^2M\to\T^2M$ similar to the structural natural transformations of a tangent structure and that satisfy similar axioms, but in which the $n$-fold pullback of $p$ along itself and the pullback diagram of the universality of the vertical lift are replaced with restriction pullbacks. We suggest the reader consult~\cite{cockett:restrictionI} for a definition of restriction category and~\cite{cockett:restrictionIII} for a definition of restriction limits. As noticed in~\cite[Section~4.6]{lanfranchi:tangentads-I}, tangent restriction categories are not an example of tangentads. Nevertheless, as proved in~\cite[Proposition~4.35]{lanfranchi:tangentads-I}, tangent split restriction categories, namely, tangent restriction categories whose restriction idempotents split, are precisely the tangentads in the $2$-category $\sRestrCat$ of split restriction categories, restriction functors, and total natural transformations.
\par As proved by~\cite[Lemma~4.36]{lanfranchi:tangentads-I}, every tangent restriction category embeds into a tangent split restriction category. Thus, in particular, every tangent restriction category embeds into a tangentad.
\end{example}

\subsection{The pointwise tangent structure on the Hom categories}
\label{subsection:hom-categories}
\cite[Example~2.2(vii)]{cockett:differential-bundles} shows that the category $\Cat(\X',\X)$ of functors $F\colon\X'\to\X$ comes equipped with a tangent structure defined pointwise, provided that $\X$ comes with a tangent structure $\TT$. In particular, the tangent bundle functor of a functor $F$ is the functor $\T\o F$. In this section, we generalize this construction by proving that the Hom-categories in the $2$-category of tangentads of a given $2$-category $\CC$ come with a tangent structure. This plays a crucial role in our story.
\par For starters, notice that each tangentad $(\X,\TT)$ is a tangentad in the $2$-category $\Tng(\CC)$ of tangentads of $\CC$. In particular, the tangent bundle $1$-morphism is the strong tangent morphism $(\T,c)\colon(\X,\TT)\to(\X,\TT)$ and the structural $2$-morphisms are the same of $\TT$.

\begin{proposition}
\label{proposition:hom-tangent-categories}
For two tangentads $(\X,\TT)$ and $(\X',\TT')$ (with negatives), the category of lax tangent morphisms $\Tng(\CC)[\X',\TT';\X,\TT]$ from $(\X',\TT')$ to $(\X,\TT)$ comes with a tangent structure (with negatives) defined pointwise. Concretely, the tangent bundle functor sends a lax tangent morphism $(F,\alpha)\colon(\X',\TT')\to(\X,\TT)$ to the lax tangent morphism:
\begin{align*}
&\bar\T(F,\alpha)\colon(\X',\TT')\xrightarrow{(F,\alpha)}(\X,\TT)\xrightarrow{(\T,c)}(\X,\TT)
\end{align*}
Moreover, $\bar\T$ sends a morphism $\varphi\colon(F,\alpha)\to(G,\beta)$ of lax tangent morphisms to the morphism:
\begin{align*}
&\bar\T\varphi\=\T\varphi\colon\bar\T(F,\alpha)\to\bar\T(G,\beta)
\end{align*}
Furthermore, the Hom-category $2$-functor $\Tng(\CC)(-,?)$ lifts along the forgetful functor $\U\colon\TngCat\to\Cat$ to a $2$-functor
\begin{align*}
&\Tng(\CC)^\op\times\Tng(\CC)\to\TngCat
\end{align*}
strict and contravariant in the first argument and lax and covariant in the second.
\end{proposition}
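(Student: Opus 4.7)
The plan is to obtain the tangent structure on the Hom-category as the image of a tangentad structure on $(\X, \TT)$ internal to $\Tng(\CC)$, transported along the representable $2$-functor $\Tng(\CC)((\X', \TT'), -)\colon \Tng(\CC) \to \Cat$. The key preliminary observation, announced just before the statement, is that every tangentad $(\X, \TT)$ in $\CC$ canonically lifts to a tangentad in $\Tng(\CC)$: its tangent bundle $1$-morphism is the strong tangent morphism $(\T, c)\colon (\X, \TT) \to (\X, \TT)$, and its structural $2$-morphisms are $p, z, s, l, c$ (and $n$) themselves. Verifying this lift reduces to checking that each of these $2$-morphisms is a tangent $2$-morphism between the appropriate tangent $1$-morphisms; for instance, $p\colon (\T, c) \to (\id_\X, \id)$ is a tangent $2$-morphism precisely because $\T p \o c = p \T$, which is the standard compatibility between the canonical flip and the projection. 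The remaining checks use the analogous compatibilities of $z, s, l$ with $c$ together with the Yang--Baxter-type axiom for $c$, all of which are direct consequences of the tangent category axioms applied at the level of $2$-cells.

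Once this lift is in place, applying the representable $2$-functor $\Tng(\CC)((\X', \TT'), -)$ yields a tangentad in $\Cat$ supported by the Hom-category $\Tng(\CC)((\X', \TT'), (\X, \TT))$. The induced tangent bundle functor $\bar\T$ is postcomposition with $(\T, c)$, sending $(F, \alpha)$ to $(\T F, c F \o \T \alpha)$ and a tangent $2$-morphism $\varphi$ to $\T \varphi$. The $n$-fold tangent pullbacks required to make this a tangent category are the pointwise pullbacks $\T_n \o F$, and both their existence in the Hom-category and their preservation by iterates of $\bar\T$ follow directly from the pointwise condition in Definition~\ref{definition:tangentad} applied at the $1$-morphism $(F, \alpha)$ of $\Tng(\CC)$. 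The tangent category axioms are then inherited automatically, since they hold as identities among $2$-morphisms in $\Tng(\CC)$. When $(\X, \TT)$ admits negatives, the same mechanism equips the Hom-category with negatives via $n$.

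For the $2$-functoriality statement, one verifies that precomposition with a lax tangent morphism $(H, \eta)$ defines a strict tangent functor, because the distributive laws on both sides of the equation $\bar\T(- \o (H, \eta)) = \bar\T(-) \o (H, \eta)$ coincide by the definition of $\bar\T$ and the interchange law. Postcomposition with a lax tangent morphism $(K, \kappa)\colon (\X, \TT) \to (\Y, \TT_\Y)$ defines a lax tangent functor, with distributive law at $(F, \alpha)$ given by the whiskered $2$-cell $\kappa F\colon K \T F \to \T_\Y K F$; the lax distributive law axioms reduce directly to the corresponding axioms for $\kappa$. The main obstacle is the initial lift of $(\X, \TT)$ to a tangentad in $\Tng(\CC)$: verifying that the pointwise tangent pullbacks of $\End_\CC(\X)$ give pullbacks in $\End_{\Tng(\CC)}((\X, \TT))$ compatible with all distributive laws requires routine but voluminous diagrammatic bookkeeping.
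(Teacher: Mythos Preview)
Your approach is correct and essentially the same as the paper's, repackaged more conceptually. Both proofs build the tangent structure on the Hom-category by postcomposing with $(\T,c)$ and its structural $2$-cells; the paper phrases this as directly constructing a Leung functor $\Weil\to\End(\Tng(\CC)[\X',\TT';\X,\TT])$, while you phrase it as transporting the lifted tangentad along the representable $\Tng(\CC)((\X',\TT'),-)$. The one substantive difference is in handling the limit conditions: where you describe the verification that the pointwise pullbacks in $\End_\CC(\X)$ lift to $\End_{\Tng(\CC)}((\X,\TT))$ as ``routine but voluminous diagrammatic bookkeeping'', the paper invokes a specific lemma from the prequel (\cite[Proposition~2.24]{lanfranchi:tangentads-I}) stating that the forgetful $2$-functor $\U\colon\Tng(\CC)\to\CC$ reflects tangent limits, and uses this explicitly to establish universality of the vertical lift. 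Your sentence ``follow directly from the pointwise condition in Definition~\ref{definition:tangentad} applied at the $1$-morphism $(F,\alpha)$ of $\Tng(\CC)$'' slightly oversells what the pointwise condition alone gives you---it gives the pullback in $\CC(\X',\X)$, and reflecting it up to $\Tng(\CC)((\X',\TT'),(\X,\TT))$ is precisely the content of that reflection lemma. Since you do flag this gap in your final paragraph, the argument is complete in spirit; it would be strengthened by citing or reproving the reflection property rather than leaving it as bookkeeping.
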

\begin{proof}
To define a tangent structure is to define a Leung functor (cf.~\cite[Definition~2.11]{lanfranchi:tangentads-I}), namely, a strong monoidal functor
\begin{align*}
&\Leung[\bar\TT]\colon\Weil\to\End\left(\Tng(\CC)[\X',\TT';\X,\TT]\right)
\end{align*}
which preserves the pointwise fundamental tangent limits. Let us define $\Leung[\bar\TT]$ as follows. $\Leung[\bar\TT]$ sends the Weil algebra $W_n$ to the endofunctor $(\T_n,c_n)\o(-)$ of $\Tng(\CC)[\X',\TT';\X,\TT]$ which postcomposes with $(\T_n,c_n)$, namely the $n$-fold pullback of the projection along itself. Moreover, the structural morphisms $p$, $z$, $s$, $l$, and $c$ (and $n$ when negatives are considered)  are sent to the tangent natural transformations $p\o(-)$, $z\o(-)$, $s\o(-)$, $l\o(-)$, and $c\o(-)$ (and $n\o(-)$, when negatives are considered), respectively. One needs to use the axioms of a tangent structure to check that these are, in fact, tangent natural transformations.
\par This defines an assignment on the generators of $\Weil$, thus it automatically extends to a strong monoidal functor $\Leung[\bar\TT]$. Let us prove that such a strong functor preserves the pointwise fundamental tangent limits. The preservation of the pointwise $n$-fold pullback of the projection is a direct consequence of $W_n$ being sent to $(\T_n,c_n)$, which corresponds to the $n$-fold pointwise pullback of the projection along itself. Therefore, the only remaining axiom to be checked is the universal property of the vertical lift. Consider a lax tangent morphism $(F,\alpha)$ and the following diagram:
\begin{equation*}
\begin{tikzcd}
{(\T_2,c_2)\o(F,\alpha)} & {(\T,c)\o(\T_2,c_2)\o(F,\alpha)} & {(\T,c)^2\o(F,\alpha)} \\
{(F,\alpha)} && {(\T,c)\o(F,\alpha)}
\arrow["{l\times z_{(\T,c)}}", from=1-1, to=1-2]
\arrow["{p\o\pi_1}"', from=1-1, to=2-1]
\arrow["{(\T,c)s}", from=1-2, to=1-3]
\arrow["{(\T,c)p}", from=1-3, to=2-3]
\arrow["z"', from=2-1, to=2-3]
\end{tikzcd}
\end{equation*}
By~\cite[Proposition~2.24]{lanfranchi:tangentads-I}, the forgetful functor $\U\colon\Tng(\CC)\to\CC$ reflects tangent limits. By applying $\U$ to the above diagram we obtain the diagram:
\begin{equation*}
\begin{tikzcd}
{\T_2\o F} & {\T\o\T_2\o F} & {\T^2\o F} \\
F && {\T\o F}
\arrow["{l\times z_\T}", from=1-1, to=1-2]
\arrow["{p\o\pi_1}"', from=1-1, to=2-1]
\arrow["{\T s}", from=1-2, to=1-3]
\arrow["{\T p}", from=1-3, to=2-3]
\arrow["z"', from=2-1, to=2-3]
\end{tikzcd}
\end{equation*}
which is a pointwise $\T$-limit by the universal property of the vertical lift of $(\X,\TT)$. Therefore, by reflectivity, the original diagram is a pointwise $(\T,c)$-limit, proving that the vertical lift is universal in $\Tng(\CC)[\X',\TT';\X,\TT]$.
\par To prove that the Hom-category $2$-functor lifts along the forgetful functor $\TngCat\to\Cat$, consider a lax tangent morphism $(F,\alpha)\colon(\X'',\TT'')\to(\X',\TT')$. It is not hard to see that the functor
\begin{align*}
&[F,\alpha\|\X,\TT]\colon[\X',\TT'\|\X,\TT]\to[\X'',\TT''\|\X,\TT]
\end{align*}
which precomposes lax tangent morphisms with $(F,\alpha)$ extends to strict tangent morphism. Similarly, given a lax tangent morphism $(G,\beta)\colon(\X,\TT)\to(\X'',\TT'')$, the lax ditributive law $\beta\colon G\o\T\Rightarrow\T''\o G$ lifts a lax distributive law $[\X',\TT'\|G,\beta]\o\bar\T\Rightarrow\bar\T\o[\X',\TT'\|G,\beta]$. Finally, tangent $2$-morphisms are sent to tangent $2$-morphisms in the obvious way.
\end{proof}

In the following, when the ambient $2$-category $\CC$ is clear from the context, we denote the Hom-tangent categories of Proposition~\ref{proposition:hom-tangent-categories} by:
\begin{align*}
&[\X',\TT'\|\X,\TT]\=\Tng(\CC)[\X',\TT';\X,\TT]
\end{align*}

The converse of Proposition~\ref{proposition:hom-tangent-categories} is also true, namely, every corepresentable $2$-functor $\Gamma\colon\CC\to\TngCat$ defines a tangentad in $\CC$.

\begin{corollary}
\label{corollary:tangentad-representable-2-functors}
A tangentad $(\X,\TT)$ in a $2$-category $\CC$ defines a corepresentable $2$-functor $[-\|\X,\TT]\colon\CC^\op\to\TngCat$. Moreover, every corepresentable $2$-functor $\CC[-\|\X]\colon\CC^\op\to\TngCat$ defines a tangentad in $\CC$.
\end{corollary}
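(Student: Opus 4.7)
The plan is to use the $2$-categorical Yoneda lemma to translate tangent data between $\CC$ and $\TngCat$.

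For the first direction, I would apply a slight variant of Proposition~\ref{proposition:hom-tangent-categories} that dispenses with the tangent structure on the domain. Concretely, each Hom-category $\CC(\X',\X)$ is equipped with the pointwise tangent structure obtained by postcomposing with $\T$ and with the structural $2$-morphisms $p,z,s,l,c$ of $\TT$. All the tangent axioms transfer immediately from $\TT$, and the preservation clause built into Definition~\ref{definition:tangentad} is precisely the condition that the $n$-fold pullbacks of $p$ and the vertical-lift pullback exist and are preserved by iterates of the postcomposition functor in each $\CC(\X',\X)$. The assignment on $1$- and $2$-morphisms of $\CC$ is by precomposition, which strictly commutes with postcomposition by $\T$, producing strict tangent morphisms and tangent $2$-morphisms. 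This yields the $2$-functor $[-\|\X,\TT]\colon\CC^\op\to\TngCat$.

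For the converse, suppose $\Gamma\colon\CC^\op\to\TngCat$ is a $2$-functor such that the composite $\U\o\Gamma$ with the forgetful $\U\colon\TngCat\to\Cat$ is isomorphic to $\CC(-,\X)$ for some $\X\in\CC$. I would extract a tangent structure on $\X$ via the $2$-Yoneda lemma: a $2$-natural endomorphism of $\CC(-,\X)$ corresponds uniquely to a $1$-endomorphism of $\X$, and a $2$-natural transformation between such endomorphisms corresponds uniquely to a $2$-morphism in $\End(\X)$. Applied to the tangent bundle functor of $\Gamma$ evaluated at $\id_\X\in\Gamma(\X)\cong\CC(\X,\X)$, this produces the tangent bundle $1$-morphism $\T\colon\X\to\X$, and the structural natural transformations of the tangent structures on $\Gamma(\X')$ yield, in the same way, the structural $2$-morphisms $p,z,s,l,c$ (and $n$, if negatives are present) on $\X$.

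To verify that these data assemble into a tangentad, I would note that every axiom in Definition~\ref{definition:tangent-category} is a commuting diagram in $\Gamma(\X)$, and by $2$-Yoneda such a diagram commutes exactly when the corresponding diagram of $2$-morphisms commutes in $\End(\X)$; hence the additive bundle axioms, the coassociativity of the vertical lift, the symmetric-braiding conditions for the canonical flip, and the compatibility of $l$ with $c$ transfer verbatim. The main obstacle is the pointwise preservation of the fundamental tangent limits required by Definition~\ref{definition:tangentad}: this asks that for every $F\colon\X'\to\X$, the $n$-fold pullbacks of $p$ and the vertical-lift pullback in $\End(\X)$ be preserved by $\CC(F,\X)$. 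But under the Yoneda identification, $\CC(F,\X)$ is $\Gamma(F)$, which is a strict tangent morphism of tangent categories and therefore preserves precisely these limits; hence the preservation condition holds, and $\X$ carries a genuine tangentad structure.
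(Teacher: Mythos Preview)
Your approach is essentially the same as the paper's: both invoke the $2$-categorical (enriched) Yoneda lemma to transport the tangent data back and forth. Your version is considerably more detailed than the paper's one-line appeal to ``the enriched version of the Yoneda lemma''.

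One imprecision worth flagging: in the last paragraph you write that $\Gamma(F)$ ``is a strict tangent morphism of tangent categories and therefore preserves precisely these limits''. By the paper's Definition~\ref{definition:tangent-morphisms}, strictness only says that the distributive law $\alpha\colon F\T\Rightarrow\T'F$ is the identity; it does \emph{not} automatically force the induced comparison $\alpha_n\colon F\T_n\Rightarrow\T_n'F$ to be the identity (or even invertible), so preservation of the $n$-fold pullbacks and of the vertical-lift pullback is not a formal consequence of strictness alone. What you actually need is that the entire Leung functor---in particular the values on $W_n$---is $2$-natural under precomposition, i.e.\ that each $\bar\T_{n,\X'}$ on $\CC(\X',\X)$ is literally postcomposition by a fixed $\T_n\in\End(\X)$. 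This is true for the forward-direction $2$-functor $[-\|\X,\TT]$ (where $\Leung[\bar\TT](W_n)$ is defined as postcomposition with $\T_n$, so precomposition commutes with it on the nose), and it is the implicit hypothesis in the converse; but it is a slightly stronger condition than ``$\Gamma(F)$ is strict'' in the paper's sense. Once that is granted, your Yoneda argument goes through: the pullback $\bar\T_n(F)$ in $\Gamma(\X')$ is then $\T_n\circ F$, which is exactly the pointwise preservation clause in Definition~\ref{definition:tangentad}.
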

\begin{proof}
Given a corepresentable $2$-functor $\CC[-\|\X]\colon\CC^\op\to\TngCat$, using the enriched version of the Yoneda lemma, one can show that $\X$ comes equipped with a tangent structure.
\end{proof}


\section{The formal theory of vector fields}
\label{section:vector-fields}
A vector field in differential geometry consists of a smooth assignment, for each point of a finite-dimensional smooth manifold, of a tangent vector of the tangent space at that point. In~\cite{rosicky:tangent-cats} and in~\cite{cockett:tangent-cats}, vector fields were introduced in tangent category theory and employed in~\cite{cockett:differential-equations} to introduce ordinary differential equations in tangent category theory.
\par In this section, we extend the notion of vector fields to the formal theory of tangentads. For starters, in Section~\ref{subsection:tangent-category-vector-fields} we recall the notion of vector fields in tangent category theory, the commutative monoid and the Lie algebra structures of vector fields on an object of a tangent category (with negatives), and the tangent category of vector fields as presented in~\cite{cockett:differential-equations}.
\par In Section~\ref{subsection:universal-property-vector-fields}, we characterize the universal property of vector fields in the context of tangent category theory and harness this characterization to define vector fields in the formal context of tangentads.
\par In Section~\ref{subsection:structures-vector-fields}, we prove the functoriality of the construction of vector fields, show that vector fields carry a monad structure, and recover the commutative monoid and the Lie algebra structures in the formal context.

\subsection{Vector fields in tangent category theory}
\label{subsection:tangent-category-vector-fields}
A vector field on an object $M$ in a tangent category $(\X,\TT)$ is a section $v\colon M\to\T M$ of the projection $p\colon\T M\to M$ of the tangent bundle on $M$, namely, the following diagram commutes:
\begin{equation*}
\begin{tikzcd}
M & {\T M} \\
& M
\arrow["v", from=1-1, to=1-2]
\arrow[equals, from=1-1, to=2-2]
\arrow["p", from=1-2, to=2-2]
\end{tikzcd}
\end{equation*}
Vector fields on an object $M$ in a tangent category $(\X,\TT)$ form a commutative monoid $\VF(\X,\TT;M)$ whose zero element is the zero vector field $z\colon M\to\T M$ and the sum $u+v$ of two vector fields $u,v\colon M\to\T M$ is the vector field:
\begin{align*}
&u+v\colon M\xrightarrow{\<u,v\>}\T_2M\xrightarrow{s}\T M
\end{align*}
Moreover, when the tangent category $(\X,\TT)$ has negatives, $\VF(\X,\TT;M)$ comes equipped with a Lie algebra structure whose Lie bracket $[u,v]$ between two vector fields $u,v\colon M\to\T M$ is the vector field
\begin{align*}
&[u,v]\=\{\T v\o u-c\o\T u\o v\}
\end{align*}
where
\begin{align*}
&f-g\colon X\xrightarrow{\<f,n_\T\o g\>}\T_2\T M\xrightarrow{s_{\T M}}\T^2M
\end{align*}
for two morphisms $f,g\colon X\to\T^2M$, and where $\{h\}\colon X\to\T M$ is defined by the universal property of the vertical lift as the unique morphism making the following diagram commute
\begin{equation*}
\begin{tikzcd}
X &&& {\T^2M} \\
{\T^2M} & {\T_2M} & {\T\T_2M} & {\T^2M} \\
{\T M} & M && {\T M}
\arrow["h", from=1-1, to=1-4]
\arrow["h"', from=1-1, to=2-1]
\arrow["{\left\<\{h\},p_\T\o h\right\>}"{description}, dashed, from=1-1, to=2-2]
\arrow[equals, from=1-4, to=2-4]
\arrow["{p_\T}"', from=2-1, to=3-1]
\arrow["{l\times_{\T M}z_\T}", from=2-2, to=2-3]
\arrow["{\pi_1p}"', from=2-2, to=3-2]
\arrow["{\T s}", from=2-3, to=2-4]
\arrow["{\T p}", from=2-4, to=3-4]
\arrow["p"', from=3-1, to=3-2]
\arrow[""{name=0, anchor=center, inner sep=0}, "z"', from=3-2, to=3-4]
\arrow["\lrcorner"{anchor=center, pos=0.125}, draw=none, from=2-2, to=0]
\end{tikzcd}
\end{equation*}
provided that:
\begin{align*}
&\T p\o h=z\o p\o p_\T\o h
\end{align*}
\cite[Proposition~2.10]{cockett:differential-equations} establishes that vector fields of a tangent category $(\X,\TT)$ form a new tangent category $\VF(\X,\TT)$. We briefly recall this construction:
\begin{description}
\item[Objects] An object of $\VF(\X,\TT)$ is a pair $(M,v)$ formed by an object $M$ of $\X$ together with a vector field $v\colon M\to\T M$;

\item[Morphisms] A morphism $f\colon(M,v)\to(N,u)$ of $\VF(\X,\TT)$ is a morphism $f\colon M\to N$ of $\X$ for which the following diagram commutes:
\begin{equation*}
\begin{tikzcd}
{\T M} & {\T N} \\
M & N
\arrow["{\T f}", from=1-1, to=1-2]
\arrow["v", from=2-1, to=1-1]
\arrow["f"', from=2-1, to=2-2]
\arrow["u"', from=2-2, to=1-2]
\end{tikzcd}
\end{equation*}

\item[Tangent bundle functor] The tangent bundle functor $\T^\VF$ sends a pair $(M,v)$ to $(\T M,v_\T)$, where
\begin{align*}
&v_\T\colon\T M\xrightarrow{\T v}\T^2M\xrightarrow{c}\T^2M
\end{align*}
and a morphism $f\colon(M,v)\to(N,u)$ of vector fields to the morphism $\T f\colon(\T M,v_\T)\to(\T N,u_\T)$;

\item[structural natural transformations] The structural natural transformations of $\VF(\X,\TT)$ are the same as the ones of $(\X,\TT)$.
\end{description}
$\VF$ extends to a functor, as established by the next lemma.

\begin{lemma}
\label{lemma:VF-functoriality-tangent-cats}
The assignment $\VF$ which sends a tangent category $(\X,\TT)$ to the tangent category $\VF(\X,\TT)$ of vector fields of $(\X,\TT)$ extends to a $2$-functor
\begin{align*}
&\VF\colon\TngCat\to\TngCat
\end{align*}
which sends a lax tangent morphism $(F,\alpha)\colon(\X,\TT)\to(\X',\TT')$ to the lax tangent morphism
\begin{align*}
&\VF(F,\alpha)\colon\VF(\X,\TT)\to\VF(\X',\TT')
\end{align*}
which sends a vector field $(M,v\colon M\to\T M)$ to a vector field $(FM,v_{FM})$ where
\begin{align*}
&v_{FM}\colon FM\xrightarrow{Fv}F\T M\xrightarrow{\alpha_M}\T'FM
\end{align*}
and whose distributive law coincides with $\alpha$. Moreover, $\VF(F,\alpha)$ sends a morphism $f\colon(M,v)\to(N,u)$ of vector fields to $Ff\colon(FM,v_{FM})\to(FN,u_{FN})$. Finally, given a $2$-morphism $\varphi\colon(F,\alpha)\Rightarrow(G,\beta)\colon(\X,\TT)\to(\X',\TT')$,
\begin{align*}
&\VF(\varphi)\colon\VF(F,\alpha)\Rightarrow\VF(G,\beta)
\end{align*}
is simply $\varphi$.
\end{lemma}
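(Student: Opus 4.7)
The plan is to verify in turn that (i) $\VF(F,\alpha)$ is well-defined on objects and morphisms, (ii) the components of $\alpha$ lift to a distributive law for $\VF(F,\alpha)$, (iii) the lax tangent morphism axioms are inherited from those of $(F,\alpha)$, and finally (iv) the 2-functoriality, including the behaviour on tangent 2-morphisms. Each of these amounts to a short diagram chase using the defining axioms of a lax tangent morphism together with naturality of $\alpha$.

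First I would check that $v_{FM}\=\alpha_M\o Fv$ really is a vector field on $FM$. By the additivity axiom of the lax distributive law, $p'_{FM}\o\alpha_M=Fp_M$, hence $p'_{FM}\o v_{FM}=Fp_M\o Fv=F(p_M\o v)=\id_{FM}$. Next, given a vector field morphism $f\colon(M,v)\to(N,u)$, naturality of $\alpha$ at $\T f$ gives $\alpha_N\o F\T f=\T'Ff\o\alpha_M$, so
\begin{align*}
\T'Ff\o v_{FM}=\T'Ff\o\alpha_M\o Fv=\alpha_N\o F\T f\o Fv=\alpha_N\o Fu\o Ff=u_{FN}\o Ff,
\end{align*}
showing $Ff$ is a morphism of vector fields. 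These two computations deal with (i).

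For (ii), I would exhibit, at each object $(M,v)$, the morphism $\alpha_M\colon\VF(F,\alpha)\T^\VF(M,v)\to\T'^\VF\VF(F,\alpha)(M,v)$ and show it is itself a vector-field morphism from $(F\T M,(v_\T)_{F\T M})$ to $(\T'FM,(v_{FM})_{\T'})$. Unfolding the definitions, this reduces to the identity
\begin{align*}
\T'\alpha_M\o\alpha_{\T M}\o Fc\o F\T v=c'\o\T'\alpha_M\o\alpha_{\T M}\o F\T v,
\end{align*}
which follows by applying naturality of $\alpha$ (to move $\T'Fv\o\alpha_M$ to $\alpha_{\T M}\o F\T v$) and then invoking the compatibility of $\alpha$ with the canonical flip. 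Naturality of the underlying $\alpha$ also ensures that these components assemble into a natural transformation with the correct domain and codomain.

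For (iii), the additivity axioms, the compatibility with the vertical lift, and the compatibility with the canonical flip of the lifted distributive law all hold by construction: both sides of each axiom have the same underlying 1- and 2-cell data as the corresponding axiom for $(F,\alpha)$ in $\CC$, since $\VF(F,\alpha)$ acts as $F$ at the level of underlying morphisms and the structural 2-cells $p,z,s,l,c$ of $\VF(\X,\TT)$ and $\VF(\X',\TT')$ are inherited from $(\X,\TT)$ and $(\X',\TT')$. This step is really a transport along the forgetful 2-functor $\U\colon\TngCat\to\Cat$, which is faithful on tangent axioms. For (iv), given a tangent 2-morphism $\varphi\colon(F,\alpha)\Rightarrow(G,\beta)$, I would check that each component $\varphi_M\colon FM\to GM$ is a morphism of vector fields $(FM,v_{FM})\to(GM,v_{GM})$; this is exactly the tangent-naturality square $\beta_M\o\varphi_{\T M}=\T'\varphi_M\o\alpha_M$ composed with $Fv$. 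Strict preservation of identities, composition, and whiskering of 2-cells is then immediate from the pointwise definition. The only place I expect any subtlety at all is step (ii), where one must correctly identify which instance of naturality of $\alpha$ is being used before invoking the canonical-flip axiom; all other verifications are formal consequences of the lax tangent morphism axioms.
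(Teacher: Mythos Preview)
Your proposal is correct and follows essentially the same approach as the paper's proof: the paper likewise verifies that $v_{FM}$ is a vector field via $p'\o\alpha=Fp$, that $Ff$ is a vector-field morphism via naturality of $\alpha$, and that $\alpha$ itself lifts to a vector-field morphism using naturality of $\alpha$ together with the canonical-flip compatibility axiom, before treating $2$-morphisms in a single line. Your write-up is in fact more thorough than the paper's, which omits your step~(iii) entirely; the only small imprecision is in step~(iv), where one also needs ordinary naturality of $\varphi$ at $v$ (to pass from $\varphi_{\T M}\o Fv$ to $Gv\o\varphi_M$) before invoking the tangent-naturality square.
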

\begin{proof}
For starters, notice that since $p'_F\o\alpha=F p$:
\begin{align*}
&p'_{FM}\o v_{FM}=p'_{FM}\o\alpha_M\o Fv=Fp_M\o Fv=\id_{FM}
\end{align*}
Therefore, $v_{FM}$ is a vector field of $(\X',\TT')$. To prove that $Ff$ is a morphism of vector fields, consider the following diagram:
\begin{equation*}
\begin{tikzcd}
FM & {F\T M} & {\T'FM} \\
FN & {F\T N} & {\T'FN}
\arrow["Fv", from=1-1, to=1-2]
\arrow["Ff"', from=1-1, to=2-1]
\arrow["\alpha", from=1-2, to=1-3]
\arrow["{F\T f}", from=1-2, to=2-2]
\arrow["{\T'Ff}", from=1-3, to=2-3]
\arrow["Fv"', from=2-1, to=2-2]
\arrow["\alpha"', from=2-2, to=2-3]
\end{tikzcd}
\end{equation*}
The left square diagram commutes since $f$ is a morphism of vector fields, and the right square diagram commutes by naturality of $\alpha$. Therefore, $Ff$ is a morphism of vector fields. Let us now prove that $\alpha\colon(FM,v_{FM})\to(FN,u_{FN})$ is a morphism of vector fields. This amounts to prove that the following diagram commutes:
\begin{equation*}
\begin{tikzcd}
{\T'FM} & {\T'F\T M} & {{\T'}^2FM} & {{\T'}^2FM} \\
{F\T M} & {F\T^2M} & {F\T^2M} & {\T'F\T M}
\arrow["{\T'Fv}", from=1-1, to=1-2]
\arrow["{\T'\alpha}", from=1-2, to=1-3]
\arrow["{c'_{FM}}", from=1-3, to=1-4]
\arrow["{\alpha_M}", from=2-1, to=1-1]
\arrow["{F\T v}"', from=2-1, to=2-2]
\arrow["{\alpha_{\T M}}"', from=2-2, to=1-2]
\arrow["{Fc_M}"', from=2-2, to=2-3]
\arrow["{\alpha_{\T M}}"', from=2-3, to=2-4]
\arrow["{\T'\alpha_M}"', from=2-4, to=1-4]
\end{tikzcd}
\end{equation*}
However, the left square diagram commutes by naturality of $\alpha$ and the right rectangular diagram commutes by the compatibility between $\alpha$ and $c$. Finally, each tangent natural transformation $\varphi\colon(F,\beta)\Rightarrow(G,\beta)$ is sent to a tangent natural transformation $\VF(\varphi)=\varphi$.
\end{proof}

\subsection{The universal property of vector fields}
\label{subsection:universal-property-vector-fields}
This section aims to characterize the correct universal property of the tangent category $\VF(\X,\TT)$ of vector fields of a tangent category $(\X,\TT)$. For starters, consider the forgetful functor $\U\colon\VF(\X,\TT)\to(\X,\TT)$ which sends a pair $(M,v)$ to the underlying object $M$ and a morphism $f\colon(M,v)\to(N,u)$ of vector fields to the underlying morphism $f\colon M\to N$. $\U$ strictly preserves the tangent structures; therefore, it can be regarded as an object of the Hom-tangent category $[\VF(\X,\TT)\|\X,\TT]$. One can also define a natural transformation $\Univ v\colon\U\to\bar\T\U$
\begin{align*}
&\Univ v\colon\U(M,v)=M\xrightarrow{v}\T M=(\bar\T\U)(M,v)
\end{align*}
which picks out the vector field from each pair $(M,v)\in\VF(\X,\TT)$, where $\bar\T\U$ is the tangent bundle functor $\bar\T$ of $[\VF(\X,\TT)\|\X,\TT]$ applied to the object $\U$. The naturality of $\Univ v$ corresponds to the commutativity of the following diagrams
\begin{equation*}
\begin{tikzcd}
{\U(M,v)} &&& {(\bar\T\U)(M,v)} \\
& M & {\T M} \\
& N & {\T N} \\
{\U(N,u)} &&& {\bar\T\U(N,u)}
\arrow["{\Univ v_{(M,v)}}", from=1-1, to=1-4]
\arrow[equals, from=1-1, to=2-2]
\arrow["{\U f}"', from=1-1, to=4-1]
\arrow[equals, from=1-4, to=2-3]
\arrow["{\bar\T\U f}", from=1-4, to=4-4]
\arrow["v", from=2-2, to=2-3]
\arrow["f"', from=2-2, to=3-2]
\arrow["{\T f}", from=2-3, to=3-3]
\arrow["u"', from=3-2, to=3-3]
\arrow[equals, from=4-1, to=3-2]
\arrow["{\Univ v_{(N,u)}}"', from=4-1, to=4-4]
\arrow[equals, from=4-4, to=3-3]
\end{tikzcd}
\end{equation*}
for each morphism $f\colon(M,v)\to(N,u)$, which is precisely the condition a morphism $f\colon M\to N$ must satisfy in order to be a morphism of vector fields. Furthermore, $\Univ v\colon\U\Rightarrow\bar\T\U$ is compatible with the distributive laws of the two (strong) tangent morphisms $\U$ and $\T\U$. Indeed, since $c^2=\id_{\T^2}$, the following diagram commutes
\begin{equation*}
\begin{tikzcd}
{\U\T^\VF(M,v)} &&& {\bar\T\U(M,v)} \\
& {\T M} & {\T M} \\
& {\T^2M} \\
& {\T^2M} & {\T^2M} \\
{\bar\T\U\T^\VF(M,v)} & {\bar\T\bar\T\U(M,v)} && {\bar\T\bar\T\U(M,v)}
\arrow["\alpha", from=1-1, to=1-4]
\arrow[equals, from=1-1, to=2-2]
\arrow["{\Univ v_{\T^\VF(M,v)}}"', from=1-1, to=5-1]
\arrow[equals, from=1-4, to=2-3]
\arrow["{\bar\T\Univ v_{(M,v)}}", from=1-4, to=5-4]
\arrow[equals, from=2-2, to=2-3]
\arrow["{\T v}"', from=2-2, to=3-2]
\arrow["{\T v}", from=2-3, to=4-3]
\arrow["c"', from=3-2, to=4-2]
\arrow["c"', from=4-2, to=4-3]
\arrow[equals, from=5-1, to=4-2]
\arrow["{\bar\T\alpha}"', from=5-1, to=5-2]
\arrow["{\bar c_\U}"', from=5-2, to=5-4]
\arrow[equals, from=5-4, to=4-3]
\end{tikzcd}
\end{equation*}
where $\alpha$ denotes the strict distributive law of $\U$, namely the identity. Finally, $\Univ v\colon\U\to\bar\T\U$ defines a vector field in $[\VF(\X,\TT)\|\X,\TT]$ since:
\begin{equation*}
\begin{tikzcd}
{\U(M,v)} &&& {\bar\T\U(M,v)} \\
& M & {\T M} \\
&& M \\
&& {\U(M,v)}
\arrow["{\hat v_{(M,v)}}", from=1-1, to=1-4]
\arrow[equals, from=1-1, to=2-2]
\arrow[curve={height=30pt}, equals, from=1-1, to=4-3]
\arrow[equals, from=1-4, to=2-3]
\arrow["{\bar p_{\U(M,v)}}", from=1-4, to=4-3]
\arrow["v", from=2-2, to=2-3]
\arrow[equals, from=2-2, to=3-3]
\arrow["p", from=2-3, to=3-3]
\arrow[equals, from=4-3, to=3-3]
\end{tikzcd}
\end{equation*}
We recap our findings in the following proposition.

\begin{proposition}
\label{proposition:universal-vector-field}
The forgetful functor $\U\colon\VF(\X,\TT)\to(\X,\TT)$ is a strict tangent morphism. Furthermore, the tangent natural transformation $\Univ v\colon\U\to\bar\T\U$ which picks out the vector field out of each pair $(M,v)\in\VF(\X,\TT)$ defines a vector field on $\U$ in the Hom-tangent category $[\VF(\X,\TT)\|\X,\TT]$.
\end{proposition}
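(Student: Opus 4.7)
The plan is to organize the four verifications already sketched in the discussion preceding the statement; each of them reduces to a construction already recorded in Section~\ref{subsection:tangent-category-vector-fields} or to the defining condition of a morphism of vector fields, so no new calculation is really required.

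First, I would check that $\U$ is a strict tangent morphism. Inspecting the definition of $\VF(\X,\TT)$, one sees that $\U\T^\VF(M,v)=\U(\T M,v_\T)=\T M=\T\U(M,v)$ and $\U\T^\VF(f)=\T f=\T\U(f)$, while the structural natural transformations of $\VF(\X,\TT)$ are literally those of $(\X,\TT)$. Hence $\U\o\T^\VF=\T\o\U$ on the nose, and $\U$ intertwines $p,z,s,l,c$ strictly, so the distributive law of $\U$ can be taken to be the identity.

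Second, I would verify that $\Univ v\colon\U\Rightarrow\bar\T\U$ is a natural transformation. For a morphism $f\colon(M,v)\to(N,u)$ of $\VF(\X,\TT)$, naturality at $f$ demands $\T f\o v=u\o f$, which is precisely the definition of $f$ being a morphism of vector fields, as noted in the first commutative square of this subsection.

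Third, I would verify that $\Univ v$ is a tangent $2$-morphism, i.e.\ compatible with the distributive laws of $\U$ and $\bar\T\U=(\T,c)\o\U$. Since the former is the identity by the first step and the latter is the canonical flip $c$, the required hexagon reduces at $(M,v)$ to the single identity $c\o\T v=v_\T$, which is exactly the definition of the tangent bundle functor of $\VF(\X,\TT)$; this is the content of the large diagram preceding the proposition statement.

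Fourth, I would observe that being a vector field on $\U$ in $[\VF(\X,\TT)\|\X,\TT]$ means being a section of the pointwise projection $\bar p_\U$, and componentwise this reads $p_M\o v=\id_M$, which is the defining property of each $v$. No step here is genuinely hard: the main substantive point, which the rest of the section will exploit, lies not in the proof but in recognising that $(\U,\Univ v)$ is precisely the universal vector field on $\U$.
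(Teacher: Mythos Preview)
Your proposal is correct and follows exactly the same approach as the paper: the proposition is explicitly a recap of the discussion preceding it, and your four verifications (strictness of $\U$, naturality of $\Univ v$, the tangent $2$-morphism condition via $v_\T=c\o\T v$ and $c^2=\id$, and the section property $p\o v=\id$) match the paper's four diagrams point for point. There is nothing to add.
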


Our goal is to characterize the universal property enjoyed by the vector field $\Univ v$ on $\U$ in the Hom-tangent category $[\VF(\X,\TT)\|\X,\TT]$. For starters, let us unwrap the definition of a vector field in the Hom-tangent category $[\X',\TT'\|\X'',\TT'']$ for two arbitrary tangent categories:
\begin{description}
\item[Base object] The base object of a vector field in $[\X',\TT'\|\X'',\TT'']$ consists of a lax tangent morphism $(G,\beta)\colon(\X',\TT')\to(\X'',\TT'')$;

\item[Vector field] The vector field consists of a collection
\begin{align*}
&u_A\colon GA\to\T''GA
\end{align*}
of vector fields in $(\X'',\TT'')$ for each $A\in\X'$. Such a collection must be natural in $A$, and must be compatible with the distributive law of $(G,\beta)$, namely the following diagram commutes:
\begin{equation}
\label{diagram:beta-compatibility-vector-field-u}
\begin{tikzcd}
{G\T'A} && {\T''GA} \\
{\T''G\T'A} & {{\T''}^2GA} & {{\T''}^2GA}
\arrow["{\beta_A}", from=1-1, to=1-3]
\arrow["{u_{\T'A}}"', from=1-1, to=2-1]
\arrow["{\T''u_{GA}}", from=1-3, to=2-3]
\arrow["{\beta_{\T'A}}"', from=2-1, to=2-2]
\arrow["{c''_{GA}}"', from=2-2, to=2-3]
\end{tikzcd}
\end{equation}
\end{description}
A morphism of vector fields $\varphi\colon(G,\beta;u)\to(G',\beta';u')$ is a tangent natural transformation $\varphi\colon(G,\beta)\Rightarrow(G',\beta')$, compatible with the vector fields as follows:
\begin{equation*}
\begin{tikzcd}
{\T''GA} & {\T''G'A} \\
GA & {G'A}
\arrow["{\T''\varphi_A}", from=1-1, to=1-2]
\arrow["{u_A}", from=2-1, to=1-1]
\arrow["{\varphi_A}"', from=2-1, to=2-2]
\arrow["{u'_A}"', from=2-2, to=1-2]
\end{tikzcd}
\end{equation*}
Finally, $\VF[\X',\TT'\|\X'',\TT'']$ also carries a tangent structure whose tangent bundle functor sends a vector field $(G,\beta;u)$ to $(\bar\T(G,\beta),u_{\bar\T})$, where:
\begin{align*}
&(u_{\bar\T})_A\colon\T''GA\xrightarrow{\T''u_A}{\T''}^2GA\xrightarrow{c''_{GA}}{\T''}^2GA
\end{align*}
This characterization extends readily to abstract tangentads.
\par For each tangent category $(\X',\TT')$, a vector field $u\colon(G,\beta)\to\bar\T(G,\beta)$ in the Hom-tangent category $[\X'',\TT''\|\X''',\TT''']$ induces a functor
\begin{align*}
&\Gamma_ u\colon[\X',\TT'\|\X'',\TT'']\to\VF[\X',\TT'\|\X''',\TT''']
\end{align*}
which sends a lax tangent morphism $(H,\gamma)\colon(\X',\TT')\to(\X'',\TT'')$ to the vector field
\begin{align*}
&\Gamma_ u(H,\gamma)\colon(G,\beta)\o(H,\gamma)\xrightarrow{u_{(H,\gamma)}}(\bar\T(G,\beta))\o(H,\gamma)=\bar\T((G,\beta)\o(H,\gamma))
\end{align*}
defined on the lax tangent morphism $(G,\beta)\o(H,\gamma)$ obtained by postcomposing $(H,\gamma)$ by $(G,\beta)$. Concretely, the vector field $\Gamma_ u(H,\gamma)$ is the natural transformation:
\begin{equation*}
\begin{tikzcd}
{(\X',\TT')} & {(\X'',\TT'')} & {(\X''',\TT''')}
\arrow["{(H,\gamma)}", from=1-1, to=1-2]
\arrow[""{name=0, anchor=center, inner sep=0}, "{(G,\beta)}"', bend right, from=1-2, to=1-3]
\arrow[""{name=1, anchor=center, inner sep=0}, "{\bar\T(G,\beta)}", bend left, from=1-2, to=1-3]
\arrow["u"', shorten <=5pt, shorten >=5pt, Rightarrow, from=0, to=1]
\end{tikzcd}
\end{equation*}
Furthermore, the functor $\Gamma_ u$ sends a morphism $\varphi\colon(H,\gamma)\to(H',\gamma')$ of lax tangent morphisms to a morphism of vector fields:
\begin{align*}
&\Gamma_ u(\varphi)\colon\Gamma_ u(H,\gamma)=u_{(H,\gamma)}\xrightarrow{u(\varphi)}u_{(H',\gamma')}
\end{align*}
Concretely, this corresponds to the natural transformation:
\begin{equation*}
\begin{tikzcd}
{(\X',\TT')} & {(\X'',\TT'')} & {(\X''',\TT''')}
\arrow[""{name=0, anchor=center, inner sep=0}, "{(H,\gamma)}"', bend right, from=1-1, to=1-2]
\arrow[""{name=1, anchor=center, inner sep=0}, "{(H',\gamma')}", bend left, from=1-1, to=1-2]
\arrow[""{name=2, anchor=center, inner sep=0}, "{(G,\beta)}"', bend right, from=1-2, to=1-3]
\arrow[""{name=3, anchor=center, inner sep=0}, "{\bar\T(G,\beta)}", bend left, from=1-2, to=1-3]
\arrow["\varphi"', shorten <=5pt, shorten >=5pt, Rightarrow, from=0, to=1]
\arrow["u"', shorten <=5pt, shorten >=5pt, Rightarrow, from=2, to=3]
\end{tikzcd}
\end{equation*}
The functor $\Gamma_ u$ also comes with a distributive law
\begin{align*}
&\Gamma_\beta(H,\gamma)\colon\Gamma_ u(\bar\T(H,\gamma))\to\T^\VF(\Gamma_ u(H,\gamma))
\end{align*}
so defined:
\begin{align*}
&G\T''HA\xrightarrow{\beta_{HA}}\T'''GHA
\end{align*}
To prove that $\Gamma_\beta$ is a morphism of vector fields is to show the following diagram commutes:
\begin{equation*}
\begin{tikzcd}
{\T'''G\T''HA} & {{\T'''}^2GHA} \\
& {{\T'''}^2GHA} \\
{G\T''HA} & {\T'''GHA}
\arrow["{\T'''\beta_{HA}}", from=1-1, to=1-2]
\arrow["{c'''_{GHA}}", from=2-2, to=1-2]
\arrow["{u_{\T''HA}}"', from=3-1, to=1-1]
\arrow["{\Gamma_ u(\T''(H,\gamma))_A}", bend left=50, from=3-1, to=1-1]
\arrow["{\beta_{HA}}"', from=3-1, to=3-2]
\arrow["{\T^\VF(\Gamma_ u(H,\gamma))_A}"', bend right=50, from=3-2, to=1-2]
\arrow["{\T'''u_{HA}}", from=3-2, to=2-2]
\end{tikzcd}
\end{equation*}
However, by the commutativity of Diagram~\eqref{diagram:beta-compatibility-vector-field-u} and by ${c'''}^2=\id_{{\T'''}^2}$, this diagram commutes. The compatibility of $\beta$ with the tangent structures of $(\X'',\TT'')$ and $(\X''',\TT''')$ makes $(\Gamma_ u,\Gamma_\beta)$ into a lax tangent morphism:
\begin{align*}
&\Gamma_{(G,\beta;u)}\=(\Gamma_ u,\Gamma_\beta)\colon[\X',\TT'\|\X'',\TT'']\to\VF[\X',\TT'\|\X''',\TT''']
\end{align*}
We recap our findings in the next lemma.

\begin{lemma}
\label{lemma:induced-lax-tangent-morphism-from-vector-fields}
A vector field $(G,\beta;u)$ in the Hom-tangent category $[\X'',\TT''\|\X''',\TT''']$ induces a lax tangent morphism
\begin{align*}
&\Gamma_{(G,\beta;u)}\=(\Gamma_ u,\Gamma_\beta)\colon[\X',\TT'\|\X'',\TT'']\to\VF[\X',\TT'\|\X''',\TT''']
\end{align*}
for each tangentad $(\X',\TT')$. Furthermore, $\Gamma_{(G,\beta;u)}$ is natural in $(\X',\TT')$. Concretely, given a lax tangent morphism $(F,\alpha)\colon(\X,\TT)\to(\X',\TT')$, the following diagram commutes in the $2$-category of tangent categories:
\begin{equation*}
\begin{tikzcd}
{[\X',\TT'\|\X'',\TT'']} & {\VF[\X',\TT'\|\X''',\TT''']} \\
{[\X,\TT\|\X'',\TT'']} & {\VF[\X,\TT\|\X''',\TT''']}
\arrow["{\Gamma_{(G,\beta;u)}}", from=1-1, to=1-2]
\arrow["{[F,\alpha\|\X'',\TT'']}"', from=1-1, to=2-1]
\arrow["{\VF[F,\alpha\|\X''',\TT''']}", from=1-2, to=2-2]
\arrow["{\Gamma_{(G,\beta;u)}}"', from=2-1, to=2-2]
\end{tikzcd}
\end{equation*}
Furthermore, when the base tangent morphism $(G,\beta)$ is strong (strict), the induced tangent morphism $\Gamma_{(G,\beta;u)}$ is also strong (strict).
\end{lemma}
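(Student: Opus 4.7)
The plan is to split the proof into four pieces: (a) functoriality of $\Gamma_u$, (b) the lax tangent morphism axioms for $(\Gamma_u,\Gamma_\beta)$, (c) naturality in $(\X',\TT')$, and (d) the strong/strict claim. The text already defines $\Gamma_u$ and $\Gamma_\beta$ and verifies the one non-obvious step, namely that each component $\Gamma_\beta(H,\gamma)$ is itself a morphism of vector fields; so the remaining work is bookkeeping built on that.

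For (a), $\Gamma_u(\varphi)$ is by construction the horizontal composite $u\*\varphi$ of $u$ with $\varphi$ in the $2$-category of tangentads. Thus preservation of identities and of composition is just the middle-four interchange law for horizontal and vertical composition, so $\Gamma_u$ is a functor from $[\X',\TT'\|\X'',\TT'']$ to $\VF[\X',\TT'\|\X''',\TT''']$.

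For (b), by Proposition~\ref{proposition:hom-tangent-categories} the tangent structure on $[\X',\TT'\|\X'',\TT'']$ is defined pointwise by postcomposition with $(\T'',c'')$, and similarly for the tangent structure induced on $\VF[\X',\TT'\|\X''',\TT''']$ (whose tangent bundle, as recalled just above the lemma, sends a vector field $(G',\beta';u')$ to $(\bar\T(G',\beta'),u'_{\bar\T})$ with $u'_{\bar\T}=c'''\o\T'''u'$). Each tangent-morphism axiom for $(\Gamma_u,\Gamma_\beta)$ — additivity against $p$, $z$, $s$, and the compatibilities with $l$ and $c$ — therefore unfolds, at a test object $(H,\gamma)$ evaluated at $A\in\X'$, into a diagram whose arrows are obtained by whiskering the corresponding axiom for $\beta\colon G\T''\Rightarrow\T'''G$ by $H$ at $A$. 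Since those axioms hold for $(G,\beta)$ by hypothesis, the whiskered diagrams commute. I expect this to be the main obstacle only in the sense of needing to write out the axioms carefully; no new idea beyond ``reduce pointwise to the corresponding axiom for $\beta$'' is required.

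For (c), naturality in $(\X',\TT')$ says that for $(F,\alpha)\colon(\X,\TT)\to(\X',\TT')$ the two ways around the square agree, both as underlying functors and as distributive laws. On an object $(H,\gamma)$ the image $\Gamma_u((H,\gamma)\o(F,\alpha))$ is the vector field $u$ whiskered with $(H,\gamma)\o(F,\alpha)$, while $\VF[F,\alpha\|\X''',\TT'''](\Gamma_u(H,\gamma))$ precomposes the vector field $u\*(H,\gamma)$ with $(F,\alpha)$; these agree by associativity of horizontal composition. The same associativity identifies the two distributive laws, which are both $\beta$ whiskered with $H$ and $F$. The argument on morphisms is identical.

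For (d), suppose $(G,\beta)$ is strong, so $\beta$ is invertible. Since $\Gamma_\beta(H,\gamma)$ is pointwise given by whiskering $\beta$ by $H$, it is pointwise invertible with inverse obtained by whiskering $\beta^{-1}$, and is therefore a $2$-isomorphism in $\CC$; hence $\Gamma_{(G,\beta;u)}$ is strong. If $\beta=\id$, then $\Gamma_\beta=\id$ and $\Gamma_{(G,\beta;u)}$ is strict, completing the proof.
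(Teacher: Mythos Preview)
Your proposal is correct and follows essentially the same approach as the paper: both reduce the lax tangent morphism axioms for $(\Gamma_u,\Gamma_\beta)$ to the corresponding axioms already satisfied by $(G,\beta)$, both obtain naturality in $(\X',\TT')$ from associativity of horizontal composition, and both read off the strong/strict claim directly from the fact that $\Gamma_\beta$ is given pointwise by whiskering $\beta$. The paper's proof is in fact terser than yours, deferring most of (a) and (b) to the discussion preceding the lemma and leaving the details to the reader.

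One small remark: in part (b) you write ``at a test object $(H,\gamma)$ evaluated at $A\in\X'$'', but the lemma is stated for tangentads in an arbitrary $2$-category $\CC$, where $\X'$ need not have objects to evaluate at. The paper handles this by noting that the argument given for tangent categories extends to general tangentads ``by removing the dependency of $u$ on the object $A$ and replacing natural transformations with $2$-morphisms of $\CC$''. Your actual argument---that the axioms for $\Gamma_\beta$ are obtained by whiskering the axioms for $\beta$---does not in fact require evaluation at an object, so simply dropping the phrase ``evaluated at $A\in\X'$'' fixes this.
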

\begin{proof}
To define the induced lax tangent morphism $\Gamma_{(G,\beta;u)}$, one can readily extend the argument used in the discussion above for tangent categories by removing the dependency of $u$ on the object $A$ and replacing natural transformations with $2$-morphisms of $\CC$. We leave the details to the reader. From the definition of the distributive law of $\Gamma_{(G,\beta;u)}$, one immediately concludes that if $(G,\beta)$ is strong, respectively strict, so is $\Gamma_{(G,\beta;u)}$. To prove that $(G,\beta;u)$ is natural in $(\X',\TT')$, consider two lax tangent morphisms $(F,\alpha)\colon(\X,\TT)\to(\X',\TT')$ and $(H,\gamma)\colon(\X',\TT')\to(\X'',\TT'')$. Thus:
\begin{align*}
&(\Gamma_{(G,\beta;u)}\o[F,\alpha\|\X',\TT'])(H,\gamma))=\Gamma_{(G,\beta;u)}((H,\gamma)\o(F,\alpha))\\
&\quad=\Gamma_{(G,\beta;u)}((H,\gamma))\o(F,\alpha)=\VF[F,\alpha\|\X''',\TT''']\o\Gamma_{(G,\beta;u)}(H,\gamma)
\end{align*}
Finally, since each $[F,\alpha\|\X'',\TT'']$ is a strict tangent morphism, $\Gamma_{(G,\beta;u)}$ is a tangent natural transformation.
\end{proof}

In particular, by Lemma~\ref{lemma:induced-lax-tangent-morphism-from-vector-fields}, the vector field $\Univ v$ of Proposition~\ref{proposition:universal-vector-field} induces a strict tangent natural transformation
\begin{align*}
&\Gamma_{(\U,\Univ v)}\colon[\X',\TT'\|\VF(\X,\TT)]\to\VF[\X',\TT'\|\X,\TT]
\end{align*}
natural in $(\X',\TT')$. In the following, we denote $\Gamma_{(\U,\Univ v)}$ simply by $\Gamma_{\Univ v}$.
\par We want to prove that $\Gamma_{\Univ v}$ is invertible. Consider another tangent category $(\X',\TT')$ together with a lax tangent morphism $(G,\beta)\colon(\X',\TT')\to(\X,\TT)$ and a vector field $u\colon(G,\beta)\to\bar\T(G,\beta)$ in the Hom-tangent category $[\X',\TT'\|\X,\TT]$. Since each $u_A\colon GA\to\T GA$ is a vector field in $(\X,\TT)$, we can define a functor
\begin{align*}
&\Lambda[u]\colon(\X',\TT')\to\VF(\X,\TT)
\end{align*}
which sends an object $A$ of $\X'$ to the vector field $u_A\colon GA\to\T GA$ and a morphism $f\colon A\to B$ of $Gf$. Notice that by naturality of $u$, $Gf\colon(GA,u_A)\to(GB,u_B)$ is a morphism of vector fields. Furthermore, $\Lambda[u]$ comes with a distributive law:
\begin{align*}
&\Lambda[\beta]\colon\Lambda[u](\T'A)=u_{\T'A}\xrightarrow{\beta}\T^\VF u_A=\T^\VF(\Lambda[u](A))
\end{align*}
Thanks to the commutativity of Diagram~\eqref{diagram:beta-compatibility-vector-field-u} and to $c^2=\id_{\T^2}$, $\beta\colon(G\T'A,u_{\T'A})\to(\T GA,c\o\T u_A)$ is a morphism of vector fields. Furthermore, since $\beta$ is compatible with the tangent structures of $(\X',\TT')$ and $(\X,\TT)$, $(\Lambda[u],\Lambda[\beta])$ becomes a lax tangent morphism.

\begin{lemma}
\label{lemma:universality-vector-fields}
Given a lax tangent morphism $(G,\beta)\colon(\X',\TT')\to(\X,\TT)$ and a vector field $u\colon(G,\beta)\to\bar\T(G,\beta)$ in the Hom-tangent category $[\X',\TT'\|\X,\TT]$, the functor $\Lambda[u]$ together with the distributive law $\Lambda[\beta]$ defines a lax tangent morphism:
\begin{align*}
&\Lambda[G,\beta;u]\=(\Lambda[u],\Lambda[\beta])\colon(\X',\TT')\to\VF(\X,\TT)
\end{align*}
\end{lemma}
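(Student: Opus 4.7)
The plan is to collect and refine the observations already made in the discussion preceding the lemma, then reduce the lax distributive law axioms for $(\Lambda[u],\Lambda[\beta])$ to the corresponding axioms that $(G,\beta)$ already satisfies.

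First I would check that $\Lambda[u]$ is a well-defined functor into $\VF(\X,\TT)$. On objects, each component $u_A\colon GA\to\T GA$ is a section of $p_{GA}$ because $u$ is a vector field in $[\X',\TT'\|\X,\TT]$, so $(GA,u_A)$ is a genuine object of $\VF(\X,\TT)$. On morphisms, the condition that $Gf\colon(GA,u_A)\to(GB,u_B)$ commutes with the vector fields is exactly the naturality square of $u$ at $f\colon A\to B$, so functoriality of $\Lambda[u]$ is inherited from functoriality of $G$. Next I would verify that each component $\Lambda[\beta]_A=\beta_A$ is a morphism of vector fields from $(G\T'A,u_{\T'A})$ to $\T^{\VF}(GA,u_A)=(\T GA,c_{GA}\circ\T u_A)$; the required equation $\T(\beta_A)\circ u_{\T'A}=c_{GA}\circ\T u_A\circ\beta_A$ is obtained by composing Diagram~\eqref{diagram:beta-compatibility-vector-field-u} with $c_{GA}$ on the left and using $c^2=\id_{\T^2}$. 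Naturality of $\Lambda[\beta]$ in $A$ then follows at once from naturality of $\beta$.

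It remains to check that $\Lambda[\beta]$ satisfies the lax distributive law axioms: additivity (compatibility with $p$, $z$, $s$) and compatibility with the vertical lift $l$ and the canonical flip $c$. Since the structural natural transformations of $\VF(\X,\TT)$ are inherited pointwise from those of $(\X,\TT)$ via the strict tangent forgetful functor $\U\colon\VF(\X,\TT)\to(\X,\TT)$, applying $\U$ to each axiom diagram for $(\Lambda[u],\Lambda[\beta])$ yields exactly the corresponding axiom diagram for $(G,\beta)$ in $\X$, which holds by hypothesis; since $\U$ is faithful, the original axioms in $\VF(\X,\TT)$ follow.

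The main obstacle is the verification that $\Lambda[\beta]_A$ lands in $\T^{\VF}(GA,u_A)$ as a morphism of vector fields, since the twist by $c$ in the definition of $\T^{\VF}$ and the appeal to $c^2=\id$ are the only non-formal ingredients in the whole argument; every other step is essentially bookkeeping, because all remaining structural data of $\VF(\X,\TT)$ agrees on the nose with that of $(\X,\TT)$.
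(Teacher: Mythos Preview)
Your proposal is correct and follows essentially the same approach as the paper's proof: the paper likewise isolates the three ingredients---naturality of $u$ makes $\Lambda[u]$ a well-defined functor, the tangent $2$-morphism compatibility of $u$ with $\beta$ (together with $c^2=\id$) makes each $\Lambda[\beta]_A$ a morphism of vector fields, and the compatibility of $\beta$ with the tangent structures yields the lax distributive law axioms. Your explicit use of the faithfulness of $\U$ to reduce the axiom check in $\VF(\X,\TT)$ to that for $(G,\beta)$ in $\X$ is a clean way to formalize what the paper leaves implicit.
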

\begin{proof}
In the previous discussion, we have defined $\Lambda[G,\beta;u]$ as the lax tangent morphism which picks out the vector field $u_A\colon GA\to\T GA$ for each $A\in\X'$ and whose distributive law is induced by $\beta$. In particular, the naturality of $u$ makes the functor $\Lambda[u]$ well-defined, the compatibility of $u$ with $\beta$ makes $\Lambda[\beta]$ into a morphism of vector fields, and the compatibility of $\beta$ with the tangent structures of $(\X',\TT')$ and $(\X,\TT)$, makes $\Lambda[G,\beta]$ into a lax tangent morphism.
\end{proof}

We can now prove the universal property of vector fields, which is the main result of this section.

\begin{theorem}
\label{theorem:universality-vector-fields}
The vector field $\Univ v\colon\U\to\bar\T\U$ of Proposition~\ref{proposition:universal-vector-field} is universal. Concretely, the induced strict tangent natural transformation
\begin{align*}
&\Gamma_{\Univ v}\colon[\X',\TT'\|\VF(\X,\TT)]\to\VF[\X',\TT'\|\X,\TT]
\end{align*}
makes the functor
\begin{align*}
&\TngCat^\op\xrightarrow{[-\|\X,\TT]}\TngCat^\op\xrightarrow{\VF^\op}\TngCat^\op
\end{align*}
which sends a tangent category $(\X',\TT')$ to the tangent category $\VF[\X',\TT'\|\X,\TT]$, into a corepresentable functor. In particular, $\Gamma_{\Univ v}$ is invertible.
\end{theorem}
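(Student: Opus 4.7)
The plan is to exhibit an explicit two-sided inverse $\Lambda$ to $\Gamma_{\Univ v}$, constructed on objects by the lax tangent morphism $\Lambda[G,\beta;u]\colon(\X',\TT')\to\VF(\X,\TT)$ produced by Lemma~\ref{lemma:universality-vector-fields}. To extend $\Lambda$ to morphisms, for a morphism $\varphi\colon(G,\beta;u)\to(G',\beta';u')$ in $\VF[\X',\TT'\|\X,\TT]$ I would set $\Lambda(\varphi)_A\=\varphi_A\colon(GA,u_A)\to(G'A,u'_A)$. The compatibility of $\varphi$ with $u,u'$ is exactly the condition for each $\varphi_A$ to be a morphism in $\VF(\X,\TT)$, while the compatibility of $\varphi$ with $\beta,\beta'$ is exactly the condition that $\Lambda(\varphi)$ forms a tangent natural transformation from $\Lambda[G,\beta;u]$ to $\Lambda[G',\beta';u']$. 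Functoriality is immediate componentwise.

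Next I would verify that $\Gamma_{\Univ v}$ and $\Lambda$ are mutually inverse. For $\Gamma_{\Univ v}\o\Lambda=\id$, unfolding definitions shows that $\Gamma_{\Univ v}(\Lambda[G,\beta;u])$ is the vector field obtained by whiskering $(\U,\Univ v)$ with $\Lambda[G,\beta;u]$; on an object $A$ this returns $\U(u_A)=GA$ with vector-field component $\Univ v_{u_A}=u_A$, recovering $(G,\beta;u)$ on the nose. For $\Lambda\o\Gamma_{\Univ v}=\id$, a lax tangent morphism $(H,\delta)\colon(\X',\TT')\to\VF(\X,\TT)$ assigns to each $A$ a pair $H(A)=(M_A,v_A)$; the composite $(\U,\id)\o(H,\delta)$ has underlying $1$-morphism $A\mapsto M_A$, and the whiskered vector field $\Univ v\ast\id_H$ picks out precisely $v_A$, so $\Lambda$ reassembles the original $H$.

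Naturality of $\Lambda$ in $(\X',\TT')$ is immediate from the componentwise definition and matches the naturality of $\Gamma_{\Univ v}$ already recorded in Lemma~\ref{lemma:induced-lax-tangent-morphism-from-vector-fields}. Since the forgetful $\U\colon\VF(\X,\TT)\to(\X,\TT)$ is strict, the same lemma gives that $\Gamma_{\Univ v}$ is a strict tangent natural transformation, and hence so is its inverse. Together with naturality in $(\X',\TT')$, this exhibits $\VF(\X,\TT)$ as a representing object for the $2$-functor $(\X',\TT')\mapsto\VF[\X',\TT'\|\X,\TT]$.

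The main obstacle will be bookkeeping the distributive laws: one must confirm that the distributive law of $\Lambda\Gamma_{\Univ v}(H,\delta)$, built from the whiskered data via Lemma~\ref{lemma:universality-vector-fields}, genuinely recovers the original $\delta$. This reduces to the explicit description of the tangent bundle functor of $\VF(\X,\TT)$ as $(M,v)\mapsto(\T M,c\o\T v)$ recalled in Section~\ref{subsection:tangent-category-vector-fields}, together with the fact that morphisms in $\VF(\X,\TT)$ are determined by their underlying morphism in $\X$. Once these match up, the remaining tangent-structure coherences (units, sums, lifts, and flips in the sense of Leung's functor) transfer directly from those already encoded in $(H,\delta)$ as a lax tangent morphism.
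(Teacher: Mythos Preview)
Your proposal is correct and follows essentially the same approach as the paper: construct the explicit inverse $\Lambda$ on objects via Lemma~\ref{lemma:universality-vector-fields}, extend it to morphisms componentwise, and verify by direct unfolding that $\Gamma_{\Univ v}\o\Lambda$ and $\Lambda\o\Gamma_{\Univ v}$ are identities. Your explicit attention to the distributive-law bookkeeping and to $\Lambda$ being a strict tangent morphism is, if anything, slightly more careful than the paper's own exposition.
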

\begin{proof}
Consider a tangent category $(\X',\TT')$ and a lax tangent morphism $(F,\alpha)\colon(\X',\TT')\to\VF(\X,\TT)$, which sends each $A\in\X'$ to the vector field $u_A\colon FA\to\T FA$. We want to compare $(F,\alpha)$ with $\Lambda[\Gamma_{\Univ v}(F,\alpha)]$. Concretely, $\Lambda[\Gamma_{\Univ v}(F,\alpha)]$ sends an object $A$ of $\X'$ to the vector fields $\Gamma_{\Univ v}(F,\alpha)_A$. However, this corresponds to the vector field $u_A\colon FA\to\T FA$. Given a morphism $\varphi\colon(F,\alpha)\to(F',\alpha')$ of $[\X',\TT'\|\VF(\X,\TT)]$
\begin{align*}
&\varphi_A\colon(FA,u_A)\to(F'A,u'_A)
\end{align*}
the morphism $\Lambda[\Gamma_{\Univ v}(\varphi)]$ coincides with $\varphi$, since:
\begin{align*}
&\Gamma_{\Univ v}(\varphi)_A=FA\xrightarrow{\varphi}F'A
\end{align*}
Conversely, consider a vector field $(G,\beta;u)$. Thus, $\Lambda[G,\beta;u]$ sends each $A$ to the vector field $u_A\colon GA\to\T GA$. Therefore, $\Gamma_{\Univ v}(\Lambda[G,\beta;u])$ coincides with $(G,\beta;u)$. Similarly, for a given morphism $\varphi\colon(G,\beta;u)\to(G',\beta';u')$ of vector fields, $\Gamma_{\Univ v}(\Lambda[\varphi])$ coincides with $\varphi$.
\par Finally, it is not hard to see that the functor
\begin{align*}
&\Lambda\colon\VF[\X',\TT'\|\X,\TT]\to[\X',\TT'\|\VF(\X,\TT)]
\end{align*}
extends to a strict tangent morphism, since $\Lambda[\T^\VF(G,\beta;u)]$ sends $A\in\X'$ to the vector field
\begin{align*}
&\T GA\xrightarrow{\T u_A}\T^2GA\xrightarrow{c_{GA}}\T^2GA
\end{align*}
while $\bar\T(\Lambda[G,\beta;u])$ sends $A\in\X'$ to the vector field
\begin{align*}
&\T^\VF(u_A)\colon\T GA\xrightarrow{\T u_A}\T^2GA\xrightarrow{c_{GA}}\T^2GA
\end{align*}
Therefore, $\Lambda$ inverts $\Gamma_{\Univ v}$.
\end{proof}

Theorem~\ref{theorem:universality-vector-fields} establishes the correct universal property of the vector fields construction. Thanks to this result, we can now introduce the notion of vector fields in the formal context of tangentads.

\begin{definition}
\label{definition:construction-vector-fields}
A tangentad $(\X,\TT)$ in a $2$-category $\CC$ \textbf{admits the construction of vector fields} if there exists a tangentad $\VF(\X,\TT)$ of $\CC$ together with a strict tangent morphism $\U\colon\VF(\X,\TT)\to(\X,\TT)$ and a vector field $\Univ v\colon\U\to\bar\T\U$ in the Hom-tangent category $[\VF(\X,\TT)\|\X,\TT]$ such that the induced tangent natural transformation $\Gamma_{\Univ v}$ of Lemma~\ref{lemma:induced-lax-tangent-morphism-from-vector-fields} is invertible. The vector field $\Univ v$ is called the \textbf{universal vector field} of $(\X,\TT)$ and $\VF(\X,\TT)$ is called the \textbf{tangentad of vector fields} of $(\X,\TT)$.
\end{definition}

\begin{definition}
\label{definition:construction-vector-fields-2-category}
A $2$-category $\CC$ \textbf{admits the construction of vector fields} provided that every tangentad of $\CC$ admits the construction of vector fields.
\end{definition}

We can now rephrase Theorem~\ref{theorem:universality-vector-fields} as follows.

\begin{corollary}
\label{corollary:universality-vector-fields}
The $2$-category $\Cat$ of categories admits the construction of vector fields and the tangentad of vector fields of a tangentad $(\X,\TT)$ of $\Cat$ is the tangent category $\VF(\X,\TT)$ of vector fields of $(\X,\TT)$.
\end{corollary}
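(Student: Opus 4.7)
The corollary is essentially a repackaging of the preceding results into the new language of Definitions~\ref{definition:construction-vector-fields} and~\ref{definition:construction-vector-fields-2-category}, so my plan is simply to verify that each clause of those definitions is witnessed by data we have already constructed. By Example~\ref{example:tangent-categories}, a tangentad in $\Cat$ is exactly a tangent category $(\X,\TT)$, so I need to exhibit, for every such $(\X,\TT)$, a tangentad $\VF(\X,\TT)$ in $\Cat$, a strict tangent morphism $\U\colon\VF(\X,\TT)\to(\X,\TT)$, and a vector field $\Univ v\colon\U\to\bar\T\U$ in $[\VF(\X,\TT)\|\X,\TT]$ making the induced $\Gamma_{\Univ v}$ invertible.

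The plan is then: take $\VF(\X,\TT)$ to be the tangent category of vector fields recalled in Section~\ref{subsection:tangent-category-vector-fields} (which is itself a tangent category, hence a tangentad in $\Cat$); take $\U$ to be the forgetful functor $(M,v)\mapsto M$; and take $\Univ v$ to be the natural transformation $(M,v)\mapsto v$. Proposition~\ref{proposition:universal-vector-field} already confirms that $\U$ is strict tangent and that $\Univ v$ is a vector field on $\U$ in $[\VF(\X,\TT)\|\X,\TT]$, so these data are legitimate. Theorem~\ref{theorem:universality-vector-fields} then supplies precisely the invertibility of $\Gamma_{\Univ v}$, via the explicit inverse $\Lambda$, for every choice of test tangent category $(\X',\TT')$. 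This verifies Definition~\ref{definition:construction-vector-fields} for an arbitrary tangentad of $\Cat$, and hence, by Definition~\ref{definition:construction-vector-fields-2-category}, that $\Cat$ admits the construction of vector fields, with the constructed tangentad of vector fields being $\VF(\X,\TT)$ as claimed.

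There is no real obstacle here: the only thing to say is that Theorem~\ref{theorem:universality-vector-fields} has been stated using the $2$-functor $[-\|\X,\TT]$ into $\TngCat$ rather than in the more abstract formulation of Definition~\ref{definition:construction-vector-fields}, but these two formulations are literally the same statement once one recalls that a tangentad in $\Cat$ is a tangent category and that $\Tng(\Cat)=\TngCat$. I would therefore present the proof as a single short paragraph pointing out these identifications and citing Proposition~\ref{proposition:universal-vector-field} and Theorem~\ref{theorem:universality-vector-fields}.
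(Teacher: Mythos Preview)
Your proposal is correct and follows essentially the same approach as the paper: the corollary is presented there as a direct rephrasing of Theorem~\ref{theorem:universality-vector-fields} in the language of Definitions~\ref{definition:construction-vector-fields} and~\ref{definition:construction-vector-fields-2-category}, using the data supplied by Proposition~\ref{proposition:universal-vector-field}. The paper gives no separate proof beyond the sentence ``We can now rephrase Theorem~\ref{theorem:universality-vector-fields} as follows,'' so your single-paragraph argument citing those two results is exactly what is expected.
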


To ensure that Definitions~\ref{definition:construction-vector-fields} and~\ref{definition:construction-vector-fields-2-category} are well-posed, one requires the tangentad of vector fields of a given tangentad to be unique. The next proposition establishes that such a construction is defined uniquely up to a unique isomorphism.

\begin{proposition}
\label{proposition:uniqueness-vector-fields}
If a tangentad $(\X,\TT)$ admits the construction of vector fields, the tangentad of vector fields $\VF(\X,\TT)$ of $(\X,\TT)$ is unique up to a unique isomorphism which extends to an isomorphism of the corresponding universal vector fields of $(\X,\TT)$.
\end{proposition}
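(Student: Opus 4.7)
The plan is to adapt the standard Yoneda-style uniqueness argument for corepresenting objects, using the invertibility of the comparison $\Gamma_{\Univ v}$ in Definition~\ref{definition:construction-vector-fields} to construct mutually inverse comparison morphisms between any two candidates.

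Suppose $(\VF(\X,\TT), \U, \Univ v)$ and $(\VF'(\X,\TT), \U', \Univ v')$ both satisfy the universal property. Evaluating the invertible tangent natural transformation $\Gamma_{\Univ v'}$ at $(\X',\TT') = \VF(\X,\TT)$ gives an equivalence of tangent categories, and I apply its inverse to the object $(\U,\Univ v)$ of $\VF[\VF(\X,\TT) \| \X,\TT]$. This produces a unique lax tangent morphism $\Phi \colon \VF(\X,\TT) \to \VF'(\X,\TT)$ characterized by $\U' \o \Phi = \U$ and $\Univ v'_\Phi = \Univ v$. Interchanging the two tangentads yields a unique $\Phi' \colon \VF'(\X,\TT) \to \VF(\X,\TT)$ satisfying $\U \o \Phi' = \U'$ and $\Univ v_{\Phi'} = \Univ v'$.

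To see that $\Phi' \o \Phi = \id$, I compare the images of both sides under the invertible $\Gamma_{\Univ v}$. The identity is sent to $(\U, \Univ v)$, while the composite, by the whiskering description of $\Gamma$ given in Lemma~\ref{lemma:induced-lax-tangent-morphism-from-vector-fields}, is sent to a morphism with underlying $1$-morphism $\U \o \Phi' \o \Phi = \U' \o \Phi = \U$ and vector field $\Univ v_{\Phi' \o \Phi} = (\Univ v_{\Phi'})_\Phi = \Univ v'_\Phi = \Univ v$, using the defining properties of $\Phi'$ and $\Phi$, respectively. Invertibility of $\Gamma_{\Univ v}$ therefore forces $\Phi' \o \Phi = \id$, and a symmetric argument gives $\Phi \o \Phi' = \id$. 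Hence $\Phi$ is an isomorphism in $\Tng(\CC)$ with inverse $\Phi'$, and its distributive law is automatically invertible, so $\Phi$ is in fact a strong tangent morphism.

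Uniqueness of $\Phi$ is already built into its construction as the unique preimage of $(\U,\Univ v)$ under the bijection induced by $\Gamma_{\Univ v'}$, and the condition $\Univ v'_\Phi = \Univ v$ is precisely the statement that $\Phi$ extends to an isomorphism of the corresponding universal vector fields. I expect the main obstacle to be purely organizational: carefully unpacking the whiskerings and the strict/lax distinctions in the several Hom-tangent categories involved. Both of these reduce directly to the explicit formula for $\Gamma$ from Lemma~\ref{lemma:induced-lax-tangent-morphism-from-vector-fields}, so no additional constructions are needed.
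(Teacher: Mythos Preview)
Your proof is correct and takes essentially the same Yoneda-style approach as the paper: both transport the identity (equivalently, the universal vector field $(\U,\Univ v)$) through the chain of isomorphisms $[\VF(\X,\TT)\|\VF(\X,\TT)]\cong\VF[\VF(\X,\TT)\|\X,\TT]\cong[\VF(\X,\TT)\|\VF'(\X,\TT)]$ to produce the comparison map, and then argue it is invertible. You are simply more explicit than the paper in verifying that $\Phi'\o\Phi$ and $\Phi\o\Phi'$ are identities by computing their images under $\Gamma_{\Univ v}$ and $\Gamma_{\Univ v'}$, which the paper leaves implicit.
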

\begin{proof}
Let $\U'\colon\VF'(\X,\TT)\to(\X,\TT)$ together with a vector field $\Univ{v'}\colon\U'\to\bar\T\U'$ be a second universal vector field of $(\X,\TT)$. Via the isomorphism
\begin{align*}
&[\VF(\X,\TT)\|\VF(\X,\TT)]\cong\VF[\VF(\X,\TT)\|\X,\TT]\cong[\VF(\X,\TT)\|\VF'(\X,\TT)]
\end{align*}
the identity on $\VF(\X,\TT)$, induces a tangent morphism $\VF(\X,\TT)\to\VF'(\X,\TT)$ which inverts the tangent morphism $\VF'(\X,\TT)\to\VF(\X,\TT)$ induced by the identity on $\VF'(\X,\TT)$ via the isomorphism:
\begin{align*}
&[\VF'(\X,\TT)\|\VF'(\X,\TT)]\cong\VF[\VF'(\X,\TT)\|\X,\TT]\cong[\VF'(\X,\TT)\|\VF(\X,\TT)]\qedhere
\end{align*}
\end{proof}

\subsection{The formal structures of vector fields}
\label{subsection:structures-vector-fields}
In Section~\ref{subsection:tangent-category-vector-fields}, we recalled that (a) vector fields of a tangent category $(\X,\TT)$ define a new tangent category $\VF(\X,\TT)$; (b) the assigment which sends a tangent category $(\X,\TT)$ to $\VF(\X,\TT)$ is $2$-functorial; (c) vector fields over an object $M$ in a tangent category form a commutative monoid $(\VF(X,\TT;M),z_M,+)$; (d) vector fields over an object $M$ in a tangent category with negatives form an abelian group $(\VF(X,\TT;M),z_M,+,-)$; (e) vector fields over an object $M$ in a tangent category with negatives form a Lie algebra $(\VF(X,\TT;M),z_M,+,[,])$.
\par In this section, we recover the same structures for vector fields in the context of tangentads. For starters, notice that, in the context of tangentads, (a) is a built-in property, since $\VF(\X,\TT)$ is, by definition, a tangentad.
\par In this section, $\CC$ denotes a $2$-category which admits the construction of vector fields.

\subsubsection*{The functoriality of the construction of vector fields}
\label{subsubsection:functoriality-vector-fields}
The first step is to show that (b) $\VF$ is $2$-functorial. Consider a lax tangent morphism $(F,\alpha)\colon(\X,\TT)\to(\X',\TT')$, which induces a functor:
\begin{align*}
&[\VF(\X,\TT)\|F,\alpha]\colon[\VF(\X,\TT)\|\X,\TT]\to[\VF(\X,\TT)\|\X',\TT']
\end{align*}
Since, by Lemma~\ref{lemma:VF-functoriality-tangent-cats}, $\VF$ is functorial in the context of tangent categories, this also defines a lax tangent morphism:
\begin{align*}
&\VF[\VF(\X,\TT)\|F,\alpha]\colon\VF[\VF(\X,\TT)\|\X,\TT]\to\VF[\VF(\X,\TT)\|\X',\TT']
\end{align*}
However, by the universal property of vector fields:
\begin{align*}
&\VF[\VF(\X,\TT)\|\X',\TT']\cong[\VF(\X,\TT)\|\VF(\X',\TT')]
\end{align*}
Therefore, we obtain a functor
\begin{align*}
&\VF[\VF(\X,\TT)\|\X,\TT]\to\VF[\VF(\X,\TT)\|\X',\TT']\cong[\VF(\X,\TT)\|\VF(\X',\TT')]
\end{align*}
which sends the universal vector field $\Univ v$ of $(\X,\TT)$ to a lax tangent morphism:
\begin{align*}
&\VF(F,\alpha)\colon\VF(\X,\TT)\to\VF(\X',\TT')
\end{align*}
Consider now a tangent $2$-morphism $\varphi\colon(F,\alpha)\Rightarrow(G,\beta)$, which induces a tangent natural transformation:
\begin{align*}
&[\VF(\X,\TT)\|\varphi]\colon[\VF(\X,\TT)\|F,\alpha]\Rightarrow[\VF(\X,\TT)\|G,\beta]
\end{align*}
Since, by Lemma~\ref{lemma:VF-functoriality-tangent-cats}, $\VF$ is $2$-functorial in the context of tangent categories, this also defines a natural transformation:
\begin{align*}
&\VF[\VF(\X,\TT)\|\varphi]\colon\VF[\VF(\X,\TT)\|F,\alpha]\Rightarrow\VF[\VF(\X,\TT)\|G,\beta]
\end{align*}
Therefore, under the isomorphism
\begin{align*}
&\VF[\VF(\X,\TT)\|\X',\TT']\cong[\VF(\X,\TT)\|\VF(\X',\TT')]
\end{align*}
the morphism $\VF[\VF(\X,\TT)\|\varphi]_{\Univ v}$ associated to the universal vector field $\Univ v$ corresponds to a tangent $2$-morphism:
\begin{align*}
&\VF(\varphi)\colon\VF(F,\alpha)\Rightarrow\VF(G,\beta)
\end{align*}

\begin{proposition}
\label{proposition:VF-functoriality}
The assignment $\VF$ which sends a tangentad $(\X,\TT)$ to the tangentad of vector fields $\VF(\X,\TT)$ of $(\X,\TT)$ extends to a $2$-functor:
\begin{align*}
&\VF\colon\Tng(\CC)\to\Tng(\CC)
\end{align*}
\end{proposition}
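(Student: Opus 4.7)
The plan is to verify the $2$-functor axioms for $\VF$ whose action on $1$- and $2$-cells was constructed in the paragraphs preceding the statement. The engine of the proof is the universal-property isomorphism
\[ \Gamma_{\Univ v} \colon [(\X',\TT')\|\VF(\X,\TT)] \xrightarrow{\cong} \VF[(\X',\TT')\|(\X,\TT)] \]
of Theorem~\ref{theorem:universality-vector-fields}, which is $2$-natural in $(\X',\TT')$ by Lemma~\ref{lemma:induced-lax-tangent-morphism-from-vector-fields}. Through $\Gamma_{\Univ v}$, every $1$- or $2$-cell with codomain $\VF(\X,\TT)$ is identified with a corresponding vector field, or morphism of vector fields, inside a Hom-tangent category, so that each functoriality axiom for $\VF$ can be tested after an application of $\Gamma_{\Univ v}$. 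At that point one may invoke the $2$-functoriality of $\VF$ on $\TngCat$ (Lemma~\ref{lemma:VF-functoriality-tangent-cats}) together with the $2$-functoriality of $[-\|-]$ (Proposition~\ref{proposition:hom-tangent-categories}).

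Identity preservation is immediate: applying $\VF[\VF(\X,\TT)\|\id_{(\X,\TT)}] = \id$ to the universal vector field $\Univ v$ returns $\Univ v$, whose preimage under $\Gamma_{\Univ v}$ is $\id_{\VF(\X,\TT)}$; an identical argument at the level of identity $2$-cells yields $\VF(\id_{(F,\alpha)}) = \id_{\VF(F,\alpha)}$. For a composable pair $(F,\alpha) \colon (\X,\TT) \to (\X',\TT')$ and $(G,\beta) \colon (\X',\TT') \to (\X'',\TT'')$, both $\VF((G,\beta) \circ (F,\alpha))$ and $\VF(G,\beta) \circ \VF(F,\alpha)$ are characterised as the unique tangent morphisms whose image under $\Gamma_{\Univ v_{(\X'',\TT'')}}$ coincides with the image of $\Univ v_{(\X,\TT)}$ along $\VF[\VF(\X,\TT) \| (G,\beta) \circ (F,\alpha)]$. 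Equality then follows from the bifunctoriality of $[-\|-]$, combined with the composition axiom of Lemma~\ref{lemma:VF-functoriality-tangent-cats} and naturality of $\Gamma_{\Univ v_{(\X'',\TT'')}}$ applied to $\VF(F,\alpha)$. The vertical and horizontal composition axioms for tangent $2$-morphisms are verified by the same translation.

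The main obstacle is the bookkeeping in the composition axiom: $\VF((G,\beta) \circ (F,\alpha))$ is produced by a single application of the universal property of $\VF(\X'',\TT'')$, whereas $\VF(G,\beta) \circ \VF(F,\alpha)$ passes through $\VF(\X',\TT')$ as an intermediate step, so the two objects are assembled by different paths through the network of Hom-tangent categories. Particular care is needed to check that the distributive law attached to $\VF(G,\beta) \circ \VF(F,\alpha)$, namely the composite of the distributive laws produced separately by $\VF(F,\alpha)$ and $\VF(G,\beta)$, matches the distributive law produced directly by the universal-property construction applied to $(G,\beta) \circ (F,\alpha)$. Once diagrammed out, this reduces to an instance of the naturality square of Lemma~\ref{lemma:induced-lax-tangent-morphism-from-vector-fields} for $(F,\alpha)$, which is unconditionally available since $\Gamma_{\Univ v}$ is a strict tangent natural isomorphism.
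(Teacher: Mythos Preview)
Your proposal is correct and follows essentially the same approach as the paper's proof: both arguments use the universal isomorphism $\Gamma_{\Univ v}$ to reduce each $2$-functor axiom to the $2$-functoriality of $\VF[\VF(\X,\TT)\|-]$ (equivalently, of $\VF$ on $\TngCat$ together with the Hom-tangent-category $2$-functor). Your final paragraph is a welcome addition: the paper asserts without comment that the split functor ``sends $\Univ v$ to $\VF(G,\beta)\o\VF(F,\alpha)$'', and your identification of this step with the naturality square of Lemma~\ref{lemma:induced-lax-tangent-morphism-from-vector-fields} for $\VF(F,\alpha)$ makes explicit precisely the bookkeeping that the paper leaves implicit.
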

\begin{proof}
In the previous discussion, we defined the application of $\VF$ on a lax tangent morphism\linebreak $(F,\alpha)\colon(\X,\TT)\to(\X',\TT')$ as the lax tangent morphism corresponding to the universal vector field $\Univ v$ of $(\X,\TT)$ along the functor
\begin{align*}
&\VF[\VF(\X,\TT)\|\X,\TT]\xrightarrow{\VF[\VF(\X,\TT)\|F,\alpha]}\VF[\VF(\X,\TT)\|\X',\TT']\cong[\VF(\X,\TT)\|\VF(\X',\TT')]
\end{align*}
and the application of $\VF$ on a tangent $2$-morphism $\varphi\colon(F,\alpha)\Rightarrow(G,\beta)$ as the tangent $2$-morphism corresponding to the universal vector field $\Univ v$ of $(\X,\TT)$ along the natural transformation
\begin{align*}
&\VF[\VF(\X,\TT)\|\varphi]\colon\VF[\VF(\X,\TT)\|F,\alpha]\Rightarrow\VF[\VF(\X,\TT)\|G,\beta]
\end{align*}
under the isomorphism:
\begin{align*}
&\VF[\VF(\X,\TT)\|\X',\TT']\cong[\VF(\X,\TT)\|\VF(\X',\TT')]
\end{align*}
To prove the functoriality of $\VF$, for starters, consider the application of $\VF$ on the identity on $(\X,\TT)\to(\X,\TT)$. The induced functor $\VF[\VF(\X,\TT)\|\id_{(\X,\TT)}]$ coincides with the identity. Consequently, the lax tangent morphism $\VF(\id_{(\X,\TT)})$ corresponding to the universal vector field must be identity on $\VF(\X,\TT)$. Consider now two lax tangent morphisms $(F,\alpha)\colon(\X,\TT)\to(\X',\TT')$ and $(G,\beta)\colon(\X',\TT')\to(\X'',\TT'')$. Thus, $\VF((G,\beta)\o(F,\alpha))$ is the lax tangent morphism corresponding to he universal vector field via the functor:
\begin{align*}
&\VF[\VF(\X,\TT)\|\X,\TT]\xrightarrow{\VF[\VF(\X,\TT)\|(G,\beta)\o(F,\alpha)]}\VF[\VF(\X,\TT)\|\X'',\TT'']\cong[\VF(\X,\TT)\|\VF(\X'',\TT'')]
\end{align*}
However, by the functoriality of $\VF[\VF(\X,\TT)\|-]$:
\begin{align*}
&\VF[\VF(\X,\TT)\|(G,\beta)\o(F,\alpha)]=\VF[\VF(\X,\TT)\|G,\beta]\o\VF[\VF(\X,\TT)\|F,\alpha]
\end{align*}
Thus, we can split the functor as follows:
\begin{align*}
&\VF[\VF(\X,\TT)\|\X,\TT]\xrightarrow{\VF[\VF(\X,\TT)\|G,\beta]}\VF[\VF(\X,\TT)\|\X',\TT']\xrightarrow{\VF[\VF(\X,\TT)\|F,\alpha]}\VF[\VF(\X,\TT)\|\X'',\TT'']\\
&\quad\cong[\VF(\X,\TT)\|\VF(\X'',\TT'')]
\end{align*}
In particular, this functor sends $\Univ v$ to $\VF(G,\beta)\o\VF(F,\alpha)$, thus:
\begin{align*}
&\VF((G,\beta)\o(F,\alpha))=\VF(G,\beta)\o\VF(F,\alpha)
\end{align*}
Using a similar argument, the identity over a lax tangent morphism $(F,\alpha)$ must correspond to the identity over $\VF(F,\alpha)$ and the composition $\psi\o\varphi$ of two tangent $2$-morphisms correspond to $\VF(\psi)\o\VF(\varphi)$.
\end{proof}

\subsubsection*{The commutative monoid of vector fields}
\label{subsubsection:commutative-monoid-vector-fields}
This section is concerned with the problem of reconstructing (c) the commutative monoid structure on the set of vector fields over an object in the formal context of tangentads. For this purpose, we assume the existence of the $n$-fold $2$-pullback
\begin{equation*}
\begin{tikzcd}
{\VF_n(\X,\TT)} & {\VF(\X,\TT)} \\
{\VF(\X,\TT)} & {(\X,\TT)}
\arrow["{\pi_n}", from=1-1, to=1-2]
\arrow["{\pi_1}"', from=1-1, to=2-1]
\arrow["\lrcorner"{anchor=center, pos=0.125}, draw=none, from=1-1, to=2-2]
\arrow["\U", from=1-2, to=2-2]
\arrow["\dots"{marking, allow upside down}, shift left=3, draw=none, from=2-1, to=1-2]
\arrow["\U"', from=2-1, to=2-2]
\end{tikzcd}
\end{equation*}
in $\CC$, of $\U\colon\VF(\X,\TT)\to(\X,\TT)$ along itself for each tangentad $(\X,\TT)$ of $\CC$ and for each positive integer $n$. Notice that, by Proposition~\ref{proposition:pullbacks} $\VF_n(\X,\TT)$ is a $2$-pullback in $\Tng(\CC)$.
\par In the context of tangent category theory, $\VF_n(\X,\TT)$ is the tangent category of tuples $(M;v_1\,v_n)$ formed by an object $M$ of $\X$ together with $n$ vector fields $v_1\,v_n\colon M\to\T M$ of $M$. A morphism $f\colon(M;v_1\,v_n)\to(N;u_1\,u_n)$ in $\VF_n(\X,\TT)$ is a morphism $f\colon M\to N$ which is a morphism of vector fields $f\colon(M,v_k)\to(N,u_k)$ for each $k=1\,n$. The tangent structure of $\VF_n(\X,\TT)$ is defined as in $\VF(\X,\TT)$ with the tangent bundle functor acting on each vector field of the $n$-tuples. The $k$-th projection $\pi_k\colon\VF_n(\X,\TT)\to\VF(\X,\TT)$ is the strict tangent morphism which projects each tuple $(M;v_1\,v_n)$ into the $k$-th vector field $(M,v_k)$. 
\par For starters, let us define the zero element. Notice first, that the zero morphism $z\colon\id_\X\Rightarrow\T$ of a tangentad $(\X,\TT)$ defines a vector field $z\colon\id_{(\X,\TT)}\to(\T,c)\o\id_{(\X,\TT)}$ on the identity of $(\X,\TT)$ in the Hom-tangent category $[\X,\TT\|\X,\TT]$, namely:
\begin{align*}
&z_{(\X,\TT)}\in\VF[\X,\TT\|\X,\TT]
\end{align*}
By the universal property of vector fields, this corresponds to a tangent morphism:
\begin{align*}
&0\colon(\X,\TT)\to\VF(\X,\TT)
\end{align*}
In the context of tangent category theory, $0$ sends an object $M$ of $(\X,\TT)$ to the zero vector field\linebreak$(M,z_M\colon M\to\T M)$ on $M$ and a morphism $f\colon M\to N$ to the morphism $f\colon(M,z_M)\to(N,z_n)$, which is a morphism of vector fields since $z$ is natural.

\begin{lemma}
\label{lemma:0-is-strict}
The tangent morphism $0\colon(\X,\TT)\to\VF(\X,\TT)$ is strict.
\end{lemma}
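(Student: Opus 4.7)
My plan is to observe that $0$ corresponds, under the universal property isomorphism $\Gamma_{\Univ v}$, to the zero vector field $z_{(\X,\TT)}\in\VF[\X,\TT\|\X,\TT]$, and to conclude strictness from the fact that the base of $z_{(\X,\TT)}$ is the identity tangent morphism $\id_{(\X,\TT)}$, which is strict (its distributive law is the identity $2$-cell). In other words, $0$ is defined as the essentially unique lax tangent morphism $(\X,\TT)\to\VF(\X,\TT)$ whose image under $\Gamma_{\Univ v}$ recovers $z_{(\X,\TT)}$, and we need only show that this preimage inherits the strictness of the base $\id_{(\X,\TT)}$.

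The key step is to generalize the explicit construction of the inverse $\Lambda$ of $\Gamma_{\Univ v}$, which was given in the ordinary tangent category case in Lemma~\ref{lemma:universality-vector-fields}, to the formal tangentad setting. In that case, $\Lambda[G,\beta;u]$ was defined with distributive law $\Lambda[\beta]$ equal to $\beta$ itself, up to the identifications coming from the isomorphism $\U\o\Lambda[G,\beta;u]\cong(G,\beta)$ and the strictness of $\U$. This construction carries over to the formal setting by replacing natural transformations with $2$-morphisms in $\CC$ and verifying the analogous coherences using the axioms of a tangentad and the universal property of $\Univ v$. Consequently, if the base $(G,\beta)$ of a vector field is strict, then so is the induced tangent morphism $\Lambda[G,\beta;u]$.

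Applying this principle to the vector field $z_{(\X,\TT)}$, whose base $\id_{(\X,\TT)}$ is strict with identity distributive law, we conclude that $0=\Lambda[\id_{(\X,\TT)};z]$ is strict. The main obstacle, I expect, is spelling out the formal extension of $\Lambda$ and checking that the distributive law of $\Lambda[G,\beta;u]$ is indeed recoverable from $\beta$ alone via the universal property; however, this amounts to routine bookkeeping with $2$-cells in $\CC$, largely parallel to the construction given in the proof of Theorem~\ref{theorem:universality-vector-fields}, and hinges on the strictness of $\U$ together with the compatibility between $\U$ and $\Univ v$ at the level of the ambient $2$-category.
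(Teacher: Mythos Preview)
Your approach is conceptually sound and leads to the correct conclusion, but the key claim---that the explicit construction of $\Lambda$ from the tangent-category case ``carries over to the formal setting by replacing natural transformations with $2$-morphisms''---is not literally true and hides the real work. In the tangent-category case, $\Lambda[u]$ is built by sending each object $A$ to the pair $(GA,u_A)$; in the formal setting there are no objects to which one can apply $u$, and $\Lambda$ exists only as the abstract inverse of $\Gamma_{\Univ v}$. What you can say directly is that $\U\o\Lambda[G,\beta;u]=(G,\beta)$, hence (using strictness of $\U$) that $\U\gamma=\beta$ where $\gamma$ is the distributive law of $\Lambda[G,\beta;u]$. But $\U\gamma=\id$ does not by itself force $\gamma=\id$; one needs the injectivity of $\Gamma_{\Univ v}$ on \emph{morphisms}. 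The missing observation is that $\gamma$ is itself a tangent $2$-morphism (by the compatibility of the distributive law with the canonical flip, Definition~\ref{definition:tangent-morphisms}(c)), so it lives in the Hom-category on which $\Gamma_{\Univ v}$ is an isomorphism; then $\Gamma_{\Univ v}(\gamma)=\U\gamma=\id$ forces $\gamma=\id$.

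Once that gap is filled, your argument and the paper's are essentially the same computation viewed from opposite ends: the paper applies $\Gamma_{\Univ v}$ to $0\o(\T,c)$ and $(\T^\VF,c^\VF)\o0$ and shows both land on $z_{(\T,c)}$, concluding equality by injectivity; you instead phrase this as a property of the inverse $\Lambda$. Your framing has the advantage of yielding a reusable principle (strict base $\Rightarrow$ strict $\Lambda[G,\beta;u]$) that would uniformly handle Lemmas~\ref{lemma:+-is-strict}, \ref{lemma:--is-strict}, and~\ref{lemma:lie-bracket-is-strict} as well, whereas the paper repeats the direct computation each time. But you should state and prove that principle cleanly rather than asserting that the object-level construction of $\Lambda$ ``carries over''.
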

\begin{proof}
The strict tangent morphism $\Gamma_{\Univ v}\colon[\X,\TT\|\VF(\X,\TT)]\to\VF[\X,\TT\|\X,\TT]$ induced by the universal vector field $\Univ v$ sends $0\o(\T,c)$ to $z_{(\T,c)}$ which, by the compatibility between the zero morphism and the canonical flip is equal to the vector field $c\o\bar\T z$, which is also equal to the vector field $c\o\bar\T(\Gamma_{\Univ v}(0))$. Since $\U$ is a strict tangent morphism and that $\Univ v$, regarded a $2$-morphism, is a tangent morphism:
\begin{align*}
&c\o\bar\T(\Gamma_{\Univ v}(0))=\Univ v_{(\T,c)}(0)=\Gamma_{\Univ v}((\T^\VF,c^\VF)\o0)
\end{align*}
By the universal property of $\Univ v$, we conclude that $0\o(\T,c)=(\T^\VF,c^\VF)\o0$. One can apply a similar argument to the structural natural transformations to prove that $0$ is a strict tangent morphism.
\end{proof}

The next step is to define the sum of two vector fields on the same base. For starters, notice that the tangentad $\VF_2(\X,\TT)$ captures precisely the abstract version of the tangent category of pairs of vector fields on the same base object, therefore, we expect the sum to be a tangent morphism of type $+\colon\VF_2(\X,\TT)\to\VF(\X,\TT)$.
\par Consider first the general case where $n$ is any positive integer and let $\pi_k\colon\VF_n(\X,\TT)\to\VF(\X,\TT)$ be the $k$-th projection. However, $\pi_k$ is a (strict) tangent morphism, so in particular, an object of the Hom-tangent category $[\VF_n(\X,\TT)\|\VF(\X,\TT)]$. By the universal property of vector fields, this corresponds to a vector field 
\begin{align*}
\Univ v_k&\colon\pi_k\o\U\to\bar\T(\pi_k\o\U)
\end{align*}
of $[\VF_n(\X,\TT)\|\X,\TT]$.
\par In the context of tangent category theory, $\Univ v_k$ is the natural transformation
\begin{align*}
&\Univ v_k(M;v_1\,v_n)\colon\U\pi_k(M;v_1\,v_n)=M\xrightarrow{v_k}\T M=\bar\T\U\pi_k(M;v_1\,v_n)
\end{align*}
which picks out the $k$-th vector field in each tuple $(M;v_1\,v_n)\in\VF_n(\X,\TT)$.
\par Consider now the case $n=2$. The projections $\pi_1,\pi_2\colon\VF_2(\X,\TT)\to\VF(\X,\TT)$ induce two vector fields $\Univ v_1$ and $\Univ v_2$ over the same base object $\U_2\=\pi_1\o\U=\pi_2\o\U$. Thus, since these are vector fields in a tangent category, we can define a new vector field $\Univ v_1+\Univ v_2$, using that vector fields on the same base object of a tangent category can be summed together. By the universal property of vector fields, $\Univ v_1+\Univ v_2$ must correspond to a tangent morphism:
\begin{align*}
+\colon\VF_2(\X,\TT)\to\VF(\X,\TT)
\end{align*}
In the context of tangent category theory, $+$ sends a tuple $(M;u,v)$ formed by an object $M$ of $(\X,\TT)$ and by two vector fields $u,v\colon M\to\T M$ of $M$ to the vector field $(M,u+v)$.

\begin{lemma}
\label{lemma:+-is-strict}
The tangent morphism $+\colon\VF_2(\X,\TT)\to\VF(\X,\TT)$ is strict.
\end{lemma}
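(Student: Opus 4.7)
The plan is to mirror the argument of Lemma~\ref{lemma:0-is-strict}: I apply the strict tangent isomorphism
$\Gamma_{\Univ v} \colon [\VF_2(\X, \TT) \| \VF(\X, \TT)] \xrightarrow{\cong} \VF[\VF_2(\X, \TT) \| \X, \TT]$
to both lax tangent morphisms $+ \circ (\T^{\VF_2}, c^{\VF_2})$ and $(\T^\VF, c^\VF) \circ +$, show that the resulting vector fields coincide, and then invoke invertibility of $\Gamma_{\Univ v}$ to conclude strictness.

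First, I would unfold $\Gamma_{\Univ v}(+ \circ (\T^{\VF_2}, c^{\VF_2}))$ via the naturality statement of Lemma~\ref{lemma:induced-lax-tangent-morphism-from-vector-fields} as the right-whiskering $(\Univ v_1 + \Univ v_2) \cdot (\T^{\VF_2}, c^{\VF_2})$, using the defining identity $\Gamma_{\Univ v}(+) = \Univ v_1 + \Univ v_2$. Next, I would unfold $\Gamma_{\Univ v}((\T^\VF, c^\VF) \circ +)$ using strictness of $\Gamma_{\Univ v}$, which gives $c \circ \bar\T(\Univ v_1 + \Univ v_2)$ once the tangent bundle on $\VF[\VF_2(\X, \TT) \| \X, \TT]$ is expressed via the pointwise formula $u \mapsto c \circ \bar\T u$ recalled in Section~\ref{subsection:universal-property-vector-fields}.

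It then remains to verify that these two vector fields agree. Both operations $(-) \cdot (\T^{\VF_2}, c^{\VF_2})$ and $c \circ \bar\T(-)$ distribute over the pointwise commutative monoid structure on vector fields---the former because right-whiskering is a strict tangent operation (Proposition~\ref{proposition:hom-tangent-categories}), the latter because $c$ is an additive bundle morphism and $\bar\T$ preserves the additive bundle $\p$---so the comparison reduces to the identity $\Univ v_i \cdot (\T^{\VF_2}, c^{\VF_2}) = c \circ \bar\T \Univ v_i$ for each projection $\pi_i$. Expanding $\Univ v_i = \Univ v \cdot \pi_i$ and using the strictness of $\pi_i$ to commute the two whiskerings, this reduces in turn to the single compatibility $\Univ v \cdot \T^\VF = c \circ \bar\T \Univ v$, which is precisely Diagram~\eqref{diagram:beta-compatibility-vector-field-u} specialised to $G = \U$ and $\beta = \id$, and so is automatic from $\Univ v$ being a vector field on $\U$. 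I expect the only real obstacle to be the purely bookkeeping task of carefully tracking the whiskerings and the identifications $\U_2 \circ (\T^{\VF_2}, c^{\VF_2}) = (\T, c) \circ \U_2 = \U \circ (\T^\VF, c^\VF) \circ +$ so that the two applications of $\Gamma_{\Univ v}$ land on the same base lax tangent morphism; once that is in place, invertibility of $\Gamma_{\Univ v}$ delivers the desired strict equality.
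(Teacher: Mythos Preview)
Your proposal is correct and follows essentially the same approach as the paper: apply the strict isomorphism $\Gamma_{\Univ v}$ to both $+\o(\T^{\VF_2},c^{\VF_2})$ and $(\T^\VF,c^\VF)\o+$, distribute over the sum to reduce to a statement about each $\Univ v_i$, and then invoke the compatibility of the universal vector field with the tangent structure. Your write-up is in fact more explicit than the paper's---you spell out why the two operations distribute over $+$ and identify the final step as an instance of Diagram~\eqref{diagram:beta-compatibility-vector-field-u}---whereas the paper compresses this into the phrase ``using that $\U$ is a strict tangent morphism and that $\Univ v$ is a tangent $2$-morphism''.
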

\begin{proof}
The strict tangent morphism $\Gamma_{\Univ v}\colon[\VF_2(\X,\TT)\|\VF(\X,\TT)]\to\VF[\VF_2(\X,\TT)\|\X,\TT]$ induced by the universal vector field $\Univ v$ sends $+\o(\T^{\VF_2},c^{\VF_2})$ to:
\begin{align*}
&(\Univ v_1+\Univ v_2)_{(\T,c)}=(\Univ v_1)_{(\T,c)}+(\Univ v_2)_{(\T,c)}
\end{align*}
However, $\Univ v_k=\Gamma_{\Univ v}(\pi_k)$. Using that $\U$ is a strict tangent morphism and that $\Univ v$ is a tangent $2$-morphism:
\begin{align*}
&\quad(\Univ v_1)_{(\T,c)}+(\Univ v_2)_{(\T,c)}\\
=&\quad\Gamma_{\Univ v}(\pi_1)_{(\T,c)}+\Gamma_{\Univ v}(\pi_2)_{(\T,c)}\\
=&\quad\bar\T(\Gamma_{\Univ v}(\pi_1))+\bar\T(\Gamma_{\Univ v}(\pi_2))\\
=&\quad\bar\T(\Univ v_1+\Univ v_2)\\
=&\quad(\T^\VF,c^\VF)\o+
\end{align*}
Similarly, one can prove the compatibility with the structural $2$-morphisms.
\end{proof}

\begin{theorem}
\label{theorem:commutative-monoid-vector-fields}
Consider a $2$-category $\CC$ which admits the construction of vector fields. Let us also assume the existence of $n$-fold $2$-pullbacks of $\U\colon\VF(\X,\TT)\to(\X,\TT)$ along itself in $\CC$. Therefore, $\U$ equipped with the strict tangent morphisms $0\colon(\X,\TT)\to\VF(\X,\TT)$ and $+\colon\VF_2(\X,\TT)\to\VF(\X,\TT)$ becomes an additive bundle in $\Tng_=(\CC)$.
\end{theorem}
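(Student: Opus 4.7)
The plan is to exploit the universal property of vector fields (Definition~\ref{definition:construction-vector-fields}): for every tangentad $(\X',\TT')$ the isomorphism of Hom-tangent categories
$$\Gamma_{\Univ v}\colon[\X',\TT'\|\VF(\X,\TT)]\xrightarrow{\cong}\VF[\X',\TT'\|\X,\TT]$$
identifies strict tangent morphisms $(\X',\TT')\to\VF(\X,\TT)$ with vector fields on their composite with $\U$ in $[\X',\TT'\|\X,\TT]$. Since vector fields in any tangent category assemble into a commutative monoid (Section~\ref{subsection:tangent-category-vector-fields}), each additive-bundle axiom for the triple $(\U,0,+)$ translates under $\Gamma_{\Univ v}$ into a monoid identity for vector fields, which can then be verified directly.

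First I would dispatch the two compatibility conditions. The section axiom $\U\o 0=\id_{(\X,\TT)}$ is built into the definition of $0$: under $\Gamma_{\Univ v}$, $0$ classifies the vector field $z_{(\X,\TT)}$ on $\id_{(\X,\TT)}$, forcing the underlying base composite with $\U$ to be $\id_{(\X,\TT)}$. The compatibility $\U\o +=\U\o\pi_1=\U\o\pi_2$ is handled identically, since $+$ classifies $\Univ v_1+\Univ v_2$, whose base is the common composite $\U\o\pi_k$. For associativity, I would test the two parallel composites $+\o(+\times_{\U}\id)$ and $+\o(\id\times_{\U}+)\colon\VF_3(\X,\TT)\to\VF(\X,\TT)$ by evaluating $\Gamma_{\Univ v}$ at $(\X',\TT')=\VF_3(\X,\TT)$; by naturality of $\Gamma_{\Univ v}$ in its first argument (Lemma~\ref{lemma:induced-lax-tangent-morphism-from-vector-fields}), these correspond to $(\Univ v_1+\Univ v_2)+\Univ v_3$ and $\Univ v_1+(\Univ v_2+\Univ v_3)$ respectively, which coincide because vector-field addition is associative. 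Commutativity follows analogously for $+\o\tau\colon\VF_2(\X,\TT)\to\VF(\X,\TT)$, translating into $\Univ v_2+\Univ v_1=\Univ v_1+\Univ v_2$, and unitality from $z+\Univ v=\Univ v$ applied along the pairing $\<0\o\U,\id\>\colon\VF(\X,\TT)\to\VF_2(\X,\TT)$.

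The main obstacle is the bookkeeping that makes this translation rigorous: one must verify that $\Gamma_{\Univ v}$ is compatible with whiskering by the canonical projections out of the $n$-fold $2$-pullbacks $\VF_n(\X,\TT)$, so that, for instance, the vector field classifying $+\o\pi_k$ along a factorization into $\VF_3(\X,\TT)$ is indeed the expected binary sum of two of the $\Univ v_j$. This uses the naturality of $\Gamma_{\Univ v}$ from Lemma~\ref{lemma:induced-lax-tangent-morphism-from-vector-fields} together with the fact, noted immediately before the statement and provided by Proposition~\ref{proposition:pullbacks}, that $\VF_n(\X,\TT)$ remains a $2$-pullback in $\Tng(\CC)$. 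Once these identifications are in place, every axiom of Definition~\ref{definition:additive-bundle} reduces to a commutative-monoid identity for vector fields in a tangent category; combined with the strictness of $\U$, $0$, and $+$ already established in Lemmas~\ref{lemma:0-is-strict} and~\ref{lemma:+-is-strict}, this yields the desired additive bundle structure in $\Tng_=(\CC)$.
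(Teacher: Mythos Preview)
Your proposal is correct and follows essentially the same approach as the paper: reduce each additive-bundle axiom, via the universal property of $\VF(\X,\TT)$, to the corresponding commutative-monoid identity $z+\Univ v=\Univ v$, $\Univ v_1+\Univ v_2=\Univ v_2+\Univ v_1$, and $(\Univ v_1+\Univ v_2)+\Univ v_3=\Univ v_1+(\Univ v_2+\Univ v_3)$ for vector fields in the appropriate Hom-tangent category. Your write-up is in fact more explicit than the paper's---you spell out the section and projection-compatibility axioms and flag the naturality bookkeeping needed to identify $\Gamma_{\Univ v}$ of composites like $+\o(+\times_\U\id)$ with iterated sums of the $\Univ v_k$---whereas the paper records only the three monoid identities and leaves these verifications implicit.
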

\begin{proof}
By the universal property of vector fields, the conditions required for $(\U,0,+)$ to be an additive bundle are equivalent to the following equations of vector fields in the correct Hom-tangent categories:
\begin{align*}
&z+\Univ v=\Univ v\\
&\Univ v_1+\Univ v_2=\Univ v_2+\Univ v_1\\
&(\Univ v_1+\Univ v_2)+\Univ v_3=\Univ v_1+(\Univ v_2+\Univ v_3)
\end{align*}
In particular, the first equation establishes unitality, the second equation ensures commutativity, and the third equation guarantees associativity.
\end{proof}

\subsubsection*{The abelian group of vector fields}
\label{subsubsection:abelian-group-vector-fields}
In this section, we show that (d) when a base tangentad $(\X,\TT)$ admits negatives, also the tangentad $\VF(\X,\TT)$ of vector field admits negatives. Moreover, we use negatives to define a negation morphism $-\colon\VF(\X,\TT)\to\VF(\X,\TT)$ which makes the additive bundle $(\U,0,+)$ of Theorem~\ref{theorem:commutative-monoid-vector-fields} into an abelian group bundle.

\begin{proposition}
\label{proposition:negatives-vector-fields}
If a tangentad $(\X,\TT)$ admits negatives and the construction of vector fields, the tangentad $\VF(\X,\TT)$ of vector fields of $(\X,\TT)$ admits negatives as well.
\end{proposition}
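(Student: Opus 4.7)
The plan is to use the universal property of vector fields to lift the negation $n\colon\T\Rightarrow\T$ of $(\X,\TT)$ to a negation on $\VF(\X,\TT)$. Under the strict tangent isomorphism
\[
\Gamma_{\Univ v}\colon[\VF(\X,\TT)\|\VF(\X,\TT)]\cong\VF[\VF(\X,\TT)\|\X,\TT]
\]
of Theorem~\ref{theorem:universality-vector-fields}, the tangent bundle endomorphism $\T^\VF$ corresponds to a vector field on $\bar\T\U=\T\U$ whose underlying vector field $2$-morphism is $c\U\o\T\Univ v$. I define $n^\VF\colon\T^\VF\Rightarrow\T^\VF$ as the unique tangent $2$-morphism corresponding under $\Gamma_{\Univ v}$ to the endomorphism $n\U$ of this vector field; equivalently, $n^\VF$ is characterised by $\U n^\VF = n\U$.

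First I would verify that $n\U$ genuinely defines a morphism of vector fields from $\Gamma_{\Univ v}(\T^\VF)$ to itself in $\VF[\VF(\X,\TT)\|\X,\TT]$. Its tangent-compatibility against the distributive law $c\U$ of $\bar\T\U$ amounts to the identity $c\o\T n = n_\T\o c$ in $(\X,\TT)$, which follows from $(\id_\X,c)\colon\T\p\to\p_\T$ being an additive bundle morphism together with the general fact that negations are preserved by additive bundle morphisms. Compatibility of $n\U$ with the vector field structure $c\U\o\T\Univ v$ then reduces, using this same identity and the interchange between $n\colon\T\Rightarrow\T$ and $\Univ v\colon\U\Rightarrow\T\U$, to a direct diagram chase. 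Since the tangent-compatibility and vector-field compatibility conditions are the entirety of what it means to be a morphism of vector fields, this produces $n^\VF$.

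Finally, I would verify the negation axiom for $n^\VF$ by whiskering with the strict tangent morphism $\U$, substituting $\U p^\VF = p\U$, $\U z^\VF = z\U$, $\U s^\VF = s\U$, and $\U n^\VF = n\U$. Since $\U$ preserves the $\T$-pullback that defines $\T^\VF_2$, the whiskered equation reduces to the negation axiom for $n$ in $(\X,\TT)$ composed with $\U$, which holds by assumption. The main obstacle is ensuring that this whiskered verification actually suffices at the level of $\Tng(\CC)$: this is handled by observing that $\Gamma_{\Univ v}$ is a strict tangent isomorphism of tangent categories and hence identifies the negation axiom for $n^\VF$ in $\Tng(\CC)$ with the corresponding equation in $\VF[\VF(\X,\TT)\|\X,\TT]$ involving $n\U$, $p\U$, $s\U$, and $z\U$, which in turn unpacks to the negation axiom for $n$.
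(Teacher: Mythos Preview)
Your proposal is correct and follows essentially the same approach as the paper: both use the isomorphism $\Gamma_{\Univ v}\colon[\VF(\X,\TT)\|\VF(\X,\TT)]\cong\VF[\VF(\X,\TT)\|\X,\TT]$ to transport the negation $\bar n$ of the Hom-tangent category (which at $\U$ is precisely your $n\U$) across to a tangent $2$-morphism $n^\VF\colon(\T^\VF,c^\VF)\Rightarrow(\T^\VF,c^\VF)$. Your treatment is more explicit than the paper's in two places: you unpack the vector field $\Gamma_{\Univ v}(\T^\VF,c^\VF)$ as $c\U\o\T\Univ v$ and verify directly that $n\U$ is a morphism of vector fields via $c\o n_\T=\T n\o c$, and you spell out why the negation axiom can be checked after whiskering with $\U$ (namely, because $\Gamma_{\Univ v}$ is a faithful strict tangent morphism whose action on morphisms is $\U\o(-)$); the paper leaves both of these as ``not hard to see''.
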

\begin{proof}
Consider a tangentad $(\X,\TT)$ with negatives. By Proposition~\ref{proposition:hom-tangent-categories}, also each Hom-tangent category $[\X',\TT'\|\X,\TT]$ admits negatives. In particular, the tangent category $[\VF(\X,\TT)\|\X,\TT]$ admits negatives, with negation denoted by $\bar n$. It is not hard to see that this negation defines a morphism of vector fields $\bar n\colon\Gamma_{\Univ v}(\T^\VF,c^\VF)\to\Gamma_{\Univ v}(\T^\VF,c^\VF)$. By the universal property of vector fields, this induces a tangent $2$-morphism $n\colon(\T^\VF,c^\VF)\Rightarrow(\T^\VF,c^\VF)$. The underlying $2$-morphism $n\colon\T^\VF\Rightarrow\T^\VF$ defines a negation for the tangentad $\VF(\X,\TT)$.
\end{proof}

The additive structure on $\U\colon\VF(\X,\TT)\to(\X,\TT)$ of Theorem~\ref{theorem:commutative-monoid-vector-fields} is defined by the commutative monoid structure on vector fields in the Hom-tangent categories and by the universal property of vector fields. When the base tangentad $(\X,\TT)$ admits negatives, by Proposition~\ref{proposition:hom-tangent-categories}, so does $\VF[\X',\TT'\|\X,\TT]$. In particular, $\VF[\VF(\X,\TT)\|\X,\TT]$ admits negatives. Therefore, one can take the negation of the universal vector field and defining a new vector field $-\Univ v$. By the universal property of vector fields, $-\Univ v$ corresponds to a tangent morphism:
\begin{align*}
-\colon\VF(\X,\TT)\to\VF(\X,\TT)
\end{align*}
In the context of tangent category theory, $-$ sends a vector field $(M,v)$ to the vector field $(M,-v)$.

\begin{lemma}
\label{lemma:--is-strict}
The tangent morphism $-\colon\VF(\X,\TT)\to\VF(\X,\TT)$ is strict.
\end{lemma}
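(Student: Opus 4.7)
The plan is to follow the same template used in Lemma~\ref{lemma:0-is-strict} for $0$ and Lemma~\ref{lemma:+-is-strict} for $+$: transport the strictness equations along the isomorphism $\Gamma_{\Univ v}$ of Theorem~\ref{theorem:universality-vector-fields}, and verify them at the level of vector fields in the Hom-tangent category $\VF[\VF(\X,\TT)\|\X,\TT]$, where the tangent structure is defined pointwise from $(\X,\TT)$.

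By construction, the strict tangent equivalence
\[
\Gamma_{\Univ v}\colon[\VF(\X,\TT)\|\VF(\X,\TT)]\to\VF[\VF(\X,\TT)\|\X,\TT]
\]
sends $-\colon\VF(\X,\TT)\to\VF(\X,\TT)$ to the negated universal vector field $-\Univ v=\bar n\o\Univ v$, where $\bar n$ denotes the negation on the tangent category with negatives $[\VF(\X,\TT)\|\X,\TT]$ furnished by Proposition~\ref{proposition:hom-tangent-categories}. To prove $-\o(\T^\VF,c^\VF)=(\T^\VF,c^\VF)\o-$, it suffices, by Theorem~\ref{theorem:universality-vector-fields}, to show that the two sides are equalized by $\Gamma_{\Univ v}$, and similarly for the compatibilities with the projection, zero, sum, vertical lift, and canonical flip of $(\T^\VF,c^\VF)$.

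First I would compute $\Gamma_{\Univ v}(-\o(\T^\VF,c^\VF))$: this is the vector field $(-\Univ v)_{(\T^\VF,c^\VF)}$ on the base $\U\o-\o(\T^\VF,c^\VF)$, which unfolds via the pointwise nature of $\bar n$ (used already in the proof of Proposition~\ref{proposition:negatives-vector-fields}) to $\bar n\o\Univ v_{(\T^\VF,c^\VF)}$. Second, since $\Gamma_{\Univ v}$ is a strict tangent morphism, $\Gamma_{\Univ v}((\T^\VF,c^\VF)\o-)=\bar\T(\Gamma_{\Univ v}(-))=\bar\T(-\Univ v)$, and the tangent bundle functor $\bar\T$ of $\VF[\VF(\X,\TT)\|\X,\TT]$ is just postcomposition by $(\T,c)$. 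Both expressions then reduce to $c\o\T(-\Univ v)$, which by the pointwise compatibility between negation and the canonical flip in the additive bundle $\bar\T\p$ (a built-in consequence of Proposition~\ref{proposition:hom-tangent-categories}, applied to the additive bundle structure of $\p$ in $\End(\X)$) coincides with $\bar n$ applied to the tangent-bundle-of-vector-fields of $\Univ v$. An analogous bookkeeping handles the compatibilities between $-$ and $p$, $z$, $s$, $l$, and $c$: each reduces, after applying $\Gamma_{\Univ v}$, to a pointwise identity expressing the fact that negation is a strict tangent endomorphism on the Hom-tangent category with negatives.

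The only mild obstacle is the bookkeeping of whiskerings in the isomorphism $\Gamma_{\Univ v}$, in particular ensuring that the naturality of the negation $\bar n$ (as a $2$-morphism in the tangent $2$-category $\TngCat$) correctly commutes past the $2$-morphisms defining $(\T^\VF,c^\VF)$; however, this is automatic because the tangent structure on $\VF[\VF(\X,\TT)\|\X,\TT]$ is defined pointwise from that of $(\X,\TT)$, and negation is a tangent natural transformation in that pointwise structure. Once these compatibilities are recorded, strictness follows at once from the universality of $\Univ v$.
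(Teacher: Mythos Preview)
Your proposal is correct and follows essentially the same approach as the paper: both transport the strictness question through the isomorphism $\Gamma_{\Univ v}$ and reduce it to the equality $(-\Univ v)_{(\T^\VF,c^\VF)}=\bar\T(-\Univ v)$ of vector fields in the Hom-tangent category, then appeal to universality. Your write-up is more explicit about the bookkeeping (the pointwise nature of $\bar n$ and its compatibility with $c$), whereas the paper compresses the computation into a single line and leaves the structural $2$-morphisms to the reader.
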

\begin{proof}
The strict tangent morphism $\Gamma_{\Univ v}\colon[\VF(\X,\TT)\|\VF(\X,\TT)]\to\VF[\VF(\X,\TT)\|\X,\TT]$ induced by the universal vector field $\Univ v$ sends $-\o(\T^\VF,c^\VF)$ to:
\begin{align*}
&(-\Univ v)_{(\T,c)}=\T^\VF(-\Univ v)=\Gamma_{\Univ v}((\T^\VF,c^\VF)\o-)
\end{align*}
By the universal property of vector fields, $-\o(\T^\VF,c^\VF)=(\T^\VF,c^\VF)\o-$. Similarly, one can prove the compatibility with the structural $2$-morphisms.
\end{proof}

\begin{theorem}
\label{theorem:abelian-group-vector-fields}
Consider a $2$-category $\CC$ which admits the construction of vector fields. Let us also assume the existence of $n$-fold $2$-pullbacks of $\U\colon\VF(\X,\TT)\to(\X,\TT)$ along itself in $\CC$. For each tangentad $(\X,\TT)$ which admits negatives, the additive bundle $(\U\colon\VF(\X,\TT),0,+)$ of Theorem~\ref{theorem:commutative-monoid-vector-fields} comes with a negation $-\colon\VF(\X,\TT)\to\VF(\X,\TT)$ which makes $\U$ into an abelian group bundle in $\Tng_=\CC$.
\end{theorem}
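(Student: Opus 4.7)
The plan is to imitate the argument used for Theorem~\ref{theorem:commutative-monoid-vector-fields}: rephrase each required axiom as a statement about vector fields in the appropriate Hom-tangent category, and then invoke the universal property of $\Univ v$ to transport the conclusion back to $\Tng_=(\CC)$. The only genuinely new input is Proposition~\ref{proposition:negatives-vector-fields} together with the fact that the Hom-tangent categories inherit negatives from $(\X,\TT)$ (Proposition~\ref{proposition:hom-tangent-categories}), so that the defining identity of a negation already holds pointwise in $\VF[\VF(\X,\TT)\|\X,\TT]$.

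First, I would observe that Lemma~\ref{lemma:--is-strict} provides the strict tangent morphism $-\colon\VF(\X,\TT)\to\VF(\X,\TT)$, so the only thing left to check is the abelian group axiom, namely that the composite
\begin{align*}
\VF(\X,\TT)\xrightarrow{\<-,\id\>}\VF_2(\X,\TT)\xrightarrow{+}\VF(\X,\TT)
\end{align*}
coincides with $0\o\U$. Because the universal property of vector fields gives an isomorphism
\begin{align*}
\Tng(\CC)\bigl[\VF(\X,\TT),\VF(\X,\TT)\bigr]\cong\VF[\VF(\X,\TT)\|\X,\TT],
\end{align*}
this equation of strict tangent morphisms is equivalent to the equality of the associated vector fields in $\VF[\VF(\X,\TT)\|\X,\TT]$. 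Unwinding the definitions of $0$, $+$, and $-$ given before Lemmas~\ref{lemma:0-is-strict}, \ref{lemma:+-is-strict}, \ref{lemma:--is-strict}, the vector field attached to $+\o\<-,\id\>$ is $(-\Univ v)+\Univ v$, while the vector field attached to $0\o\U$ is $z_\U$.

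Next, I would invoke Proposition~\ref{proposition:hom-tangent-categories} to note that, since $(\X,\TT)$ has negatives, the Hom-tangent category $[\VF(\X,\TT)\|\X,\TT]$ has negatives, computed pointwise. The negation $\bar n$ of this Hom-tangent category is exactly the negation used in Proposition~\ref{proposition:negatives-vector-fields} to produce $-\Univ v$, and the defining axiom of negatives in Definition~\ref{definition:tangent-category} applied to $\Univ v$ reads
\begin{align*}
s\o\<\bar n\o\Univ v,\Univ v\>=z\o p\o\Univ v=z_\U,
\end{align*}
where the last equality uses that $\Univ v$ is a section of $\bar p$. Rewriting the left-hand side in the additive notation of the commutative monoid of vector fields gives exactly $(-\Univ v)+\Univ v$, so the required identity of vector fields holds.

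Finally, transporting this identity back through the isomorphism $\Gamma_{\Univ v}$ and combining with Theorem~\ref{theorem:commutative-monoid-vector-fields} yields that $(\U,0,+,-)$ is an abelian group bundle in $\Tng_=(\CC)$. I do not expect a serious obstacle: the whole argument is a mechanical transport along the universal property, and the only subtlety is making sure that the negation of the Hom-tangent category truly agrees with the negation used to define $-$, which is built into the construction in Proposition~\ref{proposition:negatives-vector-fields}.
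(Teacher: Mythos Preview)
Your proposal is correct and takes essentially the same approach as the paper: reduce the abelian group axiom to the single equation $-\Univ v+\Univ v=z$ of vector fields in the Hom-tangent category via the universal property of $\Univ v$, and then observe that this equation is precisely the defining identity of the negation there. The paper's proof is simply a one-line version of your argument, omitting the explicit unwinding you provide.
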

\begin{proof}
By the universal property of vector fields, the condition required for $(\U,0,+,-)$ to be an abelian group bundle is equivalent to the following equation of vector fields:
\begin{align*}
&-\Univ v+\Univ v=z
\end{align*}
In particular, this establishes that the negation defines inverses for the sum.
\end{proof}

\subsubsection*{The Lie algebra of vector fields}
\label{subsubsection:lie-algebra-vector-fields}
In this section, we show (e) how to equip the abelian group bundle of Theorem~\ref{theorem:abelian-group-vector-fields} with a suitable Lie bracket, provided the existence of negatives. In Section~\ref{subsubsection:commutative-monoid-vector-fields}, we introduced the vector fields $\Univ v_k$ induced by the projections $\pi_k\colon\VF_n(\X,\TT)\to(\X,\TT)$. When $(\X,\TT)$ admits negatives, so does the Hom-tangent category $[\VF_2(\X,\TT),(\X,\TT)]$, therefore, we can take the Lie bracket of the two vector fields $\Univ v_1$ and $\Univ v_2$ over $\U_2=\U\o\pi_1=\U\o\pi_2$ and define a new vector field $[\Univ v_1,\Univ v_2]$. By the universal property of vector fields, this corresponds to a tangent morphism:
\begin{align*}
&[,]\colon\VF_2(\X,\TT)\to\VF(\X,\TT)
\end{align*}
In the context of tangent category theory $[,]$ sends a tuple $(M;u,v)$ formed by an object $M$ of $(\X,\TT)$ and by two vector fields $u,v\colon M\to\T M$ of $M$ to the vector field $(M,[u,v])$.

\begin{remark}
\label{remark:functoriality-Lie-bracket}
In the context of tangent category theory $[,]$ is, by construction, a functor. So, in particular, a morphism $f\colon(M;v_1,v_2)\to(N;u_1,u_2)$ of $\VF_2(\X,\TT)$ is sent to a morphism:
\begin{align*}
[,](f)&\colon(M,[v_1,v_2])\to(N,[u_1,u_2])
\end{align*}
However, a morphism of $\VF_2(\X,\TT)$ is a morphism $f\colon M\to N$ which is a morphism of vector fields $f\colon(M,v_1)\to(M,u_1)$ and $f\colon(M,v_2)\to(N,u_2)$. $[,]$ sends a morphism $f$ to itself. Therefore, the functoriality of $[,]$ is equivalent to the following statement in differential geometry: if two pairs of vector fields $v_1,u_1$ and $v_2,u_2$ are $f$-related, namely, $f$ is a morphism of vector fields $f\colon(M,v_1)\to(N,u_1)$ and $f\colon(M,v_2)\to(N,u_2)$, the vector fields $[v_1,v_2]$ and $[u_1,u_2]$ are also $f$-related. This was proven in tangent category theory in~\cite[Proposition~2.9]{cockett:differential-equations}. The novelty provided by the formal approach is in the simplicity of the proof. While in the original proof this was shown explicitly, from the formal point of view, this is a direct consequence of $[,]$ being a functor. This shows that the formal approach can also be adopted to prove results in tangent category theory.
\end{remark}

\begin{lemma}
\label{lemma:lie-bracket-is-strict}
The tangent morphism $[,]\colon\VF_2(\X,\TT)\to\VF(\X,\TT)$ is strict.
\end{lemma}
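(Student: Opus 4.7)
The plan is to follow the template established in Lemmas~\ref{lemma:0-is-strict}, \ref{lemma:+-is-strict}, and \ref{lemma:--is-strict}. The strict tangent isomorphism
\begin{align*}
&\Gamma_{\Univ v}\colon[\VF_2(\X,\TT)\|\VF(\X,\TT)]\xrightarrow{\cong}\VF[\VF_2(\X,\TT)\|\X,\TT]
\end{align*}
sends $[,]$ to $[\Univ v_1,\Univ v_2]$ by the very definition of $[,]$, and hence sends $(\T^\VF,c^\VF)\o[,]$ to $\bar\T[\Univ v_1,\Univ v_2]$ by strictness. Invertibility of $\Gamma_{\Univ v}$ therefore reduces the claim to the equality
\begin{align*}
&\Gamma_{\Univ v}\bigl([,]\o(\T^{\VF_2},c^{\VF_2})\bigr)=\bar\T[\Univ v_1,\Univ v_2].
\end{align*}

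For this, I would first unfold $\Gamma_{\Univ v}([,]\o(\T^{\VF_2},c^{\VF_2}))$ as the Lie bracket $[\Univ v_1,\Univ v_2]$ right-whiskered by $(\T^{\VF_2},c^{\VF_2})$. Using, as in Lemma~\ref{lemma:+-is-strict}, the identities $\Univ v_k=\Gamma_{\Univ v}(\pi_k)$, the strictness of $\U$, and the tangent-naturality of $\Univ v$, each individual $(\Univ v_k)_{(\T,c)}$ rewrites as $\bar\T\Univ v_k$. It then remains to interchange the right-whiskering with the bracket and to pull $\bar\T$ through the bracket, collapsing the left-hand side to $\bar\T[\Univ v_1,\Univ v_2]$. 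The same template, with the structural $2$-morphisms $p^{\VF_2}$, $z^{\VF_2}$, $s^{\VF_2}$, $l^{\VF_2}$, $c^{\VF_2}$, and (when negatives are available) $n^{\VF_2}$ in place of $(\T^{\VF_2},c^{\VF_2})$, delivers the remaining strictness conditions.

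The principal obstacle is precisely these two interchanges, namely that right-whiskering by a strict tangent morphism, and that the tangent bundle functor of a Hom-tangent category with negatives, each preserve the Lie bracket on vector fields. Whereas Lemmas~\ref{lemma:+-is-strict} and~\ref{lemma:--is-strict} only required the preservation of $s$ and $n$, which are part of the tangent structure, the Lie bracket is a \emph{derived} operation built from the formula $[u,v]=\{\T v\o u-c\o\T u\o v\}$. Establishing the two interchanges therefore amounts to unwinding this formula and tracking each ingredient --- $l$, $c$, $s$, $n$, and the morphism $\{-\}$ --- under both operations, which reduces to naturality of the structural $2$-morphisms together with the uniqueness clause of the universal property of the vertical lift defining $\{-\}$. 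I would isolate this compatibility as an auxiliary lemma about tangent categories with negatives; once available, the proof of the present lemma becomes entirely formal, following the pattern of Lemma~\ref{lemma:+-is-strict} verbatim.
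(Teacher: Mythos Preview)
Your proposal is correct and follows exactly the approach the paper intends: the paper's own proof is simply ``We leave to the reader to complete the details of this proof, since it is substantially the same proof as the ones of Lemmas~\ref{lemma:0-is-strict},~\ref{lemma:+-is-strict}, and~\ref{lemma:--is-strict}.'' You have correctly identified that the only new ingredient beyond those lemmas is verifying that the Lie bracket, being a derived operation built from $l$, $c$, $s$, $n$, and the bracketing $\{-\}$, is preserved by right-whiskering with a strict tangent morphism and by the tangent bundle functor $\bar\T$ of the Hom-tangent category; your plan to reduce this to naturality of the structural $2$-morphisms and uniqueness in the universal property of the vertical lift is exactly what is required, and is indeed routine once spelled out.
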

\begin{proof}
We leave to the reader to complete the details of this proof, since it is substantially the same proof as the ones of Lemmas~\ref{lemma:0-is-strict},~\ref{lemma:+-is-strict}, and~\ref{lemma:+-is-strict}.
\end{proof}

In order to state the main result of this section, first we need to introduce the notion of a Lie algebra bundle in a category $\X$.

\begin{definition}
\label{definition:lie-algebra-bundle}
A \textbf{Lie algebra bundle} in a category $\X$ consists of an additive bundle $\q\colon E\to B$ together with a morphism $[,]_q\colon E_2\to E$ which satisfies the following conditions
\begin{equation*}
\begin{tikzcd}
{E_3} & {E_2} \\
{E_2} & E
\arrow["{s_q\times_B\id_E}", from=1-1, to=1-2]
\arrow["{[,]_q\times_B\id_E}"', from=1-1, to=2-1]
\arrow["{[,]_q}", from=1-2, to=2-2]
\arrow["{s_q}"', from=2-1, to=2-2]
\end{tikzcd}\hfill\quad
\begin{tikzcd}
E & {E_2} \\
B & E
\arrow["{\<z_q\o q,\id_E\>}", from=1-1, to=1-2]
\arrow["q"', from=1-1, to=2-1]
\arrow["{[,]_q}", from=1-2, to=2-2]
\arrow["{z_q}"', from=2-1, to=2-2]
\end{tikzcd}\hfill\quad
\begin{tikzcd}
{E_2} & {E_2} \\
B & E
\arrow["{\<[,]_q,[,]_q\o\tau\>}", from=1-1, to=1-2]
\arrow["q"', from=1-1, to=2-1]
\arrow["{s_q}", from=1-2, to=2-2]
\arrow["{z_q}"', from=2-1, to=2-2]
\end{tikzcd}
\end{equation*}
\begin{equation*}
\begin{tikzcd}
{E_3} &&& {E_3} & {E_2} \\
B &&&& E
\arrow["{\<[[,]_q,]_q,[[,]_q,]_q\.\sigma,[[,]_q,]_q\sigma^2\>}", from=1-1, to=1-4]
\arrow["q"', from=1-1, to=2-1]
\arrow["{s_q\times_B\id_E}", from=1-4, to=1-5]
\arrow["{s_q}", from=1-5, to=2-5]
\arrow["{z_q}"', from=2-1, to=2-5]
\end{tikzcd}
\end{equation*}
where $[[,]_q,]_q$ denotes
\begin{align*}
&[[,]_q,]_q\colon E_3\xrightarrow{[,]_q\times_B\id_E}E_2\xrightarrow{[,]_q}E
\end{align*}
and $\sigma$ denotes the ciclic permutation $(1\quad 2\quad 3)$ whose action is induced by the canonical symmetry $\tau\colon E\times_BE'\to E'\times_BE$.
\end{definition}

The first two diagrams in Definition~\ref{definition:lie-algebra-bundle} represent left additivity of the Lie bracket:
\begin{align*}
&[x+_qy,z]_q=[x,z]_q+_q[y,z]_q\\
&[0_q,x]_q=0_q
\end{align*}
The third diagram represents antisymmetry:
\begin{align*}
&[x,y]_q+[y,x]_q=0_q
\end{align*}
Finally, the last diagram represents the Jacobi identity:
\begin{align*}
&[[x,y]_q,z]_q+[[z,x]_qy]_q+[[y,z]_q,x]_q=0_q
\end{align*}

\begin{theorem}
\label{theorem:Lie-algebra-vector-fields}
Consider a $2$-category $\CC$ which admits the construction of vector fields. Let us also assume the existence of $n$-fold $2$-pullbacks of $\U\colon\VF(\X,\TT)\to(\X,\TT)$ along itself in $\CC$. For each tangentad $(\X,\TT)$ which admits negatives, the abelian group bundle $(\U\colon\VF(\X,\TT),0,+,-)$ of Theorem~\ref{theorem:abelian-group-vector-fields} comes with a Lie bracket $[,]\colon\VF_2(\X,\TT)\to\VF(\X,\TT)$ which makes $\U$ into a Lie algebra bundle in $\Tng_=\CC$.
\end{theorem}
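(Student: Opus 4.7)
The plan follows the template of Theorems~\ref{theorem:commutative-monoid-vector-fields} and~\ref{theorem:abelian-group-vector-fields}. The candidate Lie bracket $[,]\colon\VF_2(\X,\TT)\to\VF(\X,\TT)$ has already been constructed in the discussion preceding the statement from the vector field $[\Univ v_1,\Univ v_2]$ in the Hom-tangent category $\VF[\VF_2(\X,\TT)\|\X,\TT]$ via the universal property of vector fields, and its strictness is Lemma~\ref{lemma:lie-bracket-is-strict}. What remains is to verify that the abelian group bundle $(\U,0,+,-)$ of Theorem~\ref{theorem:abelian-group-vector-fields} equipped with $[,]$ satisfies the four axioms of Definition~\ref{definition:lie-algebra-bundle}: left additivity of the bracket, vanishing on the zero section, antisymmetry, and the Jacobi identity.

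My approach is to translate each axiom, via the invertibility of $\Gamma_{\Univ v}$, into an equation of vector fields in the appropriate Hom-tangent category $\VF[\VF_n(\X,\TT)\|\X,\TT]$ for $n=1,2,3$. Concretely, applying $\Gamma_{\Univ v}$ to both sides of the commuting diagrams required of $[,]$ reduces the problem to the identities
\begin{align*}
&[\Univ v_1+\Univ v_2,\Univ v_3]=[\Univ v_1,\Univ v_3]+[\Univ v_2,\Univ v_3],\\
&[z,\Univ v]=z,\\
&[\Univ v_1,\Univ v_2]+[\Univ v_2,\Univ v_1]=z,\\
&[[\Univ v_1,\Univ v_2],\Univ v_3]+[[\Univ v_3,\Univ v_1],\Univ v_2]+[[\Univ v_2,\Univ v_3],\Univ v_1]=z,
\end{align*}
where the $\Univ v_k$ are the vector fields induced by the projections $\pi_k\colon\VF_n(\X,\TT)\to\VF(\X,\TT)$, and the sum and bracket take place over the common base $\U_n=\U\o\pi_1=\dots=\U\o\pi_n$ in the Hom-tangent category with negatives.

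The main obstacle --- really the only nontrivial ingredient --- is the classical fact that the bracket $[\cdot,\cdot]$ of Section~\ref{subsection:tangent-category-vector-fields} satisfies the Lie algebra axioms on vector fields of any tangent category with negatives, as established in~\cite{cockett:tangent-cats} and~\cite{cockett:differential-equations}. By Proposition~\ref{proposition:hom-tangent-categories}, each Hom-tangent category $[\VF_n(\X,\TT)\|\X,\TT]$ inherits negatives from $(\X,\TT)$ and is itself a tangent category with negatives, and the vertical lift, canonical flip, negation, and the universal property of the vertical lift used in the classical proof all exist and are universal pointwise there. Hence the classical argument transfers verbatim, yielding the four identities above and concluding the proof. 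This is exactly the simplification promised by the formal approach: a single identity for ordinary vector fields becomes, via universality, the corresponding identity for vector fields in an arbitrary tangentad.
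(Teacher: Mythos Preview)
Your proposal is correct and follows essentially the same approach as the paper: reduce the Lie algebra bundle axioms, via the universal property of vector fields, to the four classical identities among the $\Univ v_k$ in the Hom-tangent categories with negatives, and then invoke the known results for ordinary tangent categories. The only minor discrepancy is bibliographic: the paper cites \cite[Theorem~3.17]{cockett:tangent-cats} and \cite[Theorem~4.2]{cockett:jacobi} for these identities (the Jacobi identity is proved in the latter), whereas you point to \cite{cockett:differential-equations}.
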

\begin{proof}
By the universal property of vector fields, the conditions required for $(\U,0,+,-,[,])$ to be a Lie algebra bundle are equivalent to the following equations of vector fields
\begin{align*}
&[\Univ v_1+\Univ v_2,\Univ v_3]=[\Univ v_1,\Univ v_3]+[\Univ v_2,\Univ v_3]\\
&[z,\Univ v]=z\\
&[\Univ v_1,\Univ v_2]+[\Univ v_2,\Univ v_1]=z\\
&[[\Univ v_1,\Univ v_2],\Univ v_3]+[[\Univ v_3,\Univ v_1],\Univ v_2]+[[\Univ v_2,\Univ v_3],\Univ v_1]=z
\end{align*}
which hold by~\cite[Theorem~3.17]{cockett:tangent-cats} and by~\cite[ Theorem~4.2]{cockett:jacobi}.
\end{proof}

\subsubsection*{The monad of vector fields}
\label{subsubsection:monad-vector-fields}
In this section we harness the additive bundle structure of Theorem~\ref{theorem:commutative-monoid-vector-fields} to define a $2$-monad structure on the $2$-endofunctor $\VF\colon\Tng(\CC)\to\Tng(\CC)$. For starters, let us assume the existence of the $n$-fold pullbacks $\VF_n(\X,\TT)$ in $\CC$. It is not hard to see that each $\VF_n$ extends to a $2$-functor. Let us define:
\begin{align*}
&\eta\colon(\X,\TT)\xrightarrow{0}\VF(\X,\TT)\\
&\mu\colon\VF^2(\X,\TT)\xrightarrow{\<\U_\VF,\VF\U\>}\VF_2(\X,\TT)\xrightarrow{+}\VF(\X,\TT)
\end{align*}

\begin{theorem}
\label{theorem:monad-structure-vector-fields}
The $2$-endofunctor $\VF\colon\Tng(\CC)\to\Tng(\CC)$ together with the $2$-natural transformations
\begin{align*}
\eta_{(\X,\TT)}\colon(\X,\TT)\to\VF(\X,\TT) &\mu_{(\X,\TT)}&\colon\VF^2(\X,\TT)\to\VF(\X,\TT)
\end{align*}
defines a $2$-monad on $\Tng(\CC)$.
\end{theorem}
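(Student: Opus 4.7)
The plan is to exploit the universal property of vector fields (Theorem~\ref{theorem:universality-vector-fields}, or rather its formal version in Definition~\ref{definition:construction-vector-fields}) to translate every claim about tangent morphisms landing in $\VF(\X,\TT)$ into an equivalent claim about vector fields in an appropriate Hom-tangent category, where all the needed equations have already been established in Theorems~\ref{theorem:commutative-monoid-vector-fields} and~\ref{theorem:abelian-group-vector-fields}. Concretely, two parallel tangent morphisms $F,G\colon (\Y,\TT_\Y)\to\VF(\X,\TT)$ are equal precisely when the vector fields $\Gamma_{\Univ v}(F)$ and $\Gamma_{\Univ v}(G)$ in $\VF[\Y,\TT_\Y\|\X,\TT]$ agree.

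First I would establish the $2$-naturality of $\eta$ and $\mu$. For $\eta=0$, naturality in $(\X,\TT)$ reduces, via $\Gamma_{\Univ v}$, to the fact that the zero $2$-morphism $z$ is strict-natural with respect to any lax tangent $1$-morphism $(F,\alpha)$, which is part of the definition of a lax tangent morphism; $2$-naturality (compatibility with tangent $2$-morphisms) follows for the same reason. For $\mu$, one observes that $\<\U_{\VF},\VF\U\>\colon\VF^2(\X,\TT)\to\VF_2(\X,\TT)$ is well defined because $\U\o\U_{\VF}=\U\o\VF\U$ by the strictness of $\U$ (Lemma~\ref{lemma:0-is-strict}-style argument) and by functoriality of $\VF$, and both components are $2$-natural; post-composing with the strict morphism $+$ of Lemma~\ref{lemma:+-is-strict} preserves $2$-naturality.

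Next I would verify the three monad axioms. By the universal property, each axiom is equivalent to an equation of vector fields in a suitable Hom-tangent category. For the left unit $\mu\o\eta_{\VF}=\id_{\VF}$, applying $\Gamma_{\Univ v}$ yields $z+\Univ v=\Univ v$, which is the unit axiom already verified in Theorem~\ref{theorem:commutative-monoid-vector-fields}. For the right unit $\mu\o\VF\eta=\id_{\VF}$, one uses that $\VF\eta$ corresponds under the universal property to the vector field obtained from $\eta$, together with the fact that $\VF(0)$ inserts a zero vector field on the second ``layer'', producing again $\Univ v+z=\Univ v$. For associativity $\mu\o\mu_{\VF}=\mu\o\VF\mu$, both sides translate (after unpacking the pairings and using $2$-naturality of $\U$ and functoriality of $\VF$) to the associativity equation
\begin{equation*}
(\Univ v_1+\Univ v_2)+\Univ v_3=\Univ v_1+(\Univ v_2+\Univ v_3)
\end{equation*}
for the three tautological vector fields in $\VF[\VF^3(\X,\TT)\|\X,\TT]$ coming from the three iterated forgetfuls $\VF^3(\X,\TT)\to\VF(\X,\TT)$, which again is part of Theorem~\ref{theorem:commutative-monoid-vector-fields}.

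The main obstacle I anticipate is the bookkeeping step in associativity: identifying the three ``universal vector fields'' on $\VF^3(\X,\TT)$ that arise on each side of the equation. On the $\mu\o\mu_{\VF}$ side they come from $\U_{\VF^2}$, $\VF\U\o\U_{\VF}$ and $\VF^2\U$, whereas on the $\mu\o\VF\mu$ side they appear (after rewriting $\VF\mu$ via $\VF(+)\o\VF\<\U_\VF,\VF\U\>$) as $\U_{\VF^2}$, $\VF\U_{\VF}$ and $\VF^2\U$; here one must invoke the strict $2$-naturality of $\U$ and the $2$-functoriality of $\VF$ to match the middle entries. Once this identification is made, the three axioms collapse to the already-known commutative monoid identities, and the proof is complete.
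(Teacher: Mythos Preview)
Your proposal is correct and follows the same route as the paper: the monad laws reduce, via the universal property of $\VF$, to the commutative monoid identities $z+\Univ v=\Univ v$, $\Univ v+z=\Univ v$, and associativity, which are exactly the additive bundle axioms of Theorem~\ref{theorem:commutative-monoid-vector-fields}. The paper's proof is in fact even more terse than yours---it simply asserts that unitality and associativity are ``a direct consequence of $(\U,0,+)$ being an additive bundle'' and leaves the details to the reader---so your outline is a faithful expansion of what the paper sketches. One small remark: the reference to Theorem~\ref{theorem:abelian-group-vector-fields} is superfluous, since negatives play no role in the monad axioms.
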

\begin{proof}
Unitality and associativity of $(\VF,\eta,\mu)$ are a direct consequence of $(\U,0,+)$ being an additive bundle. We leave the reader to complete the details.
\end{proof}

In the context of tangent category theory, the $2$-monad structure of $\VF$ is the defined as follows. The unit is equal to the zero $0\colon(\X,\TT)\to\VF(\X,\TT)$ which sends each object $M$ of a tangent category to its zero vector field $(M,z_M)$. To understand the multiplication, first recall that a vector field $v\colon(M,u)\to\T^\VF(M,u)$ in the tangent category $\VF(\X,\TT)$ consists of a pair of vector fields $u,v\colon M\to\T M$ of an object $M$ of $(\X,\TT)$, which commute to each other, namely:
\begin{equation*}
\begin{tikzcd}
{\T M} & {\T^2M} \\
& {\T^2M} \\
M & {\T M}
\arrow["{\T v}", from=1-1, to=1-2]
\arrow["c"', from=2-2, to=1-2]
\arrow["u", from=3-1, to=1-1]
\arrow["v"', from=3-1, to=3-2]
\arrow["{\T u}"', from=3-2, to=2-2]
\end{tikzcd}
\end{equation*}
Therefore, the multiplication $\mu\colon\VF^2(\X,\TT)\to\VF(\X,\TT)$ sends a pair of commutative vector fields $(u,v)$ on an object $M$ of $(\X,\TT)$ to the vector field $(M,u+v)$.

\begin{remark}
\label{remark:future-work-algebras-VF}
In future work, we would like to study and classify the (lax/colax/pseudo-)algebras of the $2$-monad $\VF$.
\end{remark}


\section{Vector fields via PIE limits}
\label{section:PIE-limits}
PIE limits, namely, the $2$-limits generated by $2$-Products, Inserters, and Equifiers, are a special class of weighted limits in the context of $2$-category theory. They were introduced in~\cite{power:pie-limits} to construct the Eilenberg-Moore object $\X^S$ of a monad $(\X,S)$ internal to a $2$-category. Here, we achieve a similar result for the formal theory of tangentads: we use (some) (P)IE limits to construct vector fields.
\par In general, the $2$-category $\Tng(\CC)$ of tangentads of $\CC$ does not admit PIE limits, even when the base $2$-category $\CC$ does so, the main obstruction being the non-invertibility of the distributive law $\alpha$ of the $1$-morphisms $(F,\alpha)$ of $\Tng(\CC)$. However, we show that, when we consider strong tangent morphisms, one can lift PIE limits from the base $2$-category to the $2$-category of tangentads.

\subsection{Products}
\label{subsection:products}
For starters, let us consider $2$-products. It is not hard to believe that if a $2$-category admits $2$-products, so does the $2$-category $\Tng(\CC)$ of tangentads.

\begin{proposition}
\label{proposition:2-products}
Let $(\X,\TT)$ and $(\X',\TT')$ denote two tangentads of $\CC$. If the $2$-product $\X\times\X'$ between the underlying objects $\X$ and $\X'$ exists in $\CC$, then $\X\times\X'$ comes equipped with a tangent structure $\TT\times\TT'$ and the corresponding tangentad $(\X\times\X',\TT\times\TT')$ is the $2$-product of $(\X,\TT)$ and $(\X',\TT')$ in $\Tng(\CC)$. In particular, if $\CC$ admits all $2$-products so does $\Tng(\CC)$ and are preserved by the forgetful $2$-functor $\Tng(\CC)\to\CC$. Moreover, the projections $\pi_1\colon(\X\times\X',\TT\times\TT')\to(\X,\TT)$ and $\pi_2\colon(\X\times\X',\TT\times\TT')\to(\X',\TT')$ are strict tangent morphisms.
\end{proposition}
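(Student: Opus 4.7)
The plan is to produce a tangent structure $\TT\times\TT'$ on $\X\times\X'$ by assembling a Leung functor from $\Leung[\TT]$ and $\Leung[\TT']$ via the $2$-universal property of the product, and then to verify that the resulting tangentad enjoys the $2$-universal property of a product in $\Tng(\CC)$.

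First, by the universal property of $\X\times\X'$, I would exhibit a strict monoidal functor
$$\End_\CC(\X)\times\End_\CC(\X')\to\End_\CC(\X\times\X')$$
sending a pair $(F,F')$ to the unique $1$-endomorphism $F\times F'$ determined by $\pi_1\o(F\times F')=F\o\pi_1$ and $\pi_2\o(F\times F')=F'\o\pi_2$. Strict monoidality follows from $(F\times F')\o(G\times G')=(F\o G)\times(F'\o G')$, which is verified by postcomposing with the projections. I would then define
$$\Leung[\TT\times\TT']\colon\Weil\xrightarrow{\Delta}\Weil\times\Weil\xrightarrow{\Leung[\TT]\times\Leung[\TT']}\End_\CC(\X)\times\End_\CC(\X')\to\End_\CC(\X\times\X'),$$
where $\Delta$ is the (strict monoidal) diagonal. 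The composite is strong monoidal, and its action on the generators $p,z,s,l,c$ (and $n$, when present) yields the structural $2$-morphisms of the candidate tangent structure $\TT\times\TT'$. The projections are automatically strict tangent morphisms from the defining equations $\pi_1\o(\T\times\T')=\T\o\pi_1$ and $\pi_2\o(\T\times\T')=\T'\o\pi_2$.

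To verify preservation of the pointwise fundamental tangent limits, I would use that for every $1$-morphism $H\colon\Y\to\X\times\X'$ in $\CC$, the canonical isomorphism $\CC(\Y,\X\times\X')\cong\CC(\Y,\X)\times\CC(\Y,\X')$ computes limits componentwise. The required pullback preservation for the composite $\Weil\to\End_\CC(\X\times\X')\to\CC(\Y,\X\times\X')$ then reduces to the analogous condition for $\Leung[\TT]$ along $\pi_1\o H$ and for $\Leung[\TT']$ along $\pi_2\o H$, both of which hold by hypothesis.

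For the $2$-universal property in $\Tng(\CC)$, given lax tangent morphisms $(F,\alpha)\colon(\Y,\TT'')\to(\X,\TT)$ and $(G,\beta)\colon(\Y,\TT'')\to(\X',\TT')$, the $1$-dimensional universal property of the $2$-product produces a unique $H\colon\Y\to\X\times\X'$ with $\pi_1 H=F$ and $\pi_2 H=G$, while the $2$-dimensional universal property produces a unique $2$-cell $\gamma\colon H\o\T''\Rightarrow(\T\times\T')\o H$ whose projections along $\pi_1$ and $\pi_2$ are $\alpha$ and $\beta$. The axioms of a lax tangent morphism for $(H,\gamma)$ reduce, via the same universal property, to those for $(F,\alpha)$ and $(G,\beta)$. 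The analogous argument applies to tangent $2$-cells, since a $2$-morphism into $\X\times\X'$ is determined by its projections. The construction extends verbatim to arbitrary $2$-products, proving the general statement and the preservation by the forgetful $2$-functor. I expect the main obstacle to be purely bookkeeping: the distributive-law axioms of Definition~\ref{definition:tangent-morphisms} have to be matched componentwise, which is routine but requires careful application of the uniqueness clause of the universal property of $2$-products.
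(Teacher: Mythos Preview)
Your proposal is correct and follows essentially the same strategy as the paper: both construct the Leung functor for $\TT\times\TT'$ by factoring through the strict monoidal functor $\End_\CC(\X)\times\End_\CC(\X')\to\End_\CC(\X\times\X')$ obtained from the $2$-product. Where the paper concludes the $2$-universal property by observing that $\Leung[\TT\times\TT']$ is the product of $\Leung[\TT]$ and $\Leung[\TT']$ in the category of Leung functors, you spell this out directly at the level of $1$- and $2$-cells in $\Tng(\CC)$; this is the same argument unpacked, and your more explicit treatment of pointwise limit preservation via $\CC(\Y,\X\times\X')\cong\CC(\Y,\X)\times\CC(\Y,\X')$ is a welcome clarification of a step the paper leaves implicit.
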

\begin{proof}
Let us start by showing that $\X\times\X'$ comes equipped with a tangent structure. First, notice that, from the universal property of the $2$-product $\X\times\X'$, we can construct a functor
\begin{align*}
&\End_\CC(\X)\times\End_\CC(\X')\to\End_\CC(\X\times\X')
\end{align*}
which sends a pair $(A,B)$ of endomorphisms $A\colon\X\to\X$ and $B\colon\X'\to\X'$ to the endomorphism\linebreak $A\times B\colon\X\times\X'\to\X\times\X'$ and a pair $(\varphi,\psi)$ of $2$-morphisms $\varphi\colon A\Rightarrow A'\colon\X\to\X$, $\psi\colon B\Rightarrow B'\colon\X'\to\X'$ to the $2$-morphism $\varphi\times\psi\colon A\times B\Rightarrow A'\times B'$. It is also easy to check that such a functor preserves the monoidal structure given by the composition of endomorphisms. Finally, if $c\mapsto(D_1(c),D_2(c))$ is a diagram in $\End_\CC(\X)\times\End_\CC(\X')$ and $(L_1,L_2)\to(D_1(c),D_2(c))$ is a limit cone for such a diagram, the corresponding morphism $L_1\times L_2\to D_1(c)\times D_2(c)$ is a limit cone for the diagram $c\mapsto D_1(c)\times D_2(c)$ of $\End_\CC(\X\times\X')$. Since $(\X,\TT)$ and $(\X',\TT')$ are tangentads, we can consider their Leung functors $\Leung[\TT]\colon\Weil\to\End_\CC(\X)$ and $\Leung[\TT']\colon\Weil\to\End_\CC(\X')$. Therefore, we construct a new Leung functor
\begin{align*}
\Leung[\TT\times\TT']&\colon\Weil\xrightarrow{\left\<\Leung[\TT],\Leung[\TT']\right\>}\End_\CC(\X)\times\End_\CC(\X')\to\End_\CC(\X\times\X')
\end{align*}
which defines a tangent structure $\TT\times\TT'$ for $\X\times\X'$. Furthermore, by construction, $\Leung[\TT\times\TT']$ is precisely the product of $\Leung[\TT]$ and $\Leung[\TT']$ in the category of Leung functors, whose objects are Leung functors and morphisms are monoidal natural transformations between the underlying strong monoidal functors. Thus, $(\X\times\X',\TT\times\TT')$ is the $2$-product of $(\X,\TT)$ and $(\X',\TT')$ in $\Tng(\CC)$.
\end{proof}

\subsection{Inserters}
\label{subsection:inserters}
In this section, we discuss the existence of inserters in the context of tangentads. First, let us recall the definition of an inserter in a $2$-category $\CC$. Consider a pair of $1$-morphisms $F,G\colon\X\to\X'$ and an object $\Y$ of $\CC$ and denote by $\Insert[\Y;F,G]$ the category whose objects are $1$-morphisms $H\colon\Y\to\X$ of $\CC$ together with a $2$-morphism $\tau\colon F\o H\Rightarrow G\o H$ and whose morphisms $\varphi\colon(H,\tau)\to(H',\tau')$ are $2$-morphisms $\varphi\colon H\Rightarrow H'$ of $\CC$ which commute with $\tau$ and $\tau'$, namely, making the following diagram commute:
\begin{equation*}
\begin{tikzcd}
{F\o H} & {F\o H'} \\
{G\o H} & {G\o H'}
\arrow["{F\varphi}", from=1-1, to=1-2]
\arrow["\tau"', from=1-1, to=2-1]
\arrow["{\tau'}", from=1-2, to=2-2]
\arrow["{G\varphi}"', from=2-1, to=2-2]
\end{tikzcd}
\end{equation*}
An inserter $\I\colon F\Rightarrow G$ of a pair of parallel $1$-morphisms $F,G\colon\X\to\X'$ of a $2$-category $\CC$ consists of an object $\I$ together with a $1$-morphism $V\colon\I\to\X$ and a $2$-morphism
\begin{equation*}
\begin{tikzcd}
\I & \X \\
\X & {\X'}
\arrow["V", from=1-1, to=1-2]
\arrow["V"', from=1-1, to=2-1]
\arrow["{\hat\theta}"{description}, Rightarrow, from=1-2, to=2-1]
\arrow["F", from=1-2, to=2-2]
\arrow["G"', from=2-1, to=2-2]
\end{tikzcd}
\end{equation*}
such that for each object $\Y$ of $\CC$, the induced functor
\begin{align*}
\Gamma_\I&\colon\CC[\Y,\I]\to\Insert[\Y;F,G]
\end{align*}
which sends each $1$-morphism $H\colon\Y\to\I$ to $(V\o H\colon\Y\to\X,\hat\theta_H)$ and each $2$-morphism $\psi\colon H\Rightarrow H'\colon\Y\to\I$ to $\psi_V\colon(V\o H,\hat\theta_H)\to(V\o H',\hat\theta_{H'})$ is an isomorphism of categories.
\par In general, there is no reason to believe that the $2$-category $\Tng(\CC)$ would admit all inserters even when $\CC$ does so. To understand why, consider two lax tangent morphisms $(F,\alpha),(G,\beta)\colon(\X',\TT')\to(\X,\TT)$ between two tangent categories. To make the inserter $\I$ of the two functors $F,G\colon\X\to\X'$ into a tangent category, we need to whisker the tangent bundle functor of $(\X,\TT)$ with the distributive laws as follows:
\begin{align*}
&\bar\T(M,u)=(\T M,F\T M\xrightarrow{\alpha}\T'FM\xrightarrow{\T'u}\T'GM\xleftarrow{\beta}G\T M)
\end{align*}
However, the distributive lax $\beta$ is pointing in the wrong direction. The solution is to assume $(G,\beta)$ to be strong. Furthermore, we also need to assume that the functor $G$ preserves the $n$-fold pullback of the projection along itself and the universal property of the vertical lift.

\begin{definition}
\label{definition:tangent-preserving-morphism}
A $1$-morphism $G\colon\X\to\X'$ \textbf{preserves the fundamental tangent limits} of a tangentad $(\X,\TT)$ provided that each functor $\CC[\X\|G,\beta]\colon\End_\CC(\X)\to\CC[\X\|\X']$ preserves the pointwise $n$-fold pullback of the projection $p\colon\T\Rightarrow\id_\X$ along itself and the pointwise pullback of the universality of the vertical lift $l\colon\T\Rightarrow\T^2$ of $(\X,\TT)$. We also say that a tangent morphism $(G,\beta)\colon(\X,\TT)\to(\X',\TT')$ preserves the fundamental tangent limits provided that its underlying $1$-morphism $G$ preserves the fundamental tangent limits of $(\X,\TT)$.
\end{definition}

We start with a lemma.

\begin{lemma}
\label{lemma:inserters}
Consider two parallel lax tangent morphisms $(F,\alpha),(G,\beta)\colon(\X,\TT)\to(\X',\TT')$ of a $2$-category $\CC$ and let $\Y$ be an object of $\CC$. If $(G,\beta)$ is a strong tangent morphism that preserves the fundamental tangent limits, the category $\Insert[\Y;F,G]$ is a tangent category where the tangent bundle functor sends an object $(H\colon\Y\to\X,\tau)$ to $(\T\o H,\beta^{-1}\o\tau_\T\o\alpha)$.
\end{lemma}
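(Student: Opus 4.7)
My plan is to use Leung's perspective and construct a strong monoidal functor $\Leung[\bar\TT]\colon\Weil\to\End(\Insert[\Y;F,G])$ preserving the fundamental tangent limits, and then observe that on the generator $W\in\Weil$ it recovers the tangent bundle functor described in the statement. For each Weil algebra $W$, write $\T^W\=\Leung[\TT](W)$ and $\T'^W\=\Leung[\TT'](W)$. Iterating $\alpha$ and $\beta$ along the monoidal structure of $\Weil$ gives $2$-morphisms $\alpha^W\colon F\o\T^W\Rightarrow\T'^W\o F$ and $\beta^W\colon G\o\T^W\Rightarrow\T'^W\o G$, and crucially $\beta^W$ is invertible since $\beta$ is. I would then declare
\begin{align*}
\Leung[\bar\TT](W)(H,\tau)\=\bigl(\T^W\o H,\;(\beta^W H)^{-1}\o(\T'^W\tau)\o(\alpha^W H)\bigr),
\end{align*}
with action on morphisms of $\Insert[\Y;F,G]$ by whiskering with $\T^W$. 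Naturality of $\alpha^W$ and $\beta^W$ in the first argument makes this assignment functorial, and strong monoidality of $\Leung[\bar\TT]$ is inherited from that of $\Leung[\TT]$ together with the coherence of the iterated distributive laws.

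The next step would be to lift the generating $2$-morphisms $p,z,s,l,c$ (and $n$ when negatives are present) of $\Weil$ to $2$-morphisms of $\End(\Insert[\Y;F,G])$ by whiskering on the right. The only content of this step is that the whiskered $2$-morphism, evaluated at an object $(H,\tau)$, is a morphism of $\Insert[\Y;F,G]$, and this unpacks exactly to the additivity and the compatibility of $\alpha$ and $\beta$ with the vertical lift and the canonical flip from Definition~\ref{definition:tangent-morphisms}; one uses invertibility of $\beta$ to move all $\beta^{-1}$'s across to produce the required square. Once $\Leung[\bar\TT]$ is defined, the tangent category axioms for $\bar\TT$ follow from the corresponding axioms for $\TT$, since the forgetful functor $V\colon\Insert[\Y;F,G]\to\CC[\Y,\X]$ strictly commutes with every piece of structure involved.

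The main obstacle is the verification of the fundamental tangent limits, and this is where the hypothesis that $G$ preserves them is used in an essential way. For the $n$-fold pullback of $\bar p$, I would take the pointwise pullback $\T_n\o H$ of $pH$ in $\CC[\Y,\X]$, which exists since $(\X,\TT)$ is a tangentad. Preservation by $G$ means $G\o\T_n\o H$ is the pointwise $n$-fold pullback of $GpH$; together with the invertibility of $\beta$, this identifies $G\o\T_n\o H$ with $\T'_n\o G\o H$, so the cone $\{(\beta^{-1} H)\o(\T'\tau)\o(\alpha H)\o(F\pi_k H)\colon F\o\T_n\o H\to G\o\T H\}_{k=1,\dots,n}$ factors uniquely through a $2$-morphism $\tau_n\colon F\o\T_n\o H\Rightarrow G\o\T_n\o H$, and $(\T_n\o H,\tau_n)$ with the whiskered projections is readily checked to be the pointwise $n$-fold pullback in $\Insert[\Y;F,G]$. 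Preservation of this pullback by iterates of $\bar\T$ reduces, after forgetting along $V$, to preservation by iterates of $\T$, which holds by assumption. The pullback for the universal property of the vertical lift is treated identically, using instead the preservation by $G$ of the pointwise pullback from Definition~\ref{definition:tangentad}(3).
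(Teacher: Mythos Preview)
Your proposal is correct and follows essentially the same route as the paper. The paper phrases the argument more briefly by observing that $\Insert[\Y;F,G]$ sits inside the Hom-tangent category $[\Y\|\X,\TT]$ via the forgetful functor, and then checks that the pointwise tangent structure restricts: the tangent bundle functor and the structural $2$-morphisms land in $\Insert[\Y;F,G]$ by the compatibility axioms on $\alpha$ and $\beta$, and the fundamental limits are reflected because the paper writes down explicit pairings $\alpha_n$ and $\beta_n^{-1}$ (the latter using preservation by $G$), whereas you obtain the inserted $2$-morphism $\tau_n$ by invoking the universal property of $G\o\T_n\o H$ directly; the two descriptions agree. Your Leung-functor packaging is a bit more explicit about where each hypothesis enters, but there is no substantive difference in content.
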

\begin{proof}
Let $\Y$ be an object of $\CC$. The category $\Insert[\Y;F,G]$ is a subcategory of the Hom-category $\CC[\Y,\X]$, which is a tangent category with the pointwise tangent structure induced by $(\X,\TT)$. Consider an object $(H,\tau)$ of $\Insert[\Y;F,G]$. It is clear that $(\T\o H,\beta^{-1}\o\tau_\T\o\alpha)$ is also an object of $\Insert[\Y;F,G]$. Moreover, by the compatibilities between the distributive laws $\alpha$ and $\beta$ with the tangent structures, it is easy to see that the structural natural transformations of the Hom-tangent category $[\Y\|\X,\TT]$ restrict to $\Insert[\Y;F,G]$. Moreover, since the fundamental tangent limits of $(\X,\TT)$, namely, the $n$-fold pullback of the projection along itself and the universality of the vertical lift, are pointwise limits, they are preserved by pre-composition with any $H\colon\Y\to\X$, thus, $(\T_n\o H,\beta^{-1}_n\o\tau_{\T_n}\o\alpha_n)$ is the $n$-fold pullback of the projection in $\Insert[\Y;F,G]$, where:
\begin{align*}
\alpha_n\=\<\alpha\o F\pi_1\,\alpha\o F\pi_n\>&\colon F\o\T_n\Rightarrow\T'_n\o F\\
\beta_n^{-1}\=\<\beta^{-1}\o{\pi_1}_G,\beta^{-1}\o{\pi_n}_G\>&\colon\T'_n\o G\Rightarrow G\o\T_n
\end{align*}
Similarly, the vertical lift of $\Insert[\Y;F,G]$ is universal. Thus, $\Insert[\Y;F,G]$ is a tangent subcategory of $[\Y\|\X,\TT]$.
\end{proof}

Thanks to Lemma~\ref{lemma:inserters}, we can prove that inserters from lax tangent morphisms to strong tangent morphisms which preserve the fundamental tangent limits exist in $\Tng(\CC)$ provided they exist in $\CC$.

\begin{proposition}
\label{proposition:inserters}
Consider two parallel lax tangent morphisms $(F,\alpha),(G,\beta)\colon(\X,\TT)\to(\X',\TT')$ of a $2$-category $\CC$. Assume also that $(G,\beta)$ is a strong tangent morphism which preserves the fundamental tangent limits. If there exists an inserter $\I\colon F\Rightarrow G$ of $F$ and $G$ in $\CC$, then $\I$ comes equipped with a tangent structure $\TT^\I$ such that the universal $1$-morphism $V\colon\I\to\X$ becomes a strict tangent morphism that preserves the fundamental tangent limits, the universal $2$-morphism $\hat\theta\colon F\o V\Rightarrow G\o G$ becomes a tangent $2$-morphism and $(\I,\TT^\I)\colon(F,\alpha)\Rightarrow(G,\beta)$ becomes an inserter of $(F,\alpha)$ and $(G,\beta)$ in $\Tng(\CC)$. In particular, if $\CC$ admits inserters, so does $\Tng_\cong(\CC)$.
\end{proposition}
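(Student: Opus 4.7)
The plan is to construct $\TT^\I$ from the pointwise tangent structures on the categories $\Insert[\Y; F, G]$ via Corollary~\ref{corollary:tangentad-representable-2-functors}, and then to transfer the universal property from $\CC$ to $\Tng(\CC)$. Lemma~\ref{lemma:inserters} endows each $\Insert[\Y; F, G]$ with a tangent structure, and a direct check shows this assignment is strictly $2$-functorial in $\Y$. Composing with the universal isomorphism $\CC[\Y, \I] \cong \Insert[\Y; F, G]$ yields a lift of $\CC[-, \I]\colon \CC^\op \to \Cat$ along the forgetful $2$-functor $\TngCat \to \Cat$; Corollary~\ref{corollary:tangentad-representable-2-functors} then produces a tangent structure $\TT^\I$ on $\I$. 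Concretely, the tangent bundle $1$-morphism $\T^\I\colon \I \to \I$ is determined by $V \o \T^\I = \T \o V$ and $\hat\theta_{\T^\I} = \beta^{-1} \o \hat\theta_\T \o \alpha$, while $p^\I, z^\I, s^\I, l^\I, c^\I$ are the unique $2$-morphisms whose whiskering by $V$ reproduces $p, z, s, l, c$.

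With this construction in place, $V$ is strict by definition, and the identity above, rearranged as $\beta \o \hat\theta_{\T^\I} = \hat\theta_\T \o \alpha$, is exactly the tangent $2$-morphism condition from Definition~\ref{definition:tangent-2-morphisms} for $\hat\theta\colon (F, \alpha) \o V \Rightarrow (G, \beta) \o V$. Preservation of the fundamental tangent limits by $V$ follows from the pointwise argument in the proof of Lemma~\ref{lemma:inserters}: the preservation hypothesis on $G$ is precisely what guarantees that the $n$-fold projection pullbacks and the vertical lift pullback, computed pointwise in $\CC[\Y, \X]$, actually lie in $\Insert[\Y; F, G]$ and hence are reflected along $V$.

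The universal property in $\Tng(\CC)$ must be verified against an arbitrary tangentad $(\Y, \TT^\Y)$, a lax tangent morphism $(K, \kappa)\colon (\Y, \TT^\Y) \to (\X, \TT)$, and a tangent $2$-morphism $\sigma\colon (F, \alpha) \o (K, \kappa) \Rightarrow (G, \beta) \o (K, \kappa)$. The underlying datum $(K, \sigma)$ in $\CC$ factors uniquely through $\I$ as $V \o H = K$ and $\hat\theta_H = \sigma$; the task is then to equip $H$ with a unique distributive law $\eta\colon H \o \T^\Y \Rightarrow \T^\I \o H$ turning it into a lax tangent morphism $(H, \eta)$ with $V \o (H, \eta) = (K, \kappa)$. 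Since $V \eta$ must coincide with $\kappa$ and $V$ is fully faithful on $2$-cells factoring through $\I$ (by the $2$-dimensional part of the inserter universal property), $\eta$ is uniquely determined once one checks the compatibility between $\kappa$ and $\sigma$; this follows from $\sigma$ being a tangent $2$-morphism combined with the compatibilities of $\kappa$ with $\alpha$ and $\beta$.

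The main obstacle is verifying that the resulting $\eta$ satisfies the lax tangent morphism axioms of Definition~\ref{definition:tangent-morphisms}, namely additivity and compatibility with $l$ and $c$. Each axiom is checked by whiskering with $V$, which turns it into the corresponding axiom for $\kappa$ after pasting with the $\alpha, \beta, \beta^{-1}$ inserted by the definition of $\T^\I$; the strongness of $\beta$ and the preservation hypothesis of Definition~\ref{definition:tangent-preserving-morphism} are both essential, the former to make sense of the inverted distributive law and the latter to ensure that the pullbacks witnessing the axioms are detected by $V$. Functoriality in tangent $2$-morphisms is a routine diagram chase. The concluding statement for $\Tng_\cong(\CC)$ then follows by observing that when $(K, \kappa)$ is strong, the constructed $\eta$ is built entirely from invertible $2$-cells and hence is itself invertible.
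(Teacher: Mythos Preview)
Your proposal is correct and follows essentially the same route as the paper: both arguments use Lemma~\ref{lemma:inserters} to equip each $\Insert[\Y;F,G]$ with a tangent structure, invoke Corollary~\ref{corollary:tangentad-representable-2-functors} on the corepresentable $2$-functor $\Insert[-;F,G]\cong\CC[-,\I]$ to obtain $\TT^\I$, and then lift the distributive law $\kappa$ of a test morphism through the $2$-dimensional universal property of the inserter by recognising $\kappa$ as a morphism of $\Insert[\Y;F,G]$. Your account is somewhat more explicit about why $\hat\theta$ is a tangent $2$-morphism and about the final claim for $\Tng_\cong(\CC)$, but the underlying strategy is identical.
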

\begin{proof}
Thanks to Lemma~\ref{lemma:inserters}, for each object $\Y$ of $\CC$, the category $\Insert[\Y;F,G]$ comes equipped with a tangent structure. This construction extends to a $2$-functor
\begin{align*}
&\Insert[-;F,G]\colon\CC^\op\to\TngCat
\end{align*}
However, by the universal property of the inserter, $\Insert[-;F,G]\cong\CC[-,\I]$ is corepresentable. Therefore, by Corollary~\ref{corollary:tangentad-representable-2-functors}, it is equivalent to a tangent structure $\TT^\I$ on $\I$. Now, consider a lax tangent morphism $(H,\gamma)\colon(\X'',\TT'')\to(\X,\TT)$ together with a tangent $2$-morphism $\tau\colon(F,\alpha)\o(H,\gamma)\Rightarrow(G,\beta)\o(H,\gamma)$. By the universal property of the inserter, there exists a $1$-morphism $\bar H\colon\X''\to\I$ such that:
\begin{align*}
&V\o\bar H=H\\
&\hat\theta_H=\tau
\end{align*}
The $2$-morphism
\begin{align*}
&V\o\bar H\o\T''=H\o\T''\xrightarrow{\gamma}\T\o H=\T\o V\o\bar H
\end{align*}
becomes a morphism of $\Insert[\X''\|F,G]$, thus, by the universal property of the inserter, it induces a $2$-morphism
\begin{align*}
\bar\gamma&\colon\bar H\o\T''\Rightarrow\T\o\bar H
\end{align*}
which makes $(\bar H,\bar\gamma)\colon(\X'',\TT'')\to(\X,\TT)$ into a lax tangent morphism. Similarly, one can use the universal property of the inserter to prove that a tangent $2$-morphism $\varphi\colon(H,\gamma)\Rightarrow(H',\gamma')$ which commutes with $\tau$ and $\tau'$ induces a unique tangent $2$-morphism $\bar\varphi\colon(\bar H,\bar\gamma)\Rightarrow(\bar H',\bar\gamma')$.
\end{proof}

\subsection{Equifiers}
\label{subsection:equifiers}
In the previous section, we proved the existence of inserters in the $2$-category $\Tng(\CC)$ from lax tangent morphisms to strong tangent morphisms which preserve the fundamental tangent limits, provided the existence of these inserters in $\CC$. In this section, we discuss the existence of equifiers in $\Tng(\CC)$. First, recall that an equifier $\E\colon\varphi\to\psi$ between two $2$-morphisms $\varphi,\psi\colon F\Rightarrow G\colon\X\to\X'$ in a $2$-category $\CC$ consists of an object $\E$ together with a $1$-morphism $W\colon\E\to\X$ such that $\varphi_W=\psi_W$ and the induced functor
\begin{align*}
&\CC(\Y,W)\colon\CC(\Y,\E)\to\Equif(\Y;\varphi,\psi)
\end{align*}
is an isomorphism of categories, where $\Equif(\Y;\varphi,\psi)$ is the category whose objects are $1$-morphisms $H\colon\Y\to\X$ such that $\varphi_H=\psi_H$ and morphisms are $2$-morphisms $\xi\colon H\Rightarrow H'$.
\par In the context of categories, the equifier between two natural transformations $\varphi,\psi\colon F\Rightarrow G\colon\X\to\X'$ is the subcategory $\E$ of $\X$ of objects $M$ for which $\varphi_M=\psi_M$ and all morphisms of $\X$.
\par As for inserters, the tangent structures do not lift to the equifier of two tangent $2$-morphisms between lax tangent morphisms. However, this is the case when the target tangent morphism is strong and preserves the fundamental tangent limits.

\begin{lemma}
\label{lemma:equifiers}
Consider two parallel lax tangent morphisms $(F,\alpha),(G,\beta)\colon(\X,\TT)\to(\X',\TT')$ of a $2$-category $\CC$, two tangent $2$-morphisms $\varphi,\psi\colon(F,\alpha)\Rightarrow(G,\beta)$, and let $\Y$ be an object of $\CC$. If $(G,\beta)$ is a strong tangent morphism that preserves the fundamental tangent limits, the category $\Equif[\Y;\varphi,\psi]$ is a tangent category where the tangent bundle functor sends an object $H\colon\Y\to\X$ to $\T\o H$.
\end{lemma}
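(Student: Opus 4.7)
The plan is to present $\Equif[\Y;\varphi,\psi]$ as a full tangent subcategory of the Hom-tangent category $\CC[\Y,\X]$ equipped with the pointwise tangent structure induced by $(\X,\TT)$. The strategy parallels that of Lemma~\ref{lemma:inserters}, and the decisive leverage again comes from the strongness of $(G,\beta)$ combined with the tangent $2$-morphism compatibility enjoyed by $\varphi$ and $\psi$.

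First I would verify that the tangent bundle endofunctor of $\CC[\Y,\X]$ preserves the equifier. Given $H\colon\Y\to\X$ with $\varphi_H=\psi_H$, the compatibility condition in Definition~\ref{definition:tangent-2-morphisms} applied to $\varphi$ and $\psi$ gives
\[
\beta_H\o\varphi_{\T\o H}=\T'\varphi_H\o\alpha_H,\qquad \beta_H\o\psi_{\T\o H}=\T'\psi_H\o\alpha_H.
\]
Since $(G,\beta)$ is strong, $\beta_H$ is invertible, so $\varphi_{\T\o H}=\beta_H^{-1}\o\T'\varphi_H\o\alpha_H$ and symmetrically for $\psi_{\T\o H}$. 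From $\varphi_H=\psi_H$ it then follows that $\varphi_{\T\o H}=\psi_{\T\o H}$, i.e., $\T\o H\in\Equif[\Y;\varphi,\psi]$. Iterating this argument, or equivalently running it for the Weil-algebra indexed distributive laws $\alpha^A$ and $\beta^A$ (whose invertibility is inherited from $\beta$ via pullback-stability of isomorphisms), shows that every Weil functor $\Leung[\TT](A)\o(-)$ restricts to $\Equif[\Y;\varphi,\psi]$, and in particular so do $\T_n\o(-)$ and $\T^k\o(-)$.

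Next, because $\Equif[\Y;\varphi,\psi]$ is a \emph{full} subcategory of $\CC[\Y,\X]$, the structural natural transformations $p,z,s,l,c$ (and $n$ if negatives are present) of the pointwise tangent structure automatically restrict, provided their source and target $1$-morphisms lie in the equifier; this is guaranteed by the previous paragraph. It remains to check that the fundamental tangent limits restrict. The $n$-fold pullback of $p$ along itself and the pullback diagram witnessing the universality of the vertical lift are computed pointwise in $\CC[\Y,\X]$; by the previous paragraph every vertex of these diagrams lies in $\Equif[\Y;\varphi,\psi]$, and since the inclusion is full, the diagrams remain limits in $\Equif[\Y;\varphi,\psi]$ and are still preserved pointwise by the iterates $\T^k\o(-)$. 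This packages $\Equif[\Y;\varphi,\psi]$ into a tangent subcategory of $[\Y\|\X,\TT]$ with tangent bundle functor $H\mapsto\T\o H$, as claimed.

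The main obstacle is the first step: without strongness of $(G,\beta)$ the compatibility conditions only determine $\beta_H\o\varphi_{\T\o H}$ and $\beta_H\o\psi_{\T\o H}$, not $\varphi_{\T\o H}$ and $\psi_{\T\o H}$ themselves, and one cannot conclude that the tangent bundle endofunctor preserves the equifier. The hypothesis that $(G,\beta)$ preserves the fundamental tangent limits plays no role inside Lemma~\ref{lemma:equifiers} itself, but will be indispensable in the follow-up proposition promoting these equifiers to equifiers in $\Tng(\CC)$, mirroring the role it played in Proposition~\ref{proposition:inserters}.
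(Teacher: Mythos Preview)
Your argument follows the same route as the paper's: exhibit $\Equif[\Y;\varphi,\psi]$ as a full subcategory of the Hom-tangent category $[\Y\|\X,\TT]$, use the strongness of $(G,\beta)$ together with the tangent $2$-morphism condition to show $\T\o H$ lies in the equifier, and then let fullness transport the structural transformations and the fundamental limits. The computation $\varphi_{\T\o H}=\beta_H^{-1}\o\T'\varphi_H\o\alpha_H$ is exactly what the paper writes out.

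There is, however, a genuine gap in your claim that the preservation hypothesis on $(G,\beta)$ ``plays no role inside Lemma~\ref{lemma:equifiers} itself''. To get a tangent structure you need $\T_n\o H$ in the equifier, and your justification for this --- that the Weil-indexed law $\beta^{W_n}=\beta_n$ is invertible ``via pullback-stability of isomorphisms'' --- does not go through. The map $\beta_n\colon G\o\T_n\Rightarrow\T_n'\o G$ is obtained from the universal property of the pointwise pullback $\T_n'\o G$, but its putative inverse $\langle\beta^{-1}\o{\pi'_1}_G,\ldots,\beta^{-1}\o{\pi'_n}_G\rangle$ into $G\o\T_n$ only exists if $G\o\T_n$ itself has the pullback universal property, i.e., if $G$ preserves that fundamental tangent limit. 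Equivalently, from $\varphi_{\T\o H}=\psi_{\T\o H}$ and interchange you obtain $(G\pi_k)_H\o\varphi_{\T_n\o H}=(G\pi_k)_H\o\psi_{\T_n\o H}$ for all $k$, and cancelling requires the family $(G\pi_k)_H$ to be jointly monic --- again the preservation hypothesis. The paper invokes this hypothesis precisely at this point (``using that $G$ preserves the fundamental tangent limits, it is not hard to see that $-\o\T_n$ defines the $n$-fold pullback\ldots''), so your closing remark should be withdrawn and the hypothesis acknowledged where you form $\beta_n^{-1}$.
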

\begin{proof}
As per $\Insert[\Y;F,G]$, the category $\Equif[\Y;\varphi,\psi]$ is a subcategory of the Hom-tangent category $[\Y\|\X,\TT]$. Consider an object of $\Equif[\Y;\varphi,\psi]$, namely, a $1$-morphism $H\colon\Y\to\X$ such that $\varphi_H=\psi_H$. We want to prove that $\varphi_{\T\o H}=\psi_{\T\o H}$
\begin{align*}
&\quad\varphi_{\T\o H}\\
=&\quad(\varphi_\T)_H            &&(\varphi_\T=\beta^{-1}\o\T'\varphi\o\alpha)\\
=&\quad(\beta^{-1}\o\T'\varphi\o\alpha)_H\\
=&\quad\beta^{-1}_H\o\T'\varphi_H\o\alpha_H       &&(\varphi_H=\psi_H)\\
=&\quad\beta^{-1}_H\o\T'\psi_H\o\alpha_H\\
=&\quad(\beta^{-1}\o\T'\psi\o\alpha)_H       &&(\beta^{-1}\o\T'\psi\o\alpha=\psi_\T)\\
=&\quad(\psi_\T)_H\\
=&\quad\psi_{\T\o H}
\end{align*}
where we used that $\varphi$ and $\psi$ are tangent $2$-morphisms and that $\beta$ is invertible. Similarly, using that $G$ preserves the fundamental tangent limits, it is not hard to see that $-\o\T_n$ defines the $n$-fold pullback of the projection along itself and that the vertical lift is universal. Thus, $\Equif[\Y;\varphi,\psi]$ is a tangent subcategory of the Hom-tangent category $[\Y\|\X,\TT]$.
\end{proof}

\begin{proposition}
\label{proposition:equifiers}
Consider two parallel lax tangent morphisms $(F,\alpha),(G,\beta)\colon(\X,\TT)\to(\X',\TT')$ of a $2$-category $\CC$ and two tangent $2$-morphisms $\varphi,\psi\colon(F,\alpha)\Rightarrow(G,\beta)$. Assume also that $(G,\beta)$ is a strong tangent morphism which preserves the fundamental tangent limits. If there exists an equifier $\E\colon\varphi\to\psi$ of $\varphi$ and $\psi$ in $\CC$, then $\E$ comes equipped with a tangent structure $\TT^\E$ such that the universal $1$-morphism $W\colon\E\to\X$ becomes a strict tangent morphism that preserves the fundamental tangent limits and $(\E,\TT^\E)\colon\varphi\to\psi$ becomes an equifier of $\varphi$ and $\psi$ in $\Tng(\CC)$. In particular, if $\CC$ admits equifiers, so does $\Tng_\cong(\CC)$.
\end{proposition}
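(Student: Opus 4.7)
The plan is to mirror the proof of Proposition~\ref{proposition:inserters} for inserters, adapted to the equifier setting. First, by Lemma~\ref{lemma:equifiers}, for each object $\Y$ of $\CC$, the category $\Equif[\Y;\varphi,\psi]$ carries a tangent structure defined pointwise by postcomposition with $(\T,c)$. I would begin by showing that this assignment extends to a $2$-functor
\begin{align*}
\Equif[-;\varphi,\psi]\colon\CC^\op\to\TngCat
\end{align*}
where a $1$-morphism $K\colon\Y'\to\Y$ of $\CC$ acts by precomposition (which preserves the subcategory inclusion into $\CC[\Y,\X]$ because $\varphi_{H\o K}=(\varphi_H)_K=(\psi_H)_K=\psi_{H\o K}$) and, being a pointwise operation, strictly commutes with the tangent structures of Lemma~\ref{lemma:equifiers}.

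Next I would use the $1$-dimensional universal property of the equifier in $\CC$ to get an isomorphism of ordinary categories $\CC[-,\E]\cong\Equif[-;\varphi,\psi]$, which lifts through the forgetful $2$-functor $\U\colon\TngCat\to\Cat$ to an isomorphism of $2$-functors into $\TngCat$. By Corollary~\ref{corollary:tangentad-representable-2-functors} (the corepresentability criterion, applied in a form that promotes $\E$ via the enriched Yoneda lemma), this endows $\E$ with a tangent structure $\TT^\E$. Unpacking, the tangent bundle $1$-morphism $\T^\E\colon\E\to\E$ is obtained by applying the universal property to the object $(\T\o W,\alpha^*)\in\Equif[\E;\varphi,\psi]$, and the structural $2$-morphisms come from whiskering $p,z,s,l,c$ of $\TT$ with $W$. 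The fact that $W\o\T^\E=\T\o W$ on the nose (and similarly for the structural $2$-morphisms) says exactly that $W$ is a strict tangent morphism; preservation of the fundamental tangent limits follows because the $n$-fold pullback of the projection and the universality diagram for the vertical lift in $\Equif[\Y;\varphi,\psi]$ were constructed in Lemma~\ref{lemma:equifiers} as the restrictions of the corresponding pointwise limits in $[\Y\|\X,\TT]$.

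The remaining task is to check the $2$-dimensional universal property in $\Tng(\CC)$. Given a lax tangent morphism $(H,\gamma)\colon(\X'',\TT'')\to(\X,\TT)$ with $\varphi_H=\psi_H$ (which is automatic once the underlying equation holds, as $\varphi$ and $\psi$ are tangent $2$-morphisms with the same source and target), the universal property of $\E$ in $\CC$ produces a unique $1$-morphism $\bar H\colon\X''\to\E$ with $W\o\bar H=H$. Since $W$ is strict, the lax distributive law $\gamma\colon H\o\T''\Rightarrow\T\o H$ can be rewritten as a $2$-morphism $W\o\bar H\o\T''\Rightarrow W\o\T^\E\o\bar H$, which, by the $2$-dimensional part of the universal property of the equifier applied to the comparison $\bar H\o\T''$ and $\T^\E\o\bar H$, descends uniquely to a $2$-morphism $\bar\gamma\colon\bar H\o\T''\Rightarrow\T^\E\o\bar H$. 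The axioms of a lax distributive law for $\bar\gamma$ follow from those of $\gamma$ after whiskering with the faithful-on-$2$-cells operation $W\o-$. A parallel argument handles tangent $2$-morphisms between such lifts.

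The hard part will be the last step: verifying that the descended $2$-morphism $\bar\gamma$ genuinely satisfies the axioms of a lax distributive law (additivity plus compatibility with the vertical lift and canonical flip) rather than merely existing. The cleanest way is to observe that each axiom is an equation of $2$-morphisms into $\T^{\E,n}\o\bar H$ (or $\T^{\E,2}\o\bar H$), and by the faithfulness of $W\o-$ on $2$-cells (a consequence of the universal property of the equifier together with the strictness of $W$), it suffices to check the axioms after whiskering with $W$, where they reduce to the already-known axioms for $\gamma$ in $(\X,\TT)$. Once this is in place, the statement that $\Tng_\cong(\CC)$ admits equifiers whenever $\CC$ does follows because every strong tangent morphism trivially preserves the fundamental tangent limits (its distributive law being invertible) and the constructed $(\E,\TT^\E)$ sits in $\Tng_\cong(\CC)$ via the strict tangent morphism $W$.
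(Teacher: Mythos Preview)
Your argument follows the paper's proof essentially step for step: invoke Lemma~\ref{lemma:equifiers} for the pointwise tangent structure on each $\Equif[\Y;\varphi,\psi]$, assemble these into a corepresentable $2$-functor $\CC^\op\to\TngCat$, apply Corollary~\ref{corollary:tangentad-representable-2-functors} to extract $\TT^\E$, and then upgrade the universal property from $\CC$ to $\Tng(\CC)$ by using the $2$-dimensional part of the equifier (equivalently, that $W\o-$ is fully faithful on $2$-cells) to descend $\gamma$ to $\bar\gamma$ and to verify the distributive-law axioms. Your elaboration of that last verification is more explicit than the paper's one-line treatment and is correct.

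The one claim that does not stand is in your final sentence: invertibility of $\beta$ does not by itself imply that $(G,\beta)$ preserves the fundamental tangent limits. An isomorphism $G\T\cong\T'G$ produces a comparison map $G\T_n\to\T'_nG$, but nothing forces it to be invertible; equivalently, the joint monicity of the family $(G\pi_k)_k$ needed in Lemma~\ref{lemma:equifiers} to deduce $\varphi_{\T_nH}=\psi_{\T_nH}$ from $\varphi_{\T H}=\psi_{\T H}$ is precisely the preservation hypothesis and is not automatic for strong morphisms. The paper asserts the same ``in particular'' clause without argument, so you have not introduced a new gap relative to the paper, but the justification you offer for it is not correct.
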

\begin{proof}
Thanks to Lemma~\ref{lemma:equifiers}, for each object $\Y$ of $\CC$, the category $\Equif[\Y;\varphi,\psi]$ comes equipped with a tangent structure. This construction extends to a $2$-functor
\begin{align*}
&\Equif[-;\varphi,\psi]\colon\CC^\op\to\TngCat
\end{align*}
However, by the universal property of the equifier, $\Equif[-;\varphi,\psi]\cong\CC[-,\E]$ is corepresentable. Therefore, by Corollary~\ref{corollary:tangentad-representable-2-functors}, it is equivalent to a tangent structure $\TT^\E$ on $\E$. Now, consider a lax tangent morphism $(H,\gamma)\colon(\X'',\TT'')\to(\X,\TT)$ such that $\varphi_H=\psi_H$. By the universal property of the equifier, there exists a unique $\bar H\colon\X''\to\E$ such that $W\o\bar H=H$. Moreover, $\gamma\colon H\o\T''\Rightarrow\T\o H$ becomes a morphism of $\Equif[\X'';\varphi,\psi]$. Thus, by the universal property of the equifier, it corresponds to a $2$-morphism $\bar\gamma\colon\bar H\o\T''\to\T^\E\o\bar H$, which makes $(\bar\H,\bar\gamma)$ into a lax tangent morphism. Finally, the universal property of the equifier implies that every tangent $2$-morphism $\xi\colon(H,\gamma)\Rightarrow(H',\gamma')$ corresponds to a tangent $2$-morphism $\bar\xi\colon(\bar H,\bar\gamma)\Rightarrow(\bar H',\bar\gamma')$.
\end{proof}

\subsection{Pullbacks}
\label{subsection:pullbacks}
In various constructions, we require the existence of some $2$-pullbacks in the $2$-category $\Tng(\CC)$. In this section, we study which $2$-pullbacks in the base $2$-category $\CC$ lift to $\Tng(\CC)$.
\par Consider two strict tangent morphisms $F\colon(\X_1,\TT_1)\to(\X_0,\TT_0)$ and $G\colon(\X_2,\TT_2)\to(\X_0,\TT_0)$ and let us assume the $2$-pullback
\begin{equation*}
\begin{tikzcd}
\X & {\X_2} \\
{\X_1} & {\X_0}
\arrow["{\Pi_2}", from=1-1, to=1-2]
\arrow["{\Pi_1}"', from=1-1, to=2-1]
\arrow["\lrcorner"{anchor=center, pos=0.125}, draw=none, from=1-1, to=2-2]
\arrow["G", from=1-2, to=2-2]
\arrow["F"', from=2-1, to=2-2]
\end{tikzcd}
\end{equation*}
exists in $\CC$. Consider also the following $2$-pullback in $\Cat$:
\begin{equation}
\label{equation:base-pullback}
\begin{tikzcd}
{\End(\X_1)\times_{\CC[\X,\X_0]}\End(\X_2)} && {\End(\X_2)} \\
&& {\CC[\X,\X_2]} \\
{\End(\X_1)} & {\CC[\X,\X_1]} & {\CC[\X,\X_0]}
\arrow[from=1-1, to=1-3]
\arrow[from=1-1, to=3-1]
\arrow["\lrcorner"{anchor=center, pos=0.125}, draw=none, from=1-1, to=3-2]
\arrow["{\CC[\Pi_2,\X_2]}", from=1-3, to=2-3]
\arrow["{\CC[\X,G]}", from=2-3, to=3-3]
\arrow["{\CC[\Pi_1,\X_1]}"', from=3-1, to=3-2]
\arrow["{\CC[\X,F]}"', from=3-2, to=3-3]
\end{tikzcd}
\end{equation}
The category $\End(\X_1)\times_{\CC[\X,\X_0]}\End(\X_2)$ has per objects pairs $(A,B)$ formed by endomorphisms $A\colon\X_1\to\X_1$ and $B\colon\X_2\to\X_2$ such that the following diagram commutes:
\begin{equation*}
\begin{tikzcd}
\X && {\X_2} \\
&& {\X_2} \\
{\X_1} & {\X_1} & {\X_0}
\arrow["{\Pi_2}", from=1-1, to=1-3]
\arrow["{\Pi_1}"', from=1-1, to=3-1]
\arrow["B", from=1-3, to=2-3]
\arrow["G", from=2-3, to=3-3]
\arrow["A"', from=3-1, to=3-2]
\arrow["F"', from=3-2, to=3-3]
\end{tikzcd}
\end{equation*}
and morphisms are pairs $(\varphi,\psi)\colon(A,B)\to(A',B')$ of $2$-morphisms $\varphi\colon A\Rightarrow A'$ and $\psi\colon B\Rightarrow B'$ such that:
\begin{align*}
&F\varphi_{\Pi_1}=G\psi_{\Pi_2}\colon F\o A\o\Pi_1=G\o B\o\Pi_2\to G\o B'\o\Pi_2=F\o A'\o\Pi_1
\end{align*}
Using the universal property of the $2$-pullback of Equation~\eqref{equation:base-pullback}, we can define a functor
\begin{align*}
&\End(\X_1)\times_{\CC[\X,\X_0]}\End(\X_2)\to\End(\X)
\end{align*}
which sends each pair $(A,B)$ to $A\times_{\X_0}B\colon\X\to\X$ and each morphism $(\varphi,\psi)\colon(A,B)\to(A',B')$ to $\varphi\times_{\X_0}\psi\colon A\times_{\X_0}B\to A'\times_{\X_0}B'$.

\begin{proposition}
\label{proposition:pullbacks}
If $F\colon(\X_1,\TT_1)\to(\X_0,\TT_0)$ and $G\colon(\X_2,\TT_2)\to(\X_0,\TT_0)$ are two strict tangent morphisms which preserve the fundamental tangent limits and the $2$-pullback of Equation~\eqref{equation:base-pullback} exists in $\CC$, then the $2$-pullback $\X$ of $F$ along $G$ comes equipped with a tangent structure $\TT$ which makes the projections $\Pi_k\colon(\X,\TT)\to(\X_k,\TT_k)$ ($k=1,2$) strict tangent morphisms which preserve the fundamental tangent limits and $(\X,\TT)$ becomes the $2$-pullback of $F$ and $G$ in $\Tng(\CC)$.
\end{proposition}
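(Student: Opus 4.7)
The plan is to follow the strategy of Propositions~\ref{proposition:2-products},~\ref{proposition:inserters}, and~\ref{proposition:equifiers}: build a Leung functor $\Leung[\TT]\colon\Weil\to\End_\CC(\X)$ out of $\Leung[\TT_1]$ and $\Leung[\TT_2]$ using the pullback of Equation~\eqref{equation:base-pullback}, then derive the universal property of $(\X,\TT)$ in $\Tng(\CC)$ from that of $\X$ in $\CC$.

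Since $F$ and $G$ are \emph{strict} tangent morphisms, for every object $V\in\Weil$ we have $F\o\Leung[\TT_1](V)=\Leung[\TT_0](V)\o F$ and $G\o\Leung[\TT_2](V)=\Leung[\TT_0](V)\o G$, and analogous equations hold on morphisms of $\Weil$. Whiskering with the projections and using $F\o\Pi_1=G\o\Pi_2$, the assignment $V\mapsto(\Leung[\TT_1](V),\Leung[\TT_2](V))$ factors through a functor $\Weil\to\End(\X_1)\times_{\CC[\X,\X_0]}\End(\X_2)$, which, composed with the functor $\End(\X_1)\times_{\CC[\X,\X_0]}\End(\X_2)\to\End_\CC(\X)$ described just before the statement, yields the candidate $\Leung[\TT]$ sending $V$ to $\Leung[\TT_1](V)\times_{\X_0}\Leung[\TT_2](V)$. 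Strong monoidality of $\Leung[\TT]$ follows from that of the $\Leung[\TT_k]$ together with the universal property of the pullback, and the formulas $\Pi_k\o\Leung[\TT](V)=\Leung[\TT_k](V)\o\Pi_k$ exhibit the projections as strict tangent morphisms.

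The main obstacle is verifying that $\Leung[\TT]$ preserves the pointwise fundamental tangent limits. For any $1$-morphism $K\colon\Y\to\X$, the $2$-pullback property of $\X$ gives $\CC[\Y,\X]\cong\CC[\Y,\X_1]\times_{\CC[\Y,\X_0]}\CC[\Y,\X_2]$, under which the composite $\Weil\to\End_\CC(\X)\to\CC[\Y,\X]$ induced by $K$ decomposes into its two components along $\Pi_1\o K$ and $\Pi_2\o K$. Each component preserves the fundamental tangent limits by the tangentad axiom for $(\X_k,\TT_k)$, and the functors $\CC[\Y,F]$, $\CC[\Y,G]$ preserve them by the hypothesis that $F$ and $G$ preserve the fundamental tangent limits. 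Since limits in a pullback of categories along limit-preserving functors are computed componentwise, $\Leung[\TT]$ preserves the fundamental tangent limits pointwise; in particular, the projections $\Pi_k$ do so as well.

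Finally, the universal property in $\Tng(\CC)$ is transported from that of $\X$ in $\CC$. Given a tangentad $(\Y,\TT')$ together with a compatible pair of lax tangent morphisms $(H_1,\gamma_1)\colon(\Y,\TT')\to(\X_1,\TT_1)$ and $(H_2,\gamma_2)\colon(\Y,\TT')\to(\X_2,\TT_2)$, with $F\o H_1=G\o H_2$ and $F\gamma_1=G\gamma_2$, the universal property of $\X$ in $\CC$ produces a unique $H\colon\Y\to\X$ with $\Pi_k\o H=H_k$; the compatibility on distributive laws, whiskered appropriately, displays $(\gamma_1,\gamma_2)$ as a morphism in the base pullback and hence induces a unique $\gamma\colon H\o\T'\Rightarrow\T\o H$ with $\Pi_k\gamma=\gamma_k$. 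The tangent coherence axioms for $(H,\gamma)$ follow by pasting the corresponding axioms for the $(H_k,\gamma_k)$ and reflecting through the $2$-pullback of Equation~\eqref{equation:base-pullback}; the same technique handles tangent $2$-morphisms and completes the proof.
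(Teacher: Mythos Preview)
Your proposal is correct and follows essentially the same approach as the paper: build $\Leung[\TT]$ by factoring $\<\Leung[\TT_1],\Leung[\TT_2]\>$ through the auxiliary pullback of Equation~\eqref{equation:base-pullback}, verify the fundamental tangent limits componentwise via the isomorphism $\CC[\Y,\X]\cong\CC[\Y,\X_1]\times_{\CC[\Y,\X_0]}\CC[\Y,\X_2]$, and transport the universal property from $\CC$ to $\Tng(\CC)$. The only stylistic difference is that where the paper carries out an explicit diagram chase to produce the mediating $2$-morphism into $\T_n$, you invoke the general principle that limits in a pullback of categories along limit-preserving functors are computed componentwise; this is exactly what the paper's chase unwinds to.
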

\begin{proof}
Since $F$ and $G$ are strict tangent morphisms, $(\T_1,\T_2)$ becomes an object of $\End(\X_1)\times_{\CC[\X,\X_0]}\End(\X_2)$ since:
\begin{align*}
&F\o\T_1\o\Pi_1=\T_0\o F\o\Pi_1=\T_0\o G\o\Pi_2=G\o\T_2\o\Pi_2
\end{align*}
Moreover, since $F$ and $G$ preserves the fundamental tangent limits, so do each $((\T_1)_n,(\T_2)_n)$. Using a similar argument, $(p_1,p_2)$, $(z_1,z_2)$, $(s_1,s_2)$, $(l_1,l_2)$, and $(c_1,c_2)$ becomes morphisms of $\End(\X_1)\times_{\CC[\X,\X_0]}\End(\X_2)$. Therefore, we can define a functor:
\begin{align*}
&\Weil\xrightarrow{\<\Leung[\TT_1],\Leung[\TT_2]\>}\End(\X_1)\times_{\CC[\X,\X_0]}\End(\X_2)
\end{align*}
Moreover, we can postcompose such a functor by the functor $\End(\X_1)\times_{\CC[\X,\X_0]}\End(\X_2)\to\End(\X)$ induced by the universal property of $\X$ and obtain a functor:
\begin{align*}
\Leung[\TT]&\colon\Weil\xrightarrow{\<\Leung[\TT_1],\Leung[\TT_2]>}\End(\X_1)\times_{\CC[\X,\X_0]}\End(\X_2)\to\End(\X)
\end{align*}
It is not hard to see that this is a strong monoidal functor. To prove that this functor preserves the pointwise fundamental tangent limits, consider an endomorphism $F\colon\X\to\X$ and the following commutative diagram of $2$-morphisms:
\begin{equation*}
\begin{tikzcd}
F \\
& {\T_n} & \T \\
& \T & {\id_\X}
\arrow["{\varphi_n}", curve={height=-12pt}, from=1-1, to=2-3]
\arrow["{\varphi_1}"', curve={height=12pt}, from=1-1, to=3-2]
\arrow["{\pi_n}", from=2-2, to=2-3]
\arrow["{\pi_1}"', from=2-2, to=3-2]
\arrow["p", from=2-3, to=3-3]
\arrow["p"', from=3-2, to=3-3]
\end{tikzcd}
\end{equation*}
where $p$ and $\pi_k$ are $\Leung[\TT](p)$ and $\Leung[\TT](\pi_k)$, respectively. By postcomposing the diagram by each $\Pi_j$, ($j=1,2$) we obtain the diagram:
\begin{equation*}
\begin{tikzcd}
{\Pi_j\o F} \\
& {\Pi_j\o\T_n} &&& {\Pi_j\o \T} \\
&& {(\T_j)_n\o\Pi_j} & {\T_j\o\Pi_j} \\
&& {\T_j\o\Pi_j} & {\Pi_j} \\
& {\Pi_j\o \T} &&& {\Pi_j}
\arrow["{\Pi_j\varphi_n}", curve={height=-12pt}, from=1-1, to=2-5]
\arrow["{\Pi_j\varphi_1}"', curve={height=12pt}, from=1-1, to=5-2]
\arrow["{\Pi_j\pi_n}", from=2-2, to=2-5]
\arrow[equals, from=2-2, to=3-3]
\arrow["{\Pi_j\pi_1}"', from=2-2, to=5-2]
\arrow[equals, from=2-5, to=3-4]
\arrow["{\Pi_jp}", from=2-5, to=5-5]
\arrow["{p_n\Pi_j}", from=3-3, to=3-4]
\arrow["{\pi_1\Pi_j}"', from=3-3, to=4-3]
\arrow["\lrcorner"{anchor=center, pos=0.125}, draw=none, from=3-3, to=4-4]
\arrow["{p_j\Pi_j}", from=3-4, to=4-4]
\arrow["{p_j\Pi_j}"', from=4-3, to=4-4]
\arrow[equals, from=4-4, to=5-5]
\arrow[equals, from=5-2, to=4-3]
\arrow["{\Pi_jp}"', from=5-2, to=5-5]
\end{tikzcd}
\end{equation*}
However, the internal square is a pullback diagram, thus, there exists a unique $2$-morphism $\xi_j\colon\Pi_j\o F\Rightarrow(\T_j)_n\o\Pi_j$, for each $j=1,2$. Thus, by the universal property of the $2$-pullback of Equation~\eqref{equation:base-pullback}, there exists a unique morphism $\<\varphi_1,\varphi_n\>\colon F\Rightarrow\T_n$ such that:
\begin{align*}
&\Pi_j\<\varphi_1,\varphi_n\>=\xi_j
\end{align*}
In particular, is the unique $2$-morphism $\<\varphi_1,\varphi_n\>$ satisfying the equations
\begin{align*}
&\pi_k\o\<\varphi_1,\varphi_n\>=\varphi_k
\end{align*}
for $k=1\,n$. Thus, $\T_n$ is the $n$-fold pullback of $p$ along itself. To prove that such a pullback is pointwise, one needs to precompose the previous diagram by an arbitrary morphism $G\colon\Y\to\X$ and use that the $n$-fold pullbacks of $p_j\colon\T_j\to\id_{\X_j}$ are pointwise. Finally, using a similar strategy, one also shows that the vertical lift is universal. Thus, $(\X,\TT)$ is a tangent category. Finally, to prove that $(\X,\TT)$ is a $2$-pullback in $\Tng(\CC)$, one uses the universal property of the $2$-pullback~\eqref{equation:base-pullback} and that the projections $\Pi_j$ are strict tangent morphisms, which is a consequence of the construction of $(\X,\TT)$.
\end{proof}

\subsection{The construction of vector fields via (P)IE limits}
\label{subsection:PIE-limits-vector-fields}
So far, we discussed under what conditions the PIE limits of a $2$-category lift to the $2$-category $\Tng(\CC)$ of tangentads of $\CC$. In this section, we employ inserters and equifiers to construct vector fields for tangentads. For starters, consider a tangent category $(\X,\TT)$. Consider also the following two parallel tangent morphisms:
\begin{align*}
&\id_{(\X,\TT)},(\T,c)\colon(\X,\TT)\to(\X,\TT)
\end{align*}
Recall that the $2$-category $\Cat$ of categories admits inserters and equifiers, and notice that $(\T,c)$ is a strong tangent morphism which preserves the fundamental tangent limits. Thus, by Proposition~\ref{proposition:inserters}, the inserter $(\I,\TT^\I)\colon\id_{(\X,\TT)}\Rightarrow(\T,c)$ of $\id_{(\X,\TT)}$ and $(\T,c)$ is well-defined in $\TngCat$. Concretely, $\I$ is the category of pairs $(M,v)$, formed by an object $M$ of $(\X,\TT)$ and a morphism $v\colon M=\id_\X(M)\to\T M$. A morphism $f\colon(M,v)\to(N,u)$ is a morphism $f\colon M\to N$ of $(\X,\TT)$ which commutes with $v$ and $u$, namely, $u\o f=\T f\o v$. By unwrapping the proof of Proposition~\ref{proposition:inserters}, one can also show that the tangent bundle functor $\T^\I$ sends each $(M,v)$ to $(\T M,v_\T)$, where
\begin{align*}
&v_\T\colon\T M\xrightarrow{\T v}\T^2M\xrightarrow{c}\T^2M
\end{align*}
and each morphism $f$ to $\T f$. Finally, the structural natural transformations of $\TT^\I$ are defined as in $(\X,\TT)$. Notice that $(\I,\TT^\I)$ resembles the tangent category of vector fields of $(\X,\TT)$ with only one difference: the morphisms $v\colon M\to\T M$ need not satisfy any constraint. To impose each $v$ to be a section of the projection, namely, to satisfy the equation $p\o v=\id_M$, we need to use an equifier. Consider the two parallel tangent morphisms
\begin{align*}
&V,(\T,c)\o V\colon(\I,\TT^\I)\to(\X,\TT)
\end{align*}
where $V\colon(\I,\TT^\I)\to(\X,\TT)$ is the universal strict tangent morphism of the inserter. Consider also the two tangent $2$-morphisms
\begin{align*}
&V\xrightarrow{\hat\theta}V\o(\T,c)\xrightarrow{Vp}V&&\id_V\colon V\to V
\end{align*}
where $\hat\theta\colon V\o\id_{(\X,\TT)}\Rightarrow V\o(\T, c)$ is the universal $2$-morphism of the inserter $(\I,\TT^\I)$. Let $(\E,\TT^\E)$ be the equifier of $Vp\o\hat\theta$ and $\id_V$ in $\Tng(\CC)$. Concretely, $(\E,\TT^\E)$ is the tangent subcategory of $(\I,\TT^\I)$ spanned by those objects $(M,v)$ of $(\I,\TT^\I)$ satisfying
\begin{align*}
(Vp\o\hat\theta)_{(M,v)}=\id_{(M,v)}
\end{align*}
namely:
\begin{align*}
&(M\xrightarrow{v}\T M\xrightarrow{p_M}M)=(M\xrightarrow{\id_M}M)
\end{align*}
Thus, $(\E,\TT^\E)$ is exactly the tangent category of vector fields of $(\X,\TT)$.

\begin{theorem}
\label{theorem:PIE-limits-vector-fields}
Consider a tangentad $(\X,\TT)$ in a $2$-category $\CC$ and suppose that the inserter $\I\colon\id_\X\Rightarrow\T$ and the equifier $\E\colon Vp\o\hat\theta\Rightarrow\id_V$ exist in $\CC$. Thus, $(\X,\TT)$ admits the construction of vector fields in $\CC$ and the tangentad $\VF(\X,\TT)$ of vector fields of $(\X,\TT)$ is the equifier $(\E,\TT^\E)$.
\end{theorem}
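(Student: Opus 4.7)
The approach is to lift the explicit construction in $\Cat$ (summarized in the paragraph preceding the theorem) to an arbitrary $2$-category $\CC$ using Propositions~\ref{proposition:inserters} and~\ref{proposition:equifiers}. I would first apply Proposition~\ref{proposition:inserters} to the parallel pair $\id_{(\X,\TT)},(\T,c)\colon(\X,\TT)\to(\X,\TT)$ in $\Tng(\CC)$: the tangent morphism $(\T,c)$ is strong, because its distributive law $c$ is self-inverse ($c^2=\id_{\T^2}$), and preserves the fundamental tangent limits, because by the very axioms of a tangentad both the $n$-fold pullbacks of $p$ and the universality of the vertical lift are $\T$-limits. This produces an inserter $(\I,\TT^\I)$ in $\Tng(\CC)$ with strict universal morphism $V\colon(\I,\TT^\I)\to(\X,\TT)$ preserving the fundamental tangent limits, together with a universal tangent $2$-morphism $\hat\theta$. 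Proposition~\ref{proposition:equifiers} then applies to the pair $Vp\o\hat\theta,\id_V$ of tangent $2$-morphisms (since $V$ is strict and preserves the fundamental tangent limits), producing the equifier $(\E,\TT^\E)$ in $\Tng(\CC)$ with strict universal morphism $W\colon(\E,\TT^\E)\to(\I,\TT^\I)$.

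Next I would set $\U\=V\o W\colon(\E,\TT^\E)\to(\X,\TT)$ and $\Univ v\=\hat\theta W$. The equifier condition forces $p\o\Univ v=\id_\U$, so $\Univ v$ is a section of the projection, and the fact that $\hat\theta$ is a tangent $2$-morphism translates directly into the compatibility diagram~\eqref{diagram:beta-compatibility-vector-field-u}. Hence $\Univ v$ is a vector field on $\U$ in the Hom-tangent category $[(\E,\TT^\E)\|(\X,\TT)]$.

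To verify the universal property of Definition~\ref{definition:construction-vector-fields}, for a tangentad $(\X',\TT')$ I would chain the two universal properties: lax tangent morphisms $(\X',\TT')\to(\E,\TT^\E)$ in $\Tng(\CC)$ correspond, via Proposition~\ref{proposition:equifiers}, to lax tangent morphisms $(\bar H,\bar\gamma)\colon(\X',\TT')\to(\I,\TT^\I)$ with $(Vp\o\hat\theta)\bar H=\bar H$, which in turn correspond, via Proposition~\ref{proposition:inserters}, to pairs $((F,\alpha),u)$ consisting of a lax tangent morphism $(F,\alpha)\colon(\X',\TT')\to(\X,\TT)$ and a tangent $2$-morphism $u\colon(F,\alpha)\Rightarrow(\T,c)\o(F,\alpha)$ satisfying $pu=\id_F$. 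Such data is exactly a vector field in $\VF[\X',\TT'\|\X,\TT]$; tracing through the construction, this bijection is precisely the functor $\Gamma_{\Univ v}$, which is therefore invertible on objects and $2$-morphisms.

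The main obstacle will be verifying that this bijection is an isomorphism of \emph{tangent} categories, that is, that the tangent structure on $[\X',\TT'\|(\E,\TT^\E)]$ inherited from $\TT^\E$ via postcomposition matches the one on $\VF[\X',\TT'\|\X,\TT]$, whose tangent bundle sends $u$ to $c\o\T u$. Unwrapping Proposition~\ref{proposition:inserters}, the inherited tangent action on the inserter sends $(H,\tau)$ to $(\T\o H,\beta^{-1}\o\tau_\T\o\alpha)$; in our setting $\alpha=\id$ and $\beta=c$, with $\beta^{-1}=c$ again because $c^2=\id_{\T^2}$, so the formula specializes to $(\T\o H,c\o\tau_\T)$, which is exactly the prescription $v_\T=c\o\T v$ for the tangent bundle on vector fields. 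The two tangent structures therefore coincide under the bijection, confirming that $\Gamma_{\Univ v}$ is an isomorphism of tangent categories and that $(\E,\TT^\E)$ is the tangentad of vector fields of $(\X,\TT)$.
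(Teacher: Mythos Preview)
Your proposal is correct and follows essentially the same route as the paper: apply Proposition~\ref{proposition:inserters} to the pair $\id_{(\X,\TT)},(\T,c)$, then Proposition~\ref{proposition:equifiers} to $Vp\o\hat\theta,\id_V$, and chain the two universal properties to identify $[\X',\TT'\|(\E,\TT^\E)]$ with $\VF[\X',\TT'\|\X,\TT]$. You are in fact more explicit than the paper on two points the paper leaves implicit---the verification that $(\T,c)$ is strong and preserves the fundamental tangent limits, and the check that the inherited tangent structure on the inserter matches the formula $v_\T=c\o\T v$ for vector fields---so your argument is a slight expansion of the paper's rather than a departure from it.
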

\begin{proof}
Thanks to Propositions~\ref{proposition:inserters} and~\ref{proposition:equifiers}, $(\I,\TT^\I)$ and $(\E,\TT^\E)$ are an inserter and an equifier in $\Tng(\CC)$, respectively. Consider a tangentad $(\X',\TT')$ and let us unwrap the definition of the category $\Equif[\X',\TT';V p\o\hat\theta,\id_V]$. An object of this category is a lax tangent morphism $(F,\alpha)\colon(\X',\TT')\to(\I,\TT^\I)$ satisfying the following equation:
\begin{align*}
&(V p\o\hat\theta)_F=\id_{V\o F}
\end{align*}
A morphism of $\Equif[\X',\TT';V p\o\hat\theta,\id_V]$ is a tangent $2$-morphism $\varphi\colon(F,\alpha)\Rightarrow(G,\beta)$. By postcomposing an object $(F,\alpha)$ by $V\colon(\I,\TT^\I)\to(\X,\TT)$ we obtain a lax tangent morphism $V\o(F,\alpha)\colon(\X',\TT')\to(\X,\TT)$ equipped with a $2$-morphism:
\begin{align*}
&v\colon V\o(F,\alpha)\xrightarrow{\hat\theta_{(F,\alpha)}}(\T,c)\o V\o(F,\alpha)=\bar\T(V\o(F,\alpha))
\end{align*}
Moreover, since $(V p\o\hat\theta)_F=\id_{V\o F}$, the $2$-morphism $v\colon V\o(F,\alpha)\to\bar\T(V\o(F,\alpha))$ defines a vector field in the Hom-tangent category $[\X',\TT'\|\X,\TT]$. This construction extends to a functor
\begin{align*}
&\Equif[\X',\TT';V p\o\hat\theta,\id_V]\to\VF[\X',\TT'\|\X,\TT]
\end{align*}
Conversely, given a vector field $v\colon(H,\gamma)\to\bar\T(H,\gamma)$ in the Hom-tangent category $[\X',\TT'\|\X,\TT]$, by virtue of the universal property of the inserter, there exists a tangent morphism $(\bar H,\bar\gamma)\colon(\X',\TT')\to(\I,\TT^\I)$ such that $V\o(\bar H,\bar\gamma)=(H,\gamma)$ and $\hat\theta_{\bar H}=v$. Furthermore, since $p_H\o v=\id_{(H,\gamma)}$:
\begin{align*}
&(Vp\o\hat\theta)_{\bar H}=Vp_{\bar H}\o\hat\theta_{\bar H}=p_{V\o\bar H}\o v=p_H\o v=\id_H=(\id_V)_{\bar H}
\end{align*}
Therefore, by the universal property of the inserter, $(\bar H,\bar\gamma)$ is an object of $\Equif[\X',\TT';Vp\o\hat\theta,\id_V]$. One can show that this defines an isomorphism:
\begin{align*}
&\Equif[\X',\TT';Vp\o\hat\theta,\id_V]\cong\VF[\X',\TT'\|\X,\TT]
\end{align*}
However, by the universal property of the equifier, $\Equif[\X',\TT';Vp\o\hat\theta,\id_V]$ is also isomorphic to the Hom-tangent category $[\X',\TT'\|\E,\TT^\E]$. Thus, the equifier $(\E,\TT^\E)$ is the tangentad of vector fields of $(\X,\TT)$.
\end{proof}

When a $2$-category $\CC$ admits inserters and equifiers, by Proposition~\ref{theorem:PIE-limits-vector-fields}, we can construct vector fields for any tangentads.

\begin{corollary}
\label{corollary:PIE-limits-vector-fields}
A $2$-category $\CC$ which admits inserters and equifiers admits the construction of vector fields.
\end{corollary}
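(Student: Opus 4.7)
The plan is to derive this as an immediate corollary of Theorem~\ref{theorem:PIE-limits-vector-fields}. Fix an arbitrary tangentad $(\X,\TT)$ in $\CC$. The theorem reduces the existence of $\VF(\X,\TT)$ to the existence of two specific $2$-limits in the base $2$-category $\CC$, and I would simply check that both of these are instances of inserters and equifiers respectively, and hence exist by hypothesis.

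First, I would produce the inserter. Since $\CC$ admits inserters, the parallel pair $\id_\X,\T\colon\X\to\X$ in $\CC$ admits an inserter $\I$ with universal $1$-morphism $V\colon\I\to\X$ and universal $2$-morphism $\hat\theta\colon V\Rightarrow\T\o V$. Next, to produce the equifier, I would observe that $V p\o\hat\theta$ and $\id_V$ are a parallel pair of $2$-morphisms $V\Rightarrow V$ in $\CC$, so by the equifier hypothesis on $\CC$, their equifier $\E$ exists in $\CC$. Both hypotheses of Theorem~\ref{theorem:PIE-limits-vector-fields} are therefore satisfied.

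Invoking Theorem~\ref{theorem:PIE-limits-vector-fields} then gives that $(\X,\TT)$ admits the construction of vector fields, with $\VF(\X,\TT)$ realized as the equifier $(\E,\TT^\E)$. Since $(\X,\TT)$ was an arbitrary tangentad of $\CC$, Definition~\ref{definition:construction-vector-fields-2-category} yields that $\CC$ admits the construction of vector fields.

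There is essentially no obstacle here: all substantive work has already been carried out, namely the pointwise lifting of PIE limits along strong tangent morphisms that preserve the fundamental tangent limits (Propositions~\ref{proposition:inserters} and~\ref{proposition:equifiers}) and the identification of $\VF(\X,\TT)$ with the composite inserter-then-equifier performed in the proof of Theorem~\ref{theorem:PIE-limits-vector-fields}. The only point worth flagging is that the two $1$-morphisms used to form the inserter, namely $\id_\X$ and $\T$, are genuine $1$-morphisms of $\CC$ (not of $\Tng(\CC)$), so the assumption that $\CC$ admits inserters and equifiers is exactly what is needed — one does not need the lifted versions in $\Tng(\CC)$ a priori, as these are supplied by Propositions~\ref{proposition:inserters} and~\ref{proposition:equifiers} once the base inserter and equifier exist. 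Hence the proof is a one-line application of the theorem.
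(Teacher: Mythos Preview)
Your proposal is correct and matches the paper's approach exactly: the corollary is stated without a separate proof, as it follows immediately from Theorem~\ref{theorem:PIE-limits-vector-fields} once one observes that the required inserter and equifier exist in $\CC$ by hypothesis. Your additional remarks about the limits being taken in $\CC$ rather than $\Tng(\CC)$ are accurate and simply make explicit what the paper leaves implicit.
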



\section{Applications}
\label{subsection:examples-vector-fields}
In Section~\ref{subsection:examples-tangentads}, we listed some examples of tangentads. In this section, we compute the construction of vector fields for each of these examples.

\subsection{Tangent monads}
\label{subsubsection:vector-fields-tangent-monads}
Consider a tangent monad $(S,\alpha)$ on a tangent category $(\X,\TT)$. Since $(S,\alpha)$ is a lax tangent morphism, by Proposition~\ref{proposition:VF-functoriality}, $(S,\alpha)$ sends each vector field $(M,v)$ of $(\X,\TT)$ to a vector field $(SM,v_S)$, where $v_S$ is defined by:
\begin{align*}
&SM\xrightarrow{Sv}S\T M\xrightarrow{\alpha}\T SM
\end{align*}
Furthermore, if $f\colon(M,v)\to(M',v')$ is a morphism of vector fields, by naturality of $\alpha$, so is $Sf\colon(SM,v_S)\to(SM',v'_S)$. Since the unit $\eta$ and the multiplication $\mu$ of the monad $S$ are compatible with $\alpha$, the monad $S$ lifts to a monad $\VF(S,\alpha)$ on $\VF(\X,\TT)$ which is compatible with the tangent structure. Thus, $(\VF(\X,\TT),\VF(S,\alpha))$ defines a tangent monad.

\begin{theorem}
\label{theorem:vector-fields-tangent-monads}
The $2$-category $\Mnd$ of monads admits the construction of vector fields. In particular, the tangentad of vector fields of a tangent monad $(S,\alpha)$ on a tangent category $(\X,\TT)$ is the tangent monad $\VF(S,\alpha)$.
\end{theorem}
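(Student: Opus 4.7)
The plan is to apply Corollary~\ref{corollary:PIE-limits-vector-fields} to the $2$-category $\CC=\Mnd$ and then identify the resulting tangentad with the tangent monad $\VF(S,\alpha)$ on $\VF(\X,\TT)$ described in the paragraph preceding the statement.

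First I would check that $\Mnd$ admits inserters and equifiers. This is the standard lifting result that the forgetful $2$-functor $\Mnd\to\Cat$ creates PIE limits. For inserters: given two parallel monad morphisms $(F,\sigma),(G,\tau)\colon(\X,R)\to(\X',R')$, take the inserter $V\colon\I\to\X$ of the underlying functors in $\Cat$; the universal property of $V$, applied to the $2$-cells obtained by whiskering $\hat\theta$ with $R$ and pasting with $\sigma$ and $\tau$, produces a unique endofunctor $R^\I\colon\I\to\I$ lifting $R$ along $V$, and the unit and multiplication of $R$ lift similarly to make $R^\I$ into a monad for which $V$ is a strict monad morphism. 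The identical argument applies to equifiers. Once both kinds of limit exist in $\Mnd$, Corollary~\ref{corollary:PIE-limits-vector-fields} gives that every tangent monad admits the construction of vector fields.

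Next I would unpack this construction via Theorem~\ref{theorem:PIE-limits-vector-fields}: the tangentad of vector fields of $(\X,\TT;S,\alpha)$ in $\Mnd$ is the equifier $(\E,\TT^\E)$ of $Vp\o\hat\theta$ and $\id_V$, where $(\I,\TT^\I)\colon\id\Rightarrow(\T,c;\alpha)$ is the inserter in $\Mnd$ of the identity tangent monad morphism and the tangent bundle morphism. By the way PIE limits in $\Mnd$ are computed from $\Cat$, the underlying tangent category of $(\E,\TT^\E)$ coincides with the tangent category $\VF(\X,\TT)$ obtained by running the same construction in $\Cat$, and the lifted monad on it is the unique monad $S^\E$ on $\VF(\X,\TT)$ such that $\U\colon\VF(\X,\TT)\to(\X,\TT)$ is a strict morphism of tangent monads from $(\VF(\X,\TT), S^\E)$ to $(\X,\TT;S,\alpha)$ and such that the universal vector field $\Univ v$ is a tangent $2$-morphism in $\Mnd$. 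The monad $\VF(S,\alpha)$ described in the paragraph preceding the theorem satisfies exactly this universal property: it sends $(M,v)$ to $(SM,\alpha_M\o Sv)$ making $\U$ a strict monad morphism, and its unit and multiplication are the images under $\VF\colon\TngCat\to\TngCat$ of $\eta$ and $\mu$, which are precisely the lifts forced by the universal properties of the inserter and equifier. Hence $S^\E=\VF(S,\alpha)$.

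The main obstacle is really the bookkeeping linking the abstract lifts produced by the PIE-limit construction with the explicit formulas of $\VF(S,\alpha)$. This is resolved by observing that both are characterised by the same universal property over $\U$ together with $\Univ v$, so uniqueness does the work; the $2$-functoriality of $\VF\colon\TngCat\to\TngCat$ guarantees that the unit and multiplication match on the nose.
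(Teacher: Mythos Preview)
Your overall strategy---invoke the (P)IE-limit construction of vector fields and then identify the result with $\VF(S,\alpha)$---is the same as the paper's, but your first step contains a genuine error: the $2$-category $\Mnd$ of monads (with its usual lax monad morphisms) does \emph{not} admit all inserters and equifiers, so Corollary~\ref{corollary:PIE-limits-vector-fields} does not apply. Your sketch of the lifting argument hides the obstruction. To lift the monad $R$ to the inserter of $(F,\sigma)$ and $(G,\tau)$ you must assign to each object $(M,\theta\colon FM\to GM)$ of $\I$ a morphism $FRM\to GRM$; the pasting of $R'\theta$ with $\sigma$ and $\tau$ that you describe only yields a composable string after \emph{inverting} one of the two distributive laws, since one of them points the wrong way. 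This is precisely the phenomenon analysed for $\Tng(\CC)$ in Section~\ref{subsection:inserters}, and for lax morphisms of $2$-monad algebras in~\cite{lack:limits-lax-morphisms}.

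The paper's proof is accordingly more careful: it notes (citing Lack) that $\Mnd$ admits only those inserters $\I\colon F\Rightarrow G$ and equifiers $\E\colon\varphi\to\psi\colon F\Rightarrow G$ for which the source $1$-morphism $F$ is a strong (or strict) monad morphism, and then checks that the specific limits required by Theorem~\ref{theorem:PIE-limits-vector-fields}---the inserter $\id\Rightarrow(\T,c)$ and the equifier of $Vp\o\hat\theta$ against $\id_V$---have strict source (the identity in the first case, and the strict universal morphism $V$ in the second). Hence Theorem~\ref{theorem:PIE-limits-vector-fields} applies even though Corollary~\ref{corollary:PIE-limits-vector-fields} does not. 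Once you make this correction, your identification of the resulting tangent monad with $\VF(S,\alpha)$ is fine and matches the paper's ``by unwrapping the definitions''.
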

\begin{proof}
Even though the $2$-category $\Mnd$ does not admit all inserters and equifiers, it admits inserters $\I\colon F\Rightarrow G$ and equifiers $\E\colon\varphi\to\psi\colon F\Rightarrow G$ whose base $1$-morphism $F$ is a strong morphism of monads, namely, whose distributive law with the monad is invertible (in~\cite[Propositions~4.3 and~4.3]{lack:limits-lax-morphisms} it was proven when $F$ is strict, but the proof extends readily for strong morphisms). Luckily, the inserters and equifiers required to construct the tangentads of vector fields fit into this class. Thus, by Theorem~\ref{theorem:PIE-limits-vector-fields}, $\Mnd$ admits the construction of vector fields. By unwrapping the definitions of inserters and equifiers in $\Mnd$, one finds out that the tangentad of vector fields of a tangent monad $(S,\alpha)$ is precisely $\VF(S,\alpha)$.
\end{proof}

\subsection{Tangent fibrations}
\label{subsubsection:vector-fields-tangent-fibrations}
In this section, we compute the construction of vector fields for tangent fibrations. To begin, let us prove that the $2$-category $\Fib$ of fibrations admits PIE limits.

\begin{lemma}
\label{lemma:PIE-limits-fibrations}
The $2$-category $\Fib$ of cloven fibrations over arbitrary base categories admits PIE limits.
\end{lemma}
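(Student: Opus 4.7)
The plan is to verify that $\Fib$ admits each of the three generating classes of $2$-limits, namely $2$-products, inserters, and equifiers. For $2$-products of a family $(\X_i,\X_i';\Pi_i)$, I will take the componentwise product $(\prod_i\X_i,\prod_i\X_i';\prod_i\Pi_i)$: the product fibration inherits Cartesian lifts componentwise, the projections are strict $1$-morphisms of $\Fib$, and the universal property follows immediately from that of products in $\Cat$.

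For inserters of parallel $1$-morphisms $(F,F'),(G,G')\colon(\X,\X';\Pi)\to(\Y,\Y';\Lambda)$ of $\Fib$, I will take the inserters in $\Cat$ both downstairs, call it $\I$ (with objects $(A,\alpha\colon FA\to GA)$), and upstairs, call it $\I'$ (with objects $(E,\alpha'\colon F'E\to G'E)$); strict commutation of the $1$-morphisms with the fibrations induces a functor $\Pi^\I\colon\I'\to\I$ sending $(E,\alpha')$ to $(\Pi E,\Lambda\alpha')$. The key step is to equip $\Pi^\I$ with a cleaving: given $f\colon(A,\beta)\to(\Pi E,\Lambda\alpha')$ in $\I$ and the Cartesian lift $\varphi_f\colon f^\*E\to E$ of $f$ at $E$ in $\Pi$, I will use that $G'$ preserves Cartesian lifts, so $G'\varphi_f$ is Cartesian over $Gf$ for $\Lambda$, and then invoke its universal property to produce a unique $\delta\colon F'(f^\*E)\to G'(f^\*E)$ above $\beta$ satisfying $G'\varphi_f\o\delta=\alpha'\o F'\varphi_f$. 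This defines the Cartesian lift $(f^\*E,\delta)\to(E,\alpha')$ in $\I'$ over $f$, and the Cartesian property for $\Pi^\I$ then follows from the Cartesian property of $\varphi_f$ downstairs together with the same uniqueness argument.

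For equifiers of $2$-morphisms $(\varphi,\varphi'),(\psi,\psi')\colon(F,F')\Rightarrow(G,G')$, I will take $\E\hookrightarrow\X$ and $\E'\hookrightarrow\X'$ to be the full subcategories on which $\varphi=\psi$ and $\varphi'=\psi'$, respectively. The compatibility $\Pi\varphi'=\varphi_\Pi$ (and likewise for $\psi$) ensures $\Pi$ restricts to a functor $\E'\to\E$; naturality of $\varphi',\psi'$ combined with the Cartesian property of $G'\varphi_f$ forces $f^\*E\in\E'$ whenever $f$ lies in $\E$ and $E\in\E'$, so the restriction is a cloven subfibration. In each of the three constructions, the universal property in $\Fib$ is inherited from that in $\Cat\times\Cat$ via the forgetful $2$-functor $(\X,\X';\Pi)\mapsto(\X,\X')$, once the extra data (cleaving, strict commutation, preservation) is tracked.

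The main obstacle is the inserter step, where defining the cleaving of $\Pi^\I$ is exactly where the preservation-of-Cartesian-lifts axiom for the $1$-morphisms of $\Fib$ is used in an essential way, via the universal property of $G'\varphi_f$. The remaining verifications, namely functoriality of the cleavings, compatibility with the unit of the cleaving, and the corresponding conditions for test $2$-morphisms, are routine and mirror the corresponding checks in $\Cat\times\Cat$.
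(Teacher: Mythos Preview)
Your proposal is correct and self-contained, but it takes a different route from the paper. The paper argues abstractly: it recalls that $\Fib(\X)$ over a fixed base is pseudo-monadic over $\Cat$ (Street), invokes the general fact that PIE limits lift to pseudo-algebras of a $2$-monad (Blackwell--Kelly--Power), and then extends from the fixed-base case to $\Fib$ by noting that the relevant $2$-monad can be defined on all of $\Cat^{\mathbf{2}}$ so that $\Fib$ is again pseudo-monadic. The paper also explicitly mentions the direct computation as a second approach, calling it ``tedious but not difficult'' and leaving it to the reader; that is exactly what you have carried out.

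The trade-offs are clear. The paper's pseudo-monadicity argument is short and conceptual, but it imports nontrivial background and still requires some work to pass from fixed base to arbitrary base. Your explicit construction is longer but yields a concrete description of the PIE limits in $\Fib$---in particular, the cleaving on the inserter $\Pi^\I$ and the closure of the equifier subcategory under reindexing---which is precisely what one needs later when unwinding the tangentad of vector fields for a tangent fibration. Your identification of the essential step, namely using the Cartesian universal property of $G'\varphi_f$ to manufacture the inserted morphism $\delta$ upstairs (and, dually, to force $\varphi'_{f^\*E}=\psi'_{f^\*E}$ in the equifier case), is exactly the point where the preservation-of-Cartesian-lifts axiom on $1$-morphisms of $\Fib$ is indispensable.
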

\begin{proof}
First, recall that the $2$-category $\Fib(\X)$ of cloven fibrations over a fixed base is pseudo-monadic~\cite{street:fibrations-yoneda-lemma} and that the $2$-category of pseudo-algebras of a $2$-monad lifts PIE limits~\cite[Section~2]{blackwell:2-monad-theory}. Thus, $\Fib(\X)$ admits inserters and equifiers. As suggested by Richard Garner in an informal conversation with the author, one can extend the $2$-monad that classifies cloven fibrations over a fixed base to a $2$-monad on the entire $2$-category and prove that also $\Fib$ is pseudo-monadic. Another approach is to calculate the PIE limits directly. This is a tedious but not difficult task that we leave the reader to complete.
\end{proof}

\begin{remark}
\label{remark:PIE-limits-fibrations}
We thank Quentin Schroeder for pointing out that Street's theorem~\cite{street:fibrations-yoneda-lemma} only proves pseudo-monadicity for fibrations on a fixed base. We also thank Richard Garner and Luca Mesiti for suggesting solutions to this problem.
\end{remark}

\begin{theorem}
\label{theorem:vector-fields-tangent-fibrations}
The $2$-category $\Fib$ of fibrations admits the construction of vector fields. In particular, the tangentad of vector fields of a tangent fibration $\Pi\colon(\X',\TT')\to(\X,\TT)$ is the tangent fibration $\VF(\Pi)\colon\VF(\X',\TT')\to\VF(\X,\TT)$ which sends a vector field $(E,w)$ of $(\X',\TT')$ to $(\Pi(E),\Pi(w))$.
\end{theorem}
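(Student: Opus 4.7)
The plan is to combine Corollary~\ref{corollary:PIE-limits-vector-fields} with Lemma~\ref{lemma:PIE-limits-fibrations}. Since $\Fib$ admits PIE limits, and in particular inserters and equifiers, Corollary~\ref{corollary:PIE-limits-vector-fields} immediately yields that $\Fib$ admits the construction of vector fields. It remains to identify this construction explicitly for a tangent fibration $\Pi\colon(\X',\TT')\to(\X,\TT)$, viewed as a tangentad in $\Fib$ whose tangent bundle $1$-endomorphism is the pair $(\T,\T')\colon\Pi\to\Pi$ with canonical flip $(c,c')$.

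Following the recipe of Theorem~\ref{theorem:PIE-limits-vector-fields}, I would first form the inserter in $\Fib$ of the identity and of $(\T,\T')$ on $\Pi$. Unwinding the construction of inserters in $\Fib$ (as sketched in Lemma~\ref{lemma:PIE-limits-fibrations}) one sees that its total category consists of pairs $(E,w)$ with $w\colon E\to\T'E$, its base consists of pairs $(M,v)$ with $v\colon M\to\T M$, and the projection sends $(E,w)$ to $(\Pi E,\Pi w)$; morphisms in the total category are morphisms $g\colon E\to E'$ with $\T'g\o w=w'\o g$, and similarly in the base. Then I would form the equifier cutting the vector-field constraint $p\o v=\id$ and $p'\o w=\id$: by Proposition~\ref{proposition:equifiers} applied in $\Fib$, this produces precisely the fibration $\VF(\Pi)\colon\VF(\X',\TT')\to\VF(\X,\TT)$ from the statement, with tangent structures obtained pointwise from $\TT$ and $\TT'$ and with canonical flips controlling the lifted tangent bundle functor.

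The main point that needs actual verification is that $\VF(\Pi)$ really is an object of $\Fib$, namely, that it is a cloven fibration whose chosen cartesian lifts are preserved by the lifted tangent bundle functor. Given $f\colon(M,v)\to(N,u)$ in $\VF(\X,\TT)$ and $(E',w')$ in the fibre over $(N,u)$, choose the cartesian lift $\varphi_f\colon f^\*E'\to E'$ of $f$ provided by $\Pi$. Since $(\T,\T')$ is an endomorphism of $\Pi$ in $\Fib$, the morphism $\T'\varphi_f$ is cartesian over $\T f$, so the composite $w'\o\varphi_f\colon f^\*E'\to\T'E'$ lifts uniquely through $\T'\varphi_f$ to a morphism $w\colon f^\*E'\to\T'f^\*E'$, which one checks is a vector field using the universal property and the fact that $v$ and $u$ are vector fields. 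This gives the required cartesian lift $\varphi_f\colon(f^\*E',w)\to(E',w')$ in $\VF(\Pi)$, and the same universal argument shows that $\VF(\T,\T')$ preserves it.

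Once this is in place, functoriality in tangent morphisms of fibrations, compatibility with the tangent structure, and the universal property follow formally from the Hom-tangent category identification in Proposition~\ref{proposition:hom-tangent-categories} and from the PIE construction, so that the equifier so computed is indeed the tangentad of vector fields in $\Fib$. The main obstacle, as indicated, is the cartesian-lift verification for $\VF(\Pi)$; every other step is a bookkeeping unwrapping of inserters and equifiers in $\Fib$, aided by Lemma~\ref{lemma:PIE-limits-fibrations}.
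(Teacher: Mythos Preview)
Your approach is essentially the paper's: invoke Lemma~\ref{lemma:PIE-limits-fibrations} to get PIE limits in $\Fib$, apply the PIE-limit construction of vector fields (you cite Corollary~\ref{corollary:PIE-limits-vector-fields}, the paper the equivalent Theorem~\ref{theorem:PIE-limits-vector-fields}), and then unwrap the inserter/equifier to identify the result as $\VF(\Pi)$.

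One remark on emphasis: you describe the verification that $\VF(\Pi)$ is a cloven fibration with cartesian lifts preserved by the tangent endomorphism as ``the main obstacle.'' In the paper's logic this is free: the inserter and equifier are computed \emph{in} $\Fib$, so their output is by construction an object of $\Fib$. Your explicit cartesian-lift argument is really part of identifying the abstractly given PIE limit with the concrete functor $(E,w)\mapsto(\Pi E,\Pi w)$ named in the statement, not an independent obligation. This is a difference of framing rather than of substance; the proof is correct either way.
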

\begin{proof}
 Thanks to Lemma~\ref{lemma:PIE-limits-fibrations}, $\Fib$ admits PIE limits, thus, by Theorem~\ref{theorem:PIE-limits-vector-fields}, $\Fib$ admits the construction of vector fields. By unwrapping this construction, one finds out that the tangentad of vector fields of a tangent fibration $\Pi$ is precisely $\VF(\Pi)$.
\end{proof}

\subsection{Tangent indexed categories}
\label{subsubsection:vector-fields-tangent-categories}
\cite[Theorem~5.5]{lanfranchi:grothendieck-tangent-cats} proves that the $2$-categories $\TngFib$ and $\TngIndx$ of tangent fibrations and tangent indexed categories are equivalent via a Grothendieck-like construction. In this section, we utilize this equivalence to construct the vector fields of tangent indexed categories. First, let us prove that equivalences of tangentads induce equivalences of the tangentads of vector fields.

\begin{proposition}
\label{proposition:equivalence-vector-fields}
Let $\CC$ and $\CC'$ be two $2$-categories and suppose there is a $2$-equivalence $\Xi\colon\Tng(\CC)\simeq\Tng(\CC')$ of the $2$-categories of tangentads of $\CC$ and $\CC'$. If a tangentad $(\X,\TT)$ of $\CC$ admits the construction of vector fields and $\VF(\X,\TT)$ denotes the tangentad of vector fields of $(\X,\TT)$, so does $\Xi(\X,\TT)$ and the tangentad of vector fields of $\Xi(\X,\TT)$ is $\Xi(\VF(\X,\TT))$.
\end{proposition}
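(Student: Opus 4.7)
The idea is to transport the universal property of vector fields, as stated in Definition~\ref{definition:construction-vector-fields} and Theorem~\ref{theorem:universality-vector-fields}, along the $2$-equivalence $\Xi$. The plan is to first show that $\Xi$ induces equivalences of the Hom-tangent categories, then push the universal vector field $\Univ v$ of $(\X,\TT)$ through this equivalence to obtain a universal vector field on $\Xi(\VF(\X,\TT))$.

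First, I would verify that for any two tangentads $(\Y,\mathbb{U}), (\X,\TT)$ of $\CC$, the equivalence $\Xi$ of Hom-categories
\[
\Xi_{\Y,\X}\colon \Tng(\CC)[\Y,\mathbb{U};\X,\TT] \simeq \Tng(\CC')[\Xi(\Y,\mathbb{U});\Xi(\X,\TT)]
\]
lifts to an equivalence of tangent categories $[\Y,\mathbb{U}\|\X,\TT] \simeq [\Xi(\Y,\mathbb{U})\|\Xi(\X,\TT)]$. This is because the Hom-tangent structure of Proposition~\ref{proposition:hom-tangent-categories} is defined pointwise by postcomposition with the tangent bundle $(\T,c)$ of the target, and $\Xi$, being a $2$-functor between $2$-categories of tangentads, takes $(\T,c)$ and the structural $2$-morphisms of $\TT$ to those of $\Xi(\TT)$, and commutes with composition up to coherent isomorphism. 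Functoriality of $\VF$ on tangent categories (Lemma~\ref{lemma:VF-functoriality-tangent-cats}) then gives an induced equivalence
\[
\VF[\Y,\mathbb{U}\|\X,\TT] \simeq \VF[\Xi(\Y,\mathbb{U})\|\Xi(\X,\TT)].
\]

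Second, let $\Univ v\colon \U \Rightarrow \bar\T\U$ be the universal vector field on $\VF(\X,\TT)$, i.e., an object of $\VF[\VF(\X,\TT)\|\X,\TT]$. I would transport it through the equivalence above (applied with $(\Y,\mathbb{U}) = \VF(\X,\TT)$) to obtain a vector field $\Xi(\Univ v)$ in $\VF[\Xi(\VF(\X,\TT))\|\Xi(\X,\TT)]$ based on $\Xi(\U)\colon \Xi(\VF(\X,\TT)) \to \Xi(\X,\TT)$. I would then claim that $\Xi(\U)$ together with $\Xi(\Univ v)$ satisfies the universal property of vector fields for $\Xi(\X,\TT)$. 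For any tangentad $(\Y',\mathbb{U}')$ of $\CC'$, essential surjectivity of $\Xi$ picks $(\Y,\mathbb{U})$ with $\Xi(\Y,\mathbb{U}) \simeq (\Y',\mathbb{U}')$, and stacking the equivalences
\[
[\Y'\|\Xi(\VF(\X,\TT))] \simeq [\Y,\mathbb{U}\|\VF(\X,\TT)] \xrightarrow{\Gamma_{\Univ v}} \VF[\Y,\mathbb{U}\|\X,\TT] \simeq \VF[\Y'\|\Xi(\X,\TT)]
\]
yields the desired equivalence, which one checks coincides (up to natural isomorphism) with the functor $\Gamma_{\Xi(\Univ v)}$ obtained from $\Xi(\Univ v)$ via Lemma~\ref{lemma:induced-lax-tangent-morphism-from-vector-fields}.

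The main obstacle I anticipate is bookkeeping the compatibility of $\Xi$ with the Hom-tangent category structure: strictly speaking, Definition~\ref{definition:construction-vector-fields} asks the functor $\Gamma_{\Univ v}$ to be an isomorphism, while a $2$-equivalence only produces equivalences of Hom-categories. One should either (a) interpret the universal property up to equivalence (which is morally the correct formulation for representability along a $2$-equivalence), or (b) replace $\Xi(\VF(\X,\TT))$ by a strictly isomorphic tangentad in $\Tng(\CC')$ provided by the pseudo-inverse of $\Xi$ and use this rigidification to get a strict isomorphism $\Gamma_{\Xi(\Univ v)}$. Either way, the core of the argument is the two-step transport of the universal vector field described above, and the verification that the composite equivalence really is the functor $\Gamma_{\Xi(\Univ v)}$, which follows from tracking how the universal $2$-morphism $\Univ v$ behaves under the chain of equivalences.
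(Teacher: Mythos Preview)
Your proposal is correct and follows essentially the same approach as the paper: transport the universal vector field $\Univ v$ along the $2$-equivalence $\Xi$, observe that $\Xi$ induces equivalences $\VF[\X',\TT'\|\X,\TT]\simeq\VF[\Xi(\X',\TT')\|\Xi(\X,\TT)]$, and verify that the induced $\Gamma_{\Xi(\Univ v)}$ agrees with the image under $\Xi$ of the original $\Gamma_{\Univ v}$. The paper's proof is more terse and does not explicitly invoke essential surjectivity for arbitrary test objects, nor does it address the isomorphism-versus-equivalence subtlety you flag; your more careful bookkeeping on these points is warranted but does not change the underlying argument.
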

\begin{proof}
First, notice that a $2$-equivalence $\Xi\colon\Tng(\CC)\simeq\Tng(\CC')$ lifts to a $2$-equivalence
\begin{align*}
&\VF(\Xi)\colon\VF[\X',\TT'\|\X,\TT]\to\VF[\Xi(\X',\TT')\|\Xi(\X,\TT)]
\end{align*}
between the tangent category of vector fields in each Hom-tangent category $[\X',\TT'\|\X,\TT]$ of $\Tng(\CC)$ to the one of vector fields in the corresponding Hom-tangent category $[\Xi(\X',\TT')\|\Xi(\X,\TT)]$ of $\Tng(\CC')$. Furthermore, the functor
\begin{align*}
&\Gamma_{\VF(\Xi)(\Univ v)}\colon[\Xi(\X',\TT')\|\Xi(\VF(\X,\TT))]\to\VF[\Xi(\X',\TT')\|\Xi(\X,\TT)]
\end{align*}
coincides with the functor $\Xi(\Gamma_{\Univ v})$, thus, in particular, $\Gamma_{\VF(\Xi)(\Univ v)}$ becomes an isomorphism of tangent categories. Thus, $\Xi(\X,\TT)$ admits the construction of vector fields and $\Xi(\VF(\X,\TT))$ is the tangentad of vector fields of $\Xi(\X,\TT)$.
\end{proof}

Proposition~\ref{proposition:equivalence-vector-fields} together with the Grothendieck $2$-equivalence $\TngFib\cong\TngIndx$ allows us to compute the vector fields of tangent indexed categories. Consider a tangent indexed category $(\X,\TT;\IND,\TT')$ whose underlying indexed category $\IND\colon\X^\op\to\Cat$ sends an object $M$ of $\X$ to the category $\X^M$ and a morphism $f\colon M\to N$ to the functor $f^\*\colon\X^N\to\X^M$. Define $\VF(\X,\TT;\IND,\TT')$ as follows:
\begin{description}
\item[Base tangent category] The base tangent category is $\VF(\X,\TT)$;

\item[Indexed category] The indexed category $\VF(\IND)\colon\VF(\X,\TT)^\op\to\Cat$ sends a vector field $(M,v)$ of $(\X,\TT)$ to the category $\VF^{(M,v)}$ whose objects are pairs $(E,w)$ formed by an object $E\in\X^M$ and a morphism $w\colon E\to v^\*\T'^ME$ which satisfies the following condition
\begin{equation*}
\begin{tikzcd}
& {v^\*\T'^ME} \\
E & {(p\o v)^\*E} & {v^\*p^\*E}
\arrow["{v^\*p'^M}", from=1-2, to=2-3]
\arrow["w", from=2-1, to=1-2]
\arrow["{\IND_1}"', from=2-1, to=2-2]
\arrow["{\IND_2}"', from=2-2, to=2-3]
\end{tikzcd}
\end{equation*}
where $p\o v=\id_M$, $\IND_1\colon E\cong\id_E^\*E$ is the unitor of $\IND$, and $\IND_2\colon(f\o g)^\*E\cong g^\*f^\*E$ is the compositor of $\IND$. A morphism $\varphi\colon(E,w)\to(E',w')$ of $\VF^{(M,v)}$ is a morphism $\varphi E\to E'$ such that the following diagram commutes:
\begin{equation*}
\begin{tikzcd}
{v^\*\T'^ME} && {v^\*\T'^ME'} \\
E && {E'}
\arrow["{v^\*\T'^M\varphi}", from=1-1, to=1-3]
\arrow["w", from=2-1, to=1-1]
\arrow["\varphi"', from=2-1, to=2-3]
\arrow["{w'}"', from=2-3, to=1-3]
\end{tikzcd}
\end{equation*}
Furthermore, $\VF(\IND)$ sends a morphism $f\colon(N,u)\to(M,v)$ of vector fields to the functor
\begin{align*}
f^\*&\colon\VF^{(M,v)}\to\VF^{(N,u)}
\end{align*}
which sends a pair $(E,w)$ to the pair $(f^\*E,w_{f^\*})$, where $w_{f^\*}$ is the morphism so defined:
\begin{align*}
&w_{f^\*}\colon f^\*E\xrightarrow{f^\*w}f^\*v^\*\T'^ME=u^\*(\T f)^\*\T'^ME\xrightarrow{u^\*\xi^f}u^\*\T'^Nf^\*E
\end{align*}

\item[Indexed tangent bundle functor] The indexed tangent bundle functor $\VF(\T')$ consists of the list of functors
\begin{align*}
&\T'^{(M,v)}\colon\VF^{(M,v)}\to\VF^{(\T M,v_\T)}
\end{align*}
which send a pair $(E,w)\in\VF^{(M,v)}$ to the pair $(\T'^ME,w_{\T'^M})$ where $w_{\T'^M})$ is defined as follows:
\begin{align*}
&w_{\T'^M}\colon\T'^ME\xrightarrow{\T'^Mw}\T'^Mv^\*\T'^ME\xrightarrow{\xi^v}(\T v)^\*\T'^{\T M}\T'^ME\xrightarrow{(\T v)^\*c'^M}(\T v)^\*c^\*\T'^{\T M}\T'^ME\xrightarrow{\IND_2}(v_\T)^\*\T'^{\T M}\T'^ME
\end{align*}
Moreover, $\T'^{(M,v)}$ sends a morphism $\varphi\colon(E,w)\to(E',w')$ to $\T'^M\varphi$. The distributors of $\T'^{(M,v)}$ coincide with the distributors $\xi^f$ of $\T'^M$;

\item[Indexed natural transformations] The structural indexed natural transformations of $\VF(\IND)$ are the same as for $\IND$.
\end{description}

\begin{theorem}
\label{theorem:vector-fields-tangent-indexed-categories}
The $2$-category $\Indx$ of indexed categories admits the construction of vector fields. In particular, given a tangent indexed category $(\X,\TT;\IND,\TT')$, the tangent indexed category of vector fields of $(\X,\TT;\IND,\TT')$ is the tangent indexed category $\VF(\X,\TT;\IND,\TT')=(\VF(\X,\TT),\VF(\IND),\VF(\TT'))$.
\end{theorem}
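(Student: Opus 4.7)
The plan is to deduce this from the fibration case (Theorem~\ref{theorem:vector-fields-tangent-fibrations}) via the Grothendieck 2-equivalence $\Xi \colon \TngFib \simeq \TngIndx$ established in~\cite[Theorem~5.5]{lanfranchi:grothendieck-tangent-cats}. Since $\TngFib = \Tng(\Fib)$ and $\TngIndx = \Tng(\Indx)$, this is precisely the type of 2-equivalence to which Proposition~\ref{proposition:equivalence-vector-fields} applies. Combined with Theorem~\ref{theorem:vector-fields-tangent-fibrations}, this immediately implies that every tangent indexed category admits the construction of vector fields, and its tangentad of vector fields is obtained by applying $\Xi$ to $\VF(\Pi)$, where $\Pi$ is the tangent fibration associated to $(\X,\TT;\IND,\TT')$.

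The remaining task is to verify that this transported construction matches the explicit description $(\VF(\X,\TT), \VF(\IND), \VF(\TT'))$ of the statement. First I would recall how the Grothendieck correspondence works in the tangent setting (cf.~\cite[Theorem~5.5]{lanfranchi:grothendieck-tangent-cats}): objects of $\X'$ are pairs $(M,E)$ with $E \in \X^M$; Cartesian lifts of $f \colon N \to M$ take the form $(N, f^\* E) \to (M, E)$; and the tangent bundle functor on $\X'$ sends $(M,E)$ to $(\T M, \T'^M E)$, using the distributors $\xi^f$ to lift Cartesian morphisms.

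Under this dictionary, a vector field on $(M,E)$ is a section of the projection $(\T M, \T'^M E) \to (M, E)$. Splitting such a section into its base and fibre components yields exactly a vector field $v \colon M \to \T M$ of $(\X,\TT)$ together with a morphism $w \colon E \to v^\* \T'^M E$ satisfying the triangle with $v^\* p'^M$ and the unitor/compositor of $\IND$, which is the defining condition of an object of $\VF^{(M,v)}$. The morphisms of the fibre and the reindexing functors $f^\*$ along a morphism $f \colon (N,u) \to (M,v)$ of vector fields unfold similarly, with the distributor $\xi^f$ appearing naturally in the formula for $w_{f^\*}$. The indexed tangent bundle functor $\T'^{(M,v)}$ comes from the tangent bundle functor of $\VF(\X',\TT')$ chased through $\xi^v$, the canonical flip $c'^M$, and the compositor $\IND_2$, producing exactly the displayed formula for $w_{\T'^M}$.

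The main obstacle is this identification step, which is a tedious but routine diagram chase; each coherence is forced by the compatibility axioms of the distributors with the indexed projection, zero, sum, vertical lift, and canonical flip listed in Example~\ref{example:tangent-indexed-categories}, so no new creative input should be needed. The indexed structural natural transformations of $\VF(\IND)$ coincide with those of $\IND$ because $\VF$ acts trivially on the base 2-morphisms of $\IND$ once the fibre condition has been installed. I would package these verifications as a single lemma asserting that Grothendieck intertwines $\VF$ on the fibration side with the explicit assignment $(\X,\TT;\IND,\TT') \mapsto (\VF(\X,\TT), \VF(\IND), \VF(\TT'))$ on the indexed side, at which point the theorem follows by composition with the preceding results.
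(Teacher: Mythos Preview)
Your proposal is correct and follows essentially the same approach as the paper: apply Proposition~\ref{proposition:equivalence-vector-fields} to the Grothendieck $2$-equivalence $\TngFib\simeq\TngIndx$ together with Theorem~\ref{theorem:vector-fields-tangent-fibrations}, then unwind the correspondence to identify the resulting tangentad with the explicit description $(\VF(\X,\TT),\VF(\IND),\VF(\TT'))$. The paper's proof is terser on the identification step, merely saying ``by spelling out the details,'' whereas you sketch how the dictionary actually works; but the strategy is the same.
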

\begin{proof}
Thanks to Proposition~\ref{proposition:equivalence-vector-fields}, the Grothendieck $2$-equivalence $\Fib\simeq\Indx$ of~\cite[Theorem~5.5]{lanfranchi:grothendieck-tangent-cats} implies that $\Indx$ admits the construction of vector fields. Furthermore, $\VF(\X,\TT;\IND,\TT')$ corresponds via the Grothendieck equivalence to $\VF(\Pi_{(\IND,\TT')})$, where $\Pi_{(\IND,\TT')}$ is the tangent fibration associated to $(\IND,\TT')$. By spelling out the details, one finds out that this corresponds to the tangent indexed category $(\VF(\X,\TT),\VF(\IND),\VF(\TT'))$.
\end{proof}

\subsection{Tangent split restriction categories}
\label{subsubsection:vector-fields-tangent-split-restriction-categories}
In this section, we compute the construction of vector fields for tangent split restriction categories.
\par Consider a tangent split restriction category $(\X,\TT)$ and let $\VF(\X,\TT)$ denote the category of pairs $(M,v)$ formed by an object $M$ of $(\X,\TT)$ and a total map $v\colon M\to\T M$, namely, $\bar v=\id_M$, and whose morphisms $f\colon(M,v)\to(N,u)$ are morphisms $f\colon M\to N$ of $(\X,\TT)$ which strictly commute with $v$ and $u$, namely, $u\o f=\T f\o v$ and whose restriction idempotent $\bar f\colon M\to M$ commutes with $v$, namely, $v\o\bar f=\T\bar f\o v$. The tangent split restriction structure on $(\X,\TT)$ lifts to $\VF(\X,\TT)$. In particular, the tangent bundle functor sends each pair $(M,v)$ to $(\T M,v_\T)$, where:
\begin{align*}
&v_\T\colon\T M\xrightarrow{\T v}\T^2M\xrightarrow{c}\T^2M
\end{align*}

\begin{theorem}
\label{theorem:vector-fields-tangent-split-restriction-categories}
The $2$-category $\sRestrCat$ of split restriction categories admits the construction of vector fields. In particular, the tangentad of vector fields of a tangent split restriction category $(\X,\TT)$ is the tangent split restriction category $\VF(\X,\TT)$.
\end{theorem}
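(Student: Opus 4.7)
The plan is to mimic the strategy already deployed in Theorem~\ref{theorem:vector-fields-tangent-monads}: rather than establishing that $\sRestrCat$ admits all PIE limits, I would verify that it admits precisely the inserters and equifiers needed in the construction of Theorem~\ref{theorem:PIE-limits-vector-fields}, namely those whose base $1$-morphism is strict (and preserves the fundamental tangent limits). This reduces the theorem to applying Theorem~\ref{theorem:PIE-limits-vector-fields} and then unwrapping what the inserter-then-equifier produces in the restriction setting.

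Concretely, I would first construct the inserter $(\I,\TT^\I)\colon\id_{(\X,\TT)}\Rightarrow(\T,c)$ directly inside $\sRestrCat$. Its objects should be pairs $(M,v)$ where $v\colon M\to\T M$ is a \emph{total} map, i.e.\ $\bar v=\id_M$; its morphisms $f\colon(M,v)\to(N,u)$ should be restriction-category morphisms $f\colon M\to N$ such that $u\o f=\T f\o v$ together with the compatibility $v\o\bar f=\T\bar f\o v$ of the restriction with $v$. The restriction $\bar{(-)}$ is inherited from $\X$, and all restriction idempotents that appear split because they already split in $\X$. The tangent bundle functor sends $(M,v)$ to $(\T M,c\o\T v)$, and the structural $2$-morphisms of $\TT^\I$ are those of $\TT$; one checks they are total maps and restrict to $(\I,\TT^\I)$. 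Next, I would realize the equifier $(\E,\TT^\E)$ of $Vp\o\hat\theta$ and $\id_V$ as the full restriction subcategory of $(\I,\TT^\I)$ on those pairs satisfying $p_M\o v=\id_M$. Because the defining equation is an equation between total maps and is stable under the tangent structural $2$-morphisms, the whole restriction tangent structure restricts without obstruction, producing exactly the category $\VF(\X,\TT)$ as described in the statement.

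The main obstacle will be checking the relevant universal properties \emph{in $\sRestrCat$} rather than in $\Cat$: given a lax tangent morphism from another tangent split restriction category $(\X',\TT')$ into $(\X,\TT)$ equipped with the appropriate $2$-cell (for the inserter) or satisfying the equifying equation (for the equifier), one must prove that the unique lift supplied by the underlying universal property in $\Cat$ is automatically a restriction functor. This amounts to showing that the lifted functor preserves restriction idempotents, which follows from the identity $\overline{Ff}=F\bar f$ for restriction functors combined with the extra compatibility $v\o\bar f=\T\bar f\o v$ built into the morphisms of $(\I,\TT^\I)$. The analogous check for tangent $2$-morphisms is routine once one observes that the lifted natural transformation is componentwise total.

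With the inserter and equifier constructed inside $\sRestrCat$ and both with strict base $1$-morphisms preserving the fundamental tangent limits, Theorem~\ref{theorem:PIE-limits-vector-fields} immediately yields that $\sRestrCat$ admits the construction of vector fields and identifies the resulting tangentad with $(\E,\TT^\E)$. Spelling out the objects, morphisms, and tangent bundle of this equifier recovers the description of $\VF(\X,\TT)$ given before the statement, completing the proof.
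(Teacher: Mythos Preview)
Your proposal is correct and follows essentially the same approach as the paper: verify that the relevant inserters and equifiers exist in $\sRestrCat$, invoke Theorem~\ref{theorem:PIE-limits-vector-fields}, and unwrap the resulting construction to identify it with $\VF(\X,\TT)$. The only difference is one of scope and detail: the paper asserts (without elaboration) that $\sRestrCat$ admits \emph{all} inserters and equifiers, whereas you more cautiously construct only the specific ones required and spell out the restriction-theoretic checks; your explicit description of the inserter's objects as pairs $(M,v)$ with $v$ total, and of morphisms carrying the compatibility $v\o\bar f=\T\bar f\o v$, exactly matches the paper's description of $\VF(\X,\TT)$ given just before the theorem.
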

\begin{proof}
By direct inspection, one can show that the $2$-category $\sRestrCat$ admits inserters and equifiers. Therefore, by Theorem~\ref{theorem:PIE-limits-vector-fields}, $\sRestrCat$ admits the construction of vector fields. By unwrapping this construction, one finds out that the tangentad of vector fields of a tangent split restriction category $(\X,\TT)$ is precisely $\VF(\X,\TT)$.
\end{proof}

\subsection{Tangent restriction categories: a general approach}
\label{subsubsection:vector-fields-tangent-restriction-categories}
General tangent restriction categories are not an example of tangentads. However, thanks to Proposition~\cite[Lemma~4.36]{lanfranchi:tangentads-I}, every tangent restriction category embeds into a tangent split restriction category, which, by~\cite[Proposition~4.35]{lanfranchi:tangentads-I}, is a tangentad in the $2$-category $\sRestrCat$ of split restriction categories.
\par In this section, we introduce a general procedure for extending the formal theory of vector fields to tangent-like concepts which fail to be tangentads. Thus, we apply this general construction to the case of tangent restriction categories.

\begin{definition}
\label{definition:extension-setting}
A \textbf{pullback-extension context} consists of two $2$-categories $\CC$, and $\DD$, two $2$-functors
\begin{align*}
\Xi&\colon\DD\leftrightarrows\Tng(\CC)\colon\Inc
\end{align*}
and a natural $2$-transformation
\begin{align*}
\eta_\X&\colon\X\to\Inc(\Xi(\X))
\end{align*}
for $\X\in\DD$.
\end{definition}

Consider the functors
\begin{align*}
&\Inc\colon\Tng(\sRestrCat)\cong\TngsRestrCat\to\TngRestrCat\\
&\Xi=\Split_R\colon\TngRestrCat\to\TngsRestrCat\cong\Tng(\sRestrCat)
\end{align*}
where $\Inc$ is the inclusion of $\TngsRestrCat$ into $\TngRestrCat$ as a sub $2$-category and $\Split_R$ sends each tangent restriction category $(\X,\TT)$ into the tangent split restriction category $(\bar\X,\bar\TT)$ obtained by formally splitting the restriction idempotents of $(\X,\TT)$. Thus, $\Inc$ and $\Xi$ together with the natural $2$-transformation
\begin{align*}
&\eta_{(\X,\TT)}\colon(\X,\TT)\to(\bar\X,\bar\TT)=\Inc(\Split_R(\X,\TT))
\end{align*}
which embeds $(\x,\TT)$ to its splitting completion, form a pullback-extension context. In particular, the objects of $(\bar\X,\bar\TT)$ are pairs $(A,e)$ formed by an object $A$ of $\X$ and a restriction idempotent $e=\bar e$ of $\X$. Thus, $\varphi$ sends each $A\in(\X,\TT)$ to $(A,\id_A)$.
\par Consider a pullback-extension context $(\CC,\DD;\Xi,\Inc,\Xi;\eta)$. Assume also that $\CC$ admits the construction of vector fields. We want to extend the construction of vector fields on $\DD$, by pulling back along $\eta$. In particular, let us assume that the following $2$-pullback diagram exists in $\DD$:
\begin{equation}
\label{equation:pullback-extension-vector-fields}
\begin{tikzcd}
{\VF(\X)} & {\Inc(\VF(\Xi(\X))} \\
\X & {\Inc(\Xi(\X))}
\arrow["{\VF(\eta)}", from=1-1, to=1-2]
\arrow["{\U_\X}"', from=1-1, to=2-1]
\arrow["\lrcorner"{anchor=center, pos=0.125}, draw=none, from=1-1, to=2-2]
\arrow["{\Inc(\U_{\Xi(\X)})}", from=1-2, to=2-2]
\arrow["\eta"', from=2-1, to=2-2]
\end{tikzcd}
\end{equation}

\begin{definition}
\label{definition:vector-fields-extension}
Let $(\CC,\DD;\Inc,\Xi;\eta)$ be a pullback-extension context. An object $\X$ of $\DD$ admits the \textbf{extended construction of vector fields} (w.r.t. to the pullback-extension context) if $\Xi(\X)$ admits the construction of vector fields in $\CC$ and the $2$-pullback diagram of Equation~\eqref{equation:pullback-extension-vector-fields} exists in $\DD$. In this scenario, the \textbf{object of vector fields} (w.r.t. to the pullback-extension context) of $\X$ is the object $\VF(\X)$ of $\DD$, namely, the $2$-pullback of $\Inc(\U_{\Xi(\X)})$ along $\eta$.
\end{definition}

We have already shown that the $2$-category $\TngRestrCat$ of tangent restriction categories admits a pullback-extension context, whose base $2$-category $\CC$ is the $2$-category $\RestrCat$ of restriction categories and the natural $2$-transformation $\eta$ is the inclusion of each tangent restriction category $(\X,\TT)$ into its formal split completion $\Inc(\Split_R(\X,\TT))$. The goal is to use this context to extend the construction of vector fields to general tangent (non-necessarily split) restriction categories. Let us start by guessing what vector fields of a tangent restriction category would look like.
\par In the previous section, we showed that the tangentad of vector fields of a tangent split restriction tangent category $(\X,\TT)$ is the tangent split restriction category $\VF(\X,\TT)$ whose objects are pairs $(M,v)$ formed by an object $M$ of $(\X,\TT)$ together with a total map $v\colon M\to\T M$ which is a section of the projection, namely, $p\o v=\id_M$ and morphisms $f\colon(M,v)\to(N,u)$ are morphisms $f\colon M\to N$ of $(\X,\TT)$ between the underlying objects of $(\X,\TT)$ which are compatible with $v$ and $u$, namely, $u\o f=\T f\o v$, and whose restriction idempotent is $\bar f$. We showed that each restriction idempotent splits in $\VF(\X,\TT)$.
\par Now, consider a generic tangent restriction category $(\X,\TT)$ and let us unwrap the definition of $\VF(\Split_R(\X,\TT))$. The objects of $\VF(\Split_R(\X,\TT))$ are pairs $(M,v)$ formed by an object $M$ of $(\X,\TT)$ together with a map $v\colon M\to\T M$ of $(\X,\TT)$, which satisfies the following condition:
\begin{align*}
&\T\bar v\o v=v
\end{align*}
A morphism $f\colon(M,v)\to(M',v')$ of $\VF(\Split_R(\X,\TT))$ consists of a morphism $f\colon M\to M'$ of $(\X,\TT)$ which commutes with the vector fields and the restriction idempotents, namely:
\begin{align*}
&v'\o f=\T f\o v\\
&\bar{v'}\o f=f=f\o\bar v
\end{align*}
The tangent bundle functor sends each $(M,v)$ to $(\T M,c\o\T v)$.
\par Define $\VF(\X,\TT)$ to be the full subcategory of $\VF(\Split_R(\X,\TT))$ spanned by the objects $(M,v)$ where $v\colon M\to\T M$ is total in $(\X,\TT)$, namely, $\bar v=\id_M$.

\begin{lemma}
\label{lemma:vector-fields-tangent-restriction-categories}
Let $(\X,\TT)$ be a tangent restriction category. The subcategory $\VF(\X,\TT)$ of $\VF(\Split_R(\X,\TT))$ spanned by the objects $(M,v)$ where $v$ is total in $(\X,\TT)$ is a tangent restriction category.
\end{lemma}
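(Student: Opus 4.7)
The plan is to realise $\VF(\X,\TT)$ as a full tangent subcategory of the tangent split restriction category $\VF(\Split_R(\X,\TT))$ described just above the statement, and then to show that the restriction and tangent structures of the ambient category descend along the totality constraint $\bar v=\id_M$. Two compatibility checks drive the argument: closure of the spanning condition under restriction idempotents, and closure under the tangent bundle functor. Once these are in hand, the restriction axioms, the tangent axioms, and the restriction-pullback conditions transfer for free from $\VF(\Split_R(\X,\TT))$ to $\VF(\X,\TT)$.

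For closure under restriction, I would check that for a morphism $f\colon(M,v)\to(M',v')$ of $\VF(\X,\TT)$ the restriction idempotent $\bar f\colon M\to M$ of $(\X,\TT)$ is still a morphism of $\VF(\Split_R(\X,\TT))$ from $(M,v)$ to itself, that is, that $v\o\bar f=\T\bar f\o v$. This will follow from the chain
\begin{align*}
\T\bar f\o v=\overline{\T f}\o v=v\o\overline{\T f\o v}=v\o\overline{v'\o f}=v\o\bar f,
\end{align*}
where the first equality uses that $\T$ preserves restriction idempotents, the second is the restriction axiom $\bar g\o h=h\o\overline{g\o h}$, the third uses the vector-field compatibility $\T f\o v=v'\o f$, and the fourth uses the totality $\bar{v'}=\id_{M'}$. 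The restriction axioms for $\VF(\X,\TT)$ then come for free, since $\VF(\X,\TT)$ is a subcategory of the restriction category $\VF(\Split_R(\X,\TT))$ that we have just shown is closed under restrictions.

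For closure under the tangent bundle functor, I would note that $\T$ preserves restriction idempotents, so $\bar v=\id_M$ forces $\overline{\T v}=\T\bar v=\id_{\T M}$; since the canonical flip $c$ is total as a structural $2$-cell of $(\X,\TT)$, the composite $c\o\T v$ is total as well. Hence the ambient assignment $(M,v)\mapsto(\T M,c\o\T v)$ restricts to a tangent bundle functor on $\VF(\X,\TT)$. The projection, zero, sum, vertical lift, canonical flip, and (when present) negation of $\VF(\Split_R(\X,\TT))$ are componentwise total and therefore restrict to $\VF(\X,\TT)$, while the restriction $n$-fold pullbacks of the projection and the restriction pullback witnessing universality of the vertical lift transfer because their apex vector fields are assembled from total data and the corresponding universal properties in $\VF(\Split_R(\X,\TT))$ are already pointwise. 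The one genuinely subtle step of the proof is the restriction-axiom calculation displayed in the previous paragraph; every other verification is a routine transfer along the full inclusion $\VF(\X,\TT)\hookrightarrow\VF(\Split_R(\X,\TT))$.
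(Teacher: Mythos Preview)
Your argument is correct and more hands-on than the paper's. You verify directly that the full subcategory $\VF(\X,\TT)\hookrightarrow\VF(\Split_R(\X,\TT))$ is closed under restriction idempotents (via the displayed four-step computation, which is sound) and under the tangent bundle functor, and then inherit the remaining structure and the restriction-pullback universal properties from the ambient tangent split restriction category. The paper instead goes the other way round: it first notes that $\VF(\X,\TT)$ carries a restriction structure, then embeds its split completion $\Split_R(\VF(\X,\TT))$ into $\VF(\Split_R(\X,\TT))$ via $(M,v;e)\mapsto(M,v\o e)$, restricts the tangent split restriction structure along this embedding, and finally invokes the characterisation ``$(\X,\TT)$ is a tangent restriction category iff $\Split_R(\X,\TT)$ is a tangent split restriction category'' to descend from $\Split_R(\VF(\X,\TT))$ back to $\VF(\X,\TT)$. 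Your route avoids the detour through $\Split_R$ and the appeal to that black-box characterisation, at the cost of having to argue (as you do, a little informally) that restriction pullbacks transfer to a full sub-restriction-category once their apices are seen to land there; the paper's route trades that check for the bookkeeping of the embedding $(M,v;e)\mapsto(M,v\o e)$.
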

\begin{proof}
For starters, notice that $\VF(\X,\TT)$ inherits from $(\X,\TT)$ a restriction structure, where the restriction idempotent of $f\colon(M,v)\to(M',v')$ is $\bar f$. Consider $\Split_R(\VF(\X,\TT))$, whose objects are triples $(M,v;e)$ formed by an object $(M,v)$ of $\VF(\X,\TT)$ and an idempotent $e=\bar e$, satisfying $v\o e=\T e\o v$. A morphism $f\colon(M,v;e)\to(M',v';e')$ of $\Split_R(\VF(\X,\TT))$ is a morphism $f\colon(M,v)\to(M',v')$ of $\VF(\X,\TT)$, such that $e'\o f=f=f\o e$ and whose restriction idempotent is $\bar f$. Define the following functor
\begin{align*}
&\Split_R(\VF(\X,\TT))\to\VF(\Split_R(\X,\TT))
\end{align*}
which sends each $(M,v;e)$ to $(M,v\o e)$ and each morphism $f\colon(M,v;e)\to(M',v';e')$ to $f\colon(M,v\o e)\to(M',v'\o e')$. Thus, $\Split_R(\VF(\X,\TT))$ can be seen as a subcategory of $\VF(\Split_R(\X,\TT))$. By restricting the tangent split restriction category of $\VF(\Split_R(\X,\TT))$ to $\Split_R(\VF(\X,\TT))$ we obtain a tangent split restriction category. However, $(\X,\TT)$ is a tangent restriction category if and only if $\Split_R(\X,\TT)$ is a tangent split restriction category. This proves that $\VF(\X,\TT)$ is a tangent restriction category.
\end{proof}

We can finally prove the main theorem of this section.

\begin{theorem}
\label{theorem:vector-fields-tangent-restriction-categories}
Consider the pullback-extension context $(\RestrCat,\TngRestrCat;\Inc,\Split_R;\eta)$ of tangent restriction categories. The $2$-category $\TngRestrCat$ admits the extended construction of vector fields with respect to this pullback-extension context. Moreover, the object of vector fields of a tangent restriction category is the tangent restriction category $\VF(\X,\TT)$.
\end{theorem}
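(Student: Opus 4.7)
The plan is to apply Definition~\ref{definition:vector-fields-extension} directly. By Theorem~\ref{theorem:vector-fields-tangent-split-restriction-categories}, the $2$-category $\sRestrCat$ admits the construction of vector fields, so $\Split_R(\X,\TT)$ admits the construction of vector fields in $\sRestrCat$ and the corresponding tangentad is $\VF(\Split_R(\X,\TT))$. Hence the first condition of Definition~\ref{definition:vector-fields-extension} is satisfied automatically, and I only need to exhibit the $2$-pullback of Equation~\eqref{equation:pullback-extension-vector-fields} in $\TngRestrCat$ and identify its apex with the tangent restriction category $\VF(\X,\TT)$ from Lemma~\ref{lemma:vector-fields-tangent-restriction-categories}.

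To this end, I would first note that $\VF(\X,\TT)$ embeds as a full tangent restriction subcategory into $\VF(\Split_R(\X,\TT))$ via the identification sending $(M,v)$ to $((M,\id_M),v)$, and that the forgetful functor $\U_\X\colon\VF(\X,\TT)\to(\X,\TT)$, which sends $(M,v)$ to $M$, is a strict tangent restriction functor. Because $\eta$ sends $M$ to $(M,\id_M)$ and the embedding $\VF(\X,\TT)\hookrightarrow\Inc(\VF(\Split_R(\X,\TT)))$ sends $(M,v)$ to $((M,\id_M),v)$, the square of Equation~\eqref{equation:pullback-extension-vector-fields} commutes strictly, so $\VF(\X,\TT)$ is at least a cone over the cospan.

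The universal property of the $2$-pullback is then the heart of the argument, but it amounts to a bookkeeping check. Given a tangent restriction category $\Y$ together with tangent restriction functors $F\colon\Y\to(\X,\TT)$ and $G\colon\Y\to\Inc(\VF(\Split_R(\X,\TT)))$ satisfying $\eta\circ F=\Inc(\U_{\Split_R(\X,\TT)})\circ G$, each object $A\in\Y$ is sent by $G$ to some $((FA,\id_{FA}),v_A)$ with $v_A\colon FA\to\T FA$ a morphism of $\X$ which is total in $\Split_R(\X,\TT)$; unfolding totality along the identity idempotent, this forces $\bar{v_A}=\id_{FA}$ in $(\X,\TT)$, so $(FA,v_A)$ is actually an object of $\VF(\X,\TT)$. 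This yields the unique factorization through $\VF(\X,\TT)$ on objects, and a fully analogous argument handles $1$-cells and $2$-cells, using that $\eta$ is faithful and is bijective on hom-sets between objects of the form $(M,\id_M)$ and $(M',\id_{M'})$. The main conceptual obstacle is ensuring that the tangent restriction structure inherited by the pullback matches the one built in Lemma~\ref{lemma:vector-fields-tangent-restriction-categories}, but this was already addressed there by realizing $\Split_R(\VF(\X,\TT))$ inside $\VF(\Split_R(\X,\TT))$; hence once the set-theoretic identification of the apex is established, the tangent and restriction data coincide by construction.
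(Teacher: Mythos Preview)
Your proposal is correct and follows essentially the same approach as the paper's proof: both set up the square~\eqref{equation:pullback-extension-vector-fields} as a cone and verify the $2$-pullback universal property by showing that any cone factors through $\VF(\X,\TT)$, the key step being that the vector field $v_A$ produced by $G$ must be total in $(\X,\TT)$. Your deduction of totality---reading off that $G(A)$ lives over $(FA,\id_{FA})\in\Split_R(\X,\TT)$ and then invoking totality of vector fields in $\VF(\Split_R(\X,\TT))$---is a minor rephrasing of the paper's argument, which instead applies functoriality of $G$ to $\id_A$ and uses that identities in $\VF(\Split_R(\X,\TT))$ are the restriction idempotents $\bar v$; the content is the same.
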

\begin{proof}
Consider the following diagram:
\begin{equation*}
\begin{tikzcd}
{\VF(\X,\TT)} & {\Inc(\VF(\Split_R(\X,\TT)))} \\
{(\X,\TT)} & {\Inc(\Split_R(\X,\TT))}
\arrow["{\VF(\eta)}", from=1-1, to=1-2]
\arrow["\U"', from=1-1, to=2-1]
\arrow["{\Inc(\U_{\Split_R})}", from=1-2, to=2-2]
\arrow["\eta"', from=2-1, to=2-2]
\end{tikzcd}
\end{equation*}
where $\VF(\eta)$ sends each pair $(M,v)$ to $(M,v)$ and each morphism $f\colon(M,v)\to(M',v')$ to $f$ and $\U$ forgets the vector field, namely, $\U(M,v)=M$. Consider also another tangent restriction category $(\X',\TT')$ and two tangent restriction functors making the following diagram commute:
\begin{equation*}
\begin{tikzcd}
{(\X',\TT')} \\
& {\VF(\X,\TT)} & {\Inc(\VF(\Split_R(\X,\TT)))} \\
& {(\X,\TT)} & {\Inc(\Split_R(\X,\TT))}
\arrow["{(G,\alpha)}", curve={height=-18pt}, from=1-1, to=2-3]
\arrow["{(F,\alpha)}"', curve={height=12pt}, from=1-1, to=3-2]
\arrow["{\VF(\eta)}", from=2-2, to=2-3]
\arrow["\U"', from=2-2, to=3-2]
\arrow["{\Inc(\U_{\Split_R})}", from=2-3, to=3-3]
\arrow["\eta"', from=3-2, to=3-3]
\end{tikzcd}
\end{equation*}
In particular, let $GA=(FA,v_{FA})$ for each $A\in\X'$. By functoriality, $G(\id_A)=\id_{(FA,v_{FA})}=\bar{v_{FA}}$, where we used that the identity morphisms in $\VF(\Split_R(\X,\TT))$ are the idempotents, namely, $\id_{(M,v)}=\bar v$. However, we can also compute:
\begin{align*}
&\id_{FA}=\eta(F(\id_A))=\Inc(\U)(G(\id_A))=\bar{v_FA}
\end{align*}
Therefore, the vector field $v_{FA}\colon FA\to\T FA$ must be total in $(\X,\TT)$. Thus, we can define a functor
\begin{align*}
(H,\gamma)&\colon(\X',\TT')\to\VF(\X,\TT)
\end{align*}
which sends each $A$ to $(FA,v_{FA})$. It is not hard to see that this functor extends to a tangent morphism and that it is the unique functor which makes the following diagram commute:
\begin{equation*}
\begin{tikzcd}
{(\X',\TT')} \\
& {\VF(\X,\TT)} & {\Inc(\VF(\Split_R(\X,\TT)))} \\
& {(\X,\TT)} & {\Inc(\Split_R(\X,\TT))}
\arrow["{(H,\gamma)}", dashed, from=1-1, to=2-2]
\arrow["{(G,\alpha)}", curve={height=-18pt}, from=1-1, to=2-3]
\arrow["{(F,\alpha)}"', curve={height=12pt}, from=1-1, to=3-2]
\arrow["{\VF(\eta)}", from=2-2, to=2-3]
\arrow["\U"', from=2-2, to=3-2]
\arrow["{\Inc(\U_{\Split_R})}", from=2-3, to=3-3]
\arrow["\eta"', from=3-2, to=3-3]
\end{tikzcd}
\end{equation*}
We leave the reader to complete the proof that $\VF(\X,\TT)$ is the $2$-pullback of $\Inc(\U_{\Split_R})$ along $\eta$.
\end{proof}


\begingroup

\endgroup

\end{document}